\definecolor{teal}{RGB}{0,128,128}
\definecolor{darkpurple}{RGB}{128,0,128}
\newcommand{\comment}[1]{}
\renewcommand{\thefootnote}{\fnsymbol{footnote}}
\def \cA {{\cal A}}
\def \cB {{\cal B}}
\def \cG {{\cal G}}
\def \cH {{\cal H}}
\def \Z {\mathbb Z}
\def \cR {{\cal R}}
\def \cS {{\cal S}}
\newtheorem{theorem}{Theorem}[section]
\newtheorem{lemma}[theorem]{Lemma}
\newtheorem{corollary}[theorem]{Corollary}
\theoremstyle{definition}
\newtheorem{definition}[theorem]{Definition}
\newtheorem{example}[theorem]{Example}
\newcommand{\ghd}{\mathrm{GHD}}
\newcommand{\ghf}{\mathrm{GHF}}
\newcommand{\KS}{\mathrm{KS}}
\newcommand{\soma}{\mathrm{SOMA}}
\newcommand{\bibd}{\mathrm{BIBD}}
\newcommand{\rtd}{\mathrm{RTD}}
\newcommand{\pbd}{\mathrm{PBD}}
\newcommand{\mols}{\mathrm{MOLS}}
\newcommand{\imols}{\mathrm{IMOLS}}
\newcommand{\XX}{\mathbf{X}}
\newcommand{\vv}{\mathbf{v}}
\newcommand{\kk}{\mathbf{k}}
\renewcommand{\tt}{\mathbf{t}}
\title{%\vspace{-1.5cm} 
On generalized Howell designs with block size three}
\author{
R.~Julian~R.~Abel\footnotemark[1]
\and
Robert F.~Bailey\footnotemark[2] \footnotemark[5]
\and
Andrea C.~Burgess\footnotemark[3] \footnotemark[6]  
\and
Peter Danziger\footnotemark[4] \footnotemark[6]
\and
Eric Mendelsohn\footnotemark[4] 
}
\begin{document}
\maketitle

\footnotetext[1]{School of Mathematics and Statistics, University of New South Wales, Sydney, NSW 2052, Australia.  Email: \texttt{r.j.abel@unsw.edu.au}}
\footnotetext[2]{Division of Science (Mathematics), Grenfell Campus, Memorial University of Newfoundland, Corner Brook, NL  A2H 6P9, Canada.  Email: \texttt{rbailey@grenfell.mun.ca}}
\footnotetext[3]{Department of Mathematical Sciences, University of New Brunswick, 100 Tucker Park Rd., Saint John, NB  E2L 4L5, Canada.  Email: \texttt{andrea.burgess@unb.ca}}
\footnotetext[4]{Department of Mathematics, Ryerson University, 350 Victoria St., Toronto, ON  M5B 2K3, Canada.  Email: \texttt{danziger@ryerson.ca, emendelso@gmail.com}}
\footnotetext[5]{Supported by Vice-President (Grenfell Campus) Research Fund, Memorial University of Newfoundland.}
\footnotetext[6]{Supported by an NSERC Discovery Grant.}

\renewcommand{\thefootnote}{\arabic{footnote}}

\vspace{-.5cm}
\begin{abstract}
In this paper, we examine a class of doubly resolvable combinatorial objects. Let $t, k, \lambda, s$ and $v$ be nonnegative integers, and let $X$ be a set of $v$ symbols.  A {\em generalized Howell design}, denoted $t$-$\ghd_{k}(s,v;\lambda)$, is an $s\times s$ array, each cell of which is either empty or contains a $k$-set of symbols from $X$, called a {\em block}, such that:
(i) each symbol appears exactly once in each row and in each column (i.e.\ each row and column is a resolution of $X$);
(ii) no $t$-subset of elements from $X$ appears in more than $\lambda$ cells.
Particular instances of the parameters correspond to Howell designs, doubly resolvable balanced incomplete block designs (including Kirkman squares), doubly resolvable nearly Kirkman triple systems, and simple orthogonal multi-arrays (which themselves generalize mutually orthogonal Latin squares).  Generalized Howell designs also have connections with permutation arrays and multiply constant-weight codes.

In this paper, we concentrate on the case that $t=2$, $k=3$ and $\lambda=1$, and write $\ghd(s,v)$.  In this case, the number of empty cells in each row and column falls between 0 and $(s-1)/3$.  
Previous work has considered the existence of GHDs on either end of the spectrum, with at most 1 or at least $(s-2)/3$ empty cells in each row or column.
In the case of one empty cell, we correct some results of Wang and Du, and show that there exists a $\ghd(n+1,3n)$ if and only if $n \geq 6$, except possibly for  $n=6$. 
In the case of two empty cells, we show that there exists a $\ghd(n+2,3n)$ if and only if $n \geq 6$.  
Noting that the proportion of cells in a given row or column of a $\ghd(s,v)$ which are empty falls in the interval $[0,1/3)$, we prove that for any $\pi \in [0,5/18]$, there 
is a $\ghd(s,v)$ whose proportion of empty cells in a row or column is arbitrarily close to $\pi$.  
\end{abstract}

{\bf Keywords:} Generalized Howell designs, triple systems, doubly resolvable designs. 

{\bf MSC2010:} Primary: 05B07, 05B15; Secondary: 05B40, 94B25

%\newpage

\section{Introduction}

Combinatorial designs on square arrays have been the subject of much attention, with mutually orthogonal Latin squares being the most natural example.  Block designs with two orthogonal resolutions can also be thought of in this way, with the rows and columns of the array labelled by the resolution classes.  In this paper, we consider generalized Howell designs, which are objects that in some sense fill in the gap between these two cases.
We refer the reader to \cite{Handbook} for background on these objects and design theory in general.

\subsection{Definition and examples} \label{DefnSection}

In this paper, we examine a class of doubly resolvable designs, defined below, which generalize a number of well-known objects.  

\begin{definition}
Let $t, k, \lambda, s$ and $v$ be nonnegative integers, and let $X$ be a set of $v$ symbols.  A {\em generalized Howell design}, denoted $t$-$\ghd_{k}(s,v;\lambda)$, is an $s\times s$ array, each cell of which is either empty or contains a $k$-set of symbols from $X$, called a {\em block}, such that:
\begin{enumerate}
\item
\label{latin}
each symbol appears exactly once in each row and in each column (i.e.\ each row and column is a resolution of $X$);
\item
no $t$-subset of elements from $X$ appears in more than $\lambda$ cells.
\end{enumerate}
\end{definition}
In the case that $t=2$, $k=2$ and $\lambda=1$, a $2$-$\ghd_2(s,v;1)$ is known as a {\em Howell design} $\mathrm{H}(s,v)$.  In the literature, two different generalizations 
of Howell designs have been proposed, both of which can be incorporated into the definition above: these are due to Deza and Vanstone~\cite{DezaVanstone} (which 
corresponds to the case that $t=2$) and to Rosa~\cite{Rosa} (which corresponds to the case that $t=k$).  The objects in question have appeared under several names 
in the literature.  The term {\em generalized Howell design} appears in both~\cite{DezaVanstone} and~\cite{Rosa} in reference to the particular cases studied in these 
papers, and also in papers such as~\cite{WangDu}.  The term {\em generalized Kirkman square} or GKS  has more recently been introduced to the literature (see~\cite{DuAbelWang,Etzion}); 
in particular, in these papers, a $\mathrm{GKS}_k(v;1,\lambda;s)$ is defined to be what we have defined as a $2$-$\ghd_k(s,v;\lambda)$.  
In this paper we will concentrate on the case when $t=2$, $k=3$ and $\lambda=1$, in which case we omit these parameters in the notation and simply write $\ghd(s,v)$, or $\ghd_k(s,v)$ for more general $k$.

Two obvious necessary conditions for the existence of a non-trivial $2$-$\ghd_k(s,v;\lambda)$ are that $v\equiv 0 \pmod k$ and that $\frac{v}{k}\leq s \leq \frac{\lambda(v-1)}{k-1}$. In particular, when $k=3$, $t=2$ and $\lambda=1$, we have that $\frac{v}{3}\leq s \leq \frac{v-1}{2}$. Since a $t$-$\ghd_k(s,v;\lambda)$ contains exactly $n=\frac{v}{k}$ non-empty cells in each row and column, it can be helpful to write $t$-$\ghd_k(n+e, kn;\lambda)$ (or $\ghd(n+e, 3n)$ in the case $k=3$, $t=2$ and $\lambda=1$), where $e$ is then the number of empty cells in each row and column.

A Howell design  with $v=s+1$, i.e. an $\mathrm{H}(s,s+1)$ is called a {\em Room square}. The study of Room squares goes back to the original work of Kirkman \cite{Kirkman} in 1850, where 
he presents a Room square with side length~$7$, i.e.\ an $\mathrm{H}(7,8)$.  The name of this object, however, is in reference to T.~G.~Room~\cite{Room}, who also constructed  an $\mathrm{H}(7,8)$, and in 
addition showed that there is no $\mathrm{H}(3,4)$ or $\mathrm{H}(5,6)$.  The existence of Room squares was settled in 1975 by Mullin and Wallis \cite{Mullin}; for a survey on Room squares see \cite{blue book}. 
\begin{theorem}[Mullin and Wallis \cite{Mullin}]
\label{Room squares}
There exists a Room square of side $s$ if and only if $s$ is odd and either $s=1$ or $s \geq 7$.
\end{theorem}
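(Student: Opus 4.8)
The plan is to exploit the standard reformulation of a Room square as a pair of orthogonal one-factorizations of a complete graph. A Room square of side $s$ is precisely an $\mathrm{H}(s,s+1)$, so each of its $s$ rows partitions the symbol set $X$ (with $|X|=s+1$) into pairs; that is, each row is a perfect matching, i.e.\ a one-factor, of the complete graph $K_{s+1}$ on $X$. Since there are $s$ rows, each contributing $(s+1)/2$ pairs, and there are exactly $\binom{s+1}{2}$ pairs, each occurring in at most one cell, every pair must occur exactly once; hence the rows constitute a one-factorization of $K_{s+1}$, and likewise the columns, with the property that any row-class and any column-class meet in at most one edge. Conversely, any two such \emph{orthogonal} one-factorizations assemble into a Room square. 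From this viewpoint necessity is immediate: $K_{s+1}$ admits a one-factorization only when $s+1$ is even, so $s$ must be odd (the trivial case $s=1$ being the single cell $\{x_0,x_1\}$).

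The two small exceptions I would settle by direct analysis. Both $K_4$ and $K_6$ possess, up to isomorphism, a unique one-factorization, and a short combinatorial argument (or exhaustive check) shows that in each case no second one-factorization is orthogonal to the first; this recovers Room's original nonexistence results for $\mathrm{H}(3,4)$ and $\mathrm{H}(5,6)$.

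The heart of the theorem is the construction of a Room square for every odd $s\ge 7$, for which I would combine a direct algebraic family with recursion. The primary tool is the starter--adder method over $\Z_s$: a \emph{starter} is a partition of $\Z_s\setminus\{0\}$ into pairs whose differences exhaust the nonzero elements, and a compatible \emph{adder} shifts these pairs so that the shifts again form a transversal of the same differences; developing both cyclically and adjoining an infinity symbol yields a Room square of side $s$, the differences guaranteeing that every pair occurs and the adder guaranteeing orthogonality. Strong starters of this type can be written down whenever $s$ is an odd prime power (with at most finitely many small exceptions), giving an infinite family of admissible sides. To fill the remaining values I would invoke a multiplication theorem---Room squares of sides $r$ and $s$ combine into one of side $rs$---and, more flexibly, Room frames (holey Room squares), which make the set of admissible sides closed under the filling constructions attached to pairwise balanced designs: resolving a suitable $\pbd$ on $s+1$ points whose block sizes are all orders of one-factorizable ingredients produces a Room square of side $s$.

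The main obstacle is the endgame rather than any individual construction. One must prove that the starter--adder family, together with the product and $\pbd$/frame recursions, actually covers \emph{every} odd $s\ge 7$ with no omissions; this demands a careful number-theoretic covering argument and the explicit treatment of the smallest sides $s=7,9,11,\dots$, where the recursion has no smaller Room squares to build upon. Pinning down a finite base set by exhibiting concrete starters and adders, and then verifying that its closure under the recursive operations is exactly the set of all odd integers $\ge 7$, is the delicate bookkeeping that makes the Mullin--Wallis argument substantial.
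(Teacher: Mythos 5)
First, a point of comparison: the paper does not prove this statement at all --- Theorem~\ref{Room squares} is imported from Mullin and Wallis~\cite{Mullin} as a known result --- so your proposal must stand on its own as a proof of that theorem. The first half of your argument does stand: the reformulation of an $\mathrm{H}(s,s+1)$ as a pair of orthogonal one-factorizations of $K_{s+1}$ is correct (the count $s\cdot\frac{s+1}{2}=\binom{s+1}{2}$ forces every pair to occur exactly once, and the ``at most one common edge'' property follows because a pair lying in row $i$ and column $j$ can only sit in cell $(i,j)$), necessity of $s$ odd is immediate, and nonexistence for $s=3,5$ is a legitimate finite check.

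The genuine gap is in the existence half, which is the entire substance of the theorem. You name the correct toolkit --- starters and adders over $\Z_s$, a multiplication theorem, Room frames and $\pbd$-closure --- but you execute none of it: no starter or adder is exhibited, the product and filling constructions are asserted rather than proved, and, most importantly, the covering argument showing that the closure of a finite base set under these recursions contains \emph{every} odd $s\geq 7$ is explicitly deferred in your final paragraph as ``delicate bookkeeping.'' A proof that states what must be shown without showing it is an outline, not a proof. The parenthetical ``with at most finitely many small exceptions'' also conceals real difficulties: for instance, $\Z_9$ admits no strong starter, so side $9$ (among other small sides) requires an ad hoc construction, and it is exactly this accumulation of special cases and the verification that the recursions leave no residual gaps that constitutes the bulk of the Mullin--Wallis argument. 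As written, you have proved necessity and the two small nonexistence results, but not existence for any infinite set of sides, let alone all odd $s\geq 7$.
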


More generally, Stinson \cite{Stinson} showed existence of Howell designs with odd side $s$ and Anderson, Schellenberg and Stinson \cite{ASS} showed existence of Howell designs with even side $s$. 
We thus have the following.
\begin{theorem}[Stinson \cite{Stinson}; Anderson, Schellenberg and Stinson \cite{ASS}]
\label{HD}
There exists an $\mathrm{H}(s,2n)$ if and only if $n=0$ or $n \leq s \leq 2n-1$, $(s,2n)\not\in \{(2,4), (3,4), (5,6), (5,8)\}$.
\end{theorem}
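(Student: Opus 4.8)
The plan is to prove the two directions separately. Necessity is elementary counting; sufficiency is the substantial part, and I would split it according to the parity of the side $s$, which is the division of labour between \cite{Stinson} and \cite{ASS}.

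For necessity, observe that each row of an $\mathrm{H}(s,2n)$ is a resolution of the $2n$ symbols into pairs and so occupies exactly $n$ non-empty cells, forcing $s\ge n$. For the upper bound, fix a symbol $x$: it appears once in each of the $s$ rows, each time with a distinct partner (else a pair would repeat), and there are only $2n-1$ candidate partners, so $s\le 2n-1$. The four sporadic exclusions are then disposed of individually. Since $\mathrm{H}(s,s+1)$ is a Room square, the pairs $(3,4)$ and $(5,6)$ are the Room squares of sides $3$ and $5$ and are ruled out by Theorem~\ref{Room squares}; the remaining pairs $(2,4)$ and $(5,8)$ are eliminated by a short exhaustion/counting argument (for $(2,4)$, the two row one-factors of $K_4$ admit no orthogonal mate completing a full $2\times 2$ array).

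For sufficiency I would organise the admissible spectrum $n\le s\le 2n-1$ by its endpoints and its interior. The top endpoint $s=2n-1$ is precisely the Room square case, already settled by Theorem~\ref{Room squares}, and the bottom endpoint $s=n$ (a completely filled $n\times n$ array) can be handled by the same toolkit. To fill the interior I would deploy the two standard engines of design theory. First, \emph{direct constructions}: over a suitable group one sets up a \emph{starter--adder} pair, or more generally a rotational or frame-based scheme, to produce cyclic $\mathrm{H}(s,2n)$ for entire congruence classes of parameters. Second, \emph{recursive constructions}: from smaller Howell designs, \emph{incomplete} Howell designs (designs with holes), transversal designs and sets of MOLS, one builds larger designs by inflation and by ``filling in holes.'' Because the hypotheses of these recursions are preserved under the relevant products, a finite library of directly constructed seed designs propagates to cover all but finitely many admissible pairs.

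The main obstacle is not any single construction but the global bookkeeping needed to cover the whole two-dimensional parameter region $(s,2n)$ with no gaps. The even-side case is genuinely harder, since the starter--adder method is tuned to odd orders and a separate family of constructions (the Anderson--Schellenberg--Stinson contribution) is required; and in both parities the recursions reliably yield designs for ``most'' parameters but leave sporadic residual cases---clustered near the two endpoints and at small $n$---that must be settled by hand via tailored direct constructions or computer search. The crux is to verify that exactly the four listed pairs fail to be attainable and that every other admissible pair is realised.
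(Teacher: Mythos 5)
The paper does not prove this statement at all: Theorem~\ref{HD} is quoted as a known result, with the odd-side case due to Stinson~\cite{Stinson} and the even-side case due to Anderson, Schellenberg and Stinson~\cite{ASS}. So there is no internal proof to compare against; your proposal can only be measured against what those cited papers actually do.

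Your necessity direction is correct and essentially complete: the row count gives $s\ge n$, the distinct-partner count gives $s\le 2n-1$, the pairs $(3,4)$ and $(5,6)$ are exactly the nonexistent Room squares of sides $3$ and $5$ (Theorem~\ref{Room squares}), and your argument for $(2,4)$ works (any $2\times 2$ array whose two rows are distinct one-factors of $K_4$ has a column in which a symbol repeats). One caveat: the nonexistence of $\mathrm{H}(5,8)$ is not a ``short'' counting argument; it requires a genuine case analysis, and you cannot wave it through. The substantive gap, however, is the sufficiency direction: what you give is a research plan, not a proof. You correctly identify the architecture used in the literature --- starter--adder constructions tuned to odd side, a separate family of constructions for even side, recursive constructions via incomplete Howell designs, transversal designs and MOLS, and a finite list of sporadic cases handled directly --- but no starter is exhibited, no recursion is stated with verified hypotheses, and no bookkeeping is carried out to show that the two-parameter region $n\le s\le 2n-1$ is covered without gaps. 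Those steps are precisely where the content of \cite{Stinson} and \cite{ASS} lies, each a substantial paper in its own right. As written, existence for every admissible pair $(s,2n)$ is asserted rather than established, so the proposal cannot stand as a proof of the theorem.
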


A $2$-$\ghd_k(s,v;\lambda)$ with $s=\frac{\lambda(v-1)}{k-1}$ is equivalent to a {\em doubly resolvable balanced incomplete block design} $\bibd(v,k,\lambda)$.  Doubly resolvable designs and related objects have been studied, for example, in~\cite{OCD, Curran Vanstone, Fuji-Hara Vanstone, Fuji-Hara Vanstone TD, Vanstone AG, Lamken 09, Vanstone 80, Vanstone 82}.  In particular, Fuji-Hara and Vanstone investigated orthogonal resolutions in affine geometries, showing the existence of a doubly resolvable $\bibd(q^n,q,1)$ for prime powers $q$ and integers $n \geq 3$.  Asymptotic existence of doubly resolvable $\bibd(v,k,1)$ was shown by Rosa and Vanstone~\cite{Rosa Vanstone} for $k=3$ and by Lamken~\cite{Lamken 09} for general $k$.  

For $t=2$, $k=3$ and $\lambda=1$, a $\ghd(s,v)$ with $s = \lfloor\frac{v-1}{2}\rfloor$ corresponds to a {\em Kirkman square}, $\KS(v)$, (i.e.\ a doubly resolvable Kirkman triple system of order $v$) when $v \equiv 3 \pmod 6$ and a {\em doubly resolvable nearly Kirkman triple system}, $\mathrm{DRNKTS}(v)$, when $v\equiv 0 \pmod 6$.  A Kirkman square of order $3$ is trivial to construct.  Mathon and Vanstone~\cite{Mathon Vanstone} showed the non-existence of a $\KS(9)$ or $\KS(15)$, while the non-existence of a $\mathrm{DRNKTS}(6)$ and $\mathrm{DRNKTS}(12)$ follows from Kotzig and Rosa~\cite{KotzigRosa74}.  
For many years, the smallest known example of a $\ghd(s,v)$ with $s = \lfloor\frac{v-1}{2}\rfloor$ (other than the trivial case of $s=1$, $v=3$) was a $\mathrm{DRNKTS}(24)$, 
found by Smith in 1977~\cite{Smith77}.  However, the smallest possible example of such a GHD 
with $v\equiv 0 \pmod 6$    
is for $v=18$ and $s=8$; these were recently obtained and classified up to isomorphism in~\cite{Finland}, from which 
the following example is taken.

\begin{example} \label{FinlandExample}
The following is a $\ghd(8,18)$, or equivalently a $\mathrm{DRNKTS}(18)$.
\renewcommand{\arraystretch}{1.25}
\[
\begin{array}{|c|c|c|c|c|c|c|c|} \hline
  	&	    & agh & bkm & ejp & fir & clq & dno \\ \hline
	  &  	  & bln & aij & cho & dgq & ekr & fmp \\ \hline
acd & boq &		  &	    & gmr & hkp & fjn & eil \\ \hline
bpr & aef &	  	&	    & inq & jlo & dhm & cgk \\ \hline
fgl & dik & emq & cnr &		  &  	  & aop & bhj \\ \hline
ehn & cjm & fko & dlp &		  & 	  & bgi & aqr \\ \hline
imo & gnp & djr & fhq & akl & bce &		  & \\ \hline
jkq & hlr & cip & ego & bdf & amn & 	  & \\ \hline
\end{array}
\]
\renewcommand{\arraystretch}{1.0}
\end{example}

The existence of Kirkman squares was settled by Colbourn, Lamken, Ling and Mills in \cite{CLLM}, with 23 possible exceptions, 11 of which where solved in \cite{ALW} and one ($v=351$) was solved in~\cite{ACCLWW}.
Abel, Chan, Colbourn, Lamken, Wang and Wang \cite{ACCLWW} have determined the existence of doubly resolvable nearly Kirkman triple systems, with 64 possible exceptions. We thus have the following result.
\begin{theorem}[\cite{ACCLWW,ALW,CLLM}]
\label{Kirkman squares}
Let $v \equiv 0 \pmod{3}$ be a positive integer.  Then a $\ghd(\lfloor\frac{v-1}{2}\rfloor,v)$ exists if and only if 
either $v=3$ or $v\geq 18$, with 75 possible exceptions.  The symbol $N$ will be used to denote this set of possible 
exceptions throughout the paper.
\end{theorem}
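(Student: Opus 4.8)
The plan is to prove the two directions separately and, for the existence direction, to split according to the residue of $v$ modulo $6$: a $\ghd(\lfloor\frac{v-1}{2}\rfloor,v)$ is a Kirkman square $\KS(v)$ when $v\equiv 3\pmod 6$ and a doubly resolvable nearly Kirkman triple system when $v\equiv 0\pmod 6$, and these two families are handled by essentially disjoint sets of ingredients. The nonexistence direction is already in hand from the results cited above: the divisibility condition $v\equiv 0\pmod 3$ is forced by the block size, the trivial $\KS(3)$ handles $v=3$, and the excluded small orders $v\in\{9,15\}$ and $v\in\{6,12\}$ are ruled out by Mathon and Vanstone and by Kotzig and Rosa respectively. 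Hence all that remains is to construct the designs for every admissible $v$ outside the finite exceptional set $N$.

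For the existence direction, the main engine is a frame-based recursion. First I would introduce doubly resolvable Kirkman frames: a $\{3\}$-GDD of type $g^u$ whose blocks partition into holey parallel classes, each missing exactly one group, equipped with a second, orthogonal such resolution. The two basic operations are (i) inflation of a frame by weight $m$ using a set of mutually orthogonal Latin squares (equivalently a doubly resolvable transversal design), which turns a doubly resolvable frame of type $g^u$ into one of type $(mg)^u$, and (ii) a filling construction, which adjoins a fixed set of $w$ common points and fills each hole of a doubly resolvable frame of type $g^u$ with a small $\KS(g+w)$ (or the nearly Kirkman analogue) to yield the full doubly resolvable design. Iterating (i) and (ii) over a well-chosen family of seed frames and Latin-square ingredients produces $\ghd(\lfloor\frac{v-1}{2}\rfloor,v)$ for all sufficiently large $v$ in each residue class.

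To start the recursion one needs a stock of ingredients: small doubly resolvable frames of several types, doubly resolvable transversal designs of enough orders, and a complete supply of small Kirkman squares and doubly resolvable nearly Kirkman triple systems to use as fillers. These base objects are produced by direct methods, namely starter–adder and difference-family constructions over suitable groups together with targeted computer searches, mirroring the constructions of Colbourn, Lamken, Ling and Mills for Kirkman squares and of Abel, Chan, Colbourn, Lamken, Wang and Wang for the nearly Kirkman case. Once enough seeds and a dense enough supply of transversal designs are available, a Wilson-type argument shows that the admissible orders not covered form a finite set.

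The hard part is exactly this finite residue. The recursion reaches a given order $v$ only if some frame type $g^u$, inflation factor $m$, and point adjunction $w$ realize it with all required ingredients simultaneously present, and near the nonexistence boundary, where $s=\lfloor\frac{v-1}{2}\rfloor$ is as large as the necessary condition permits, the supply of usable frames and doubly resolvable transversal designs is thin. Each stubborn order must then be settled by an ad hoc direct construction or an exhaustive search, and the $75$ orders collected in $N$ are precisely those for which neither the general recursion applies nor such a construction has yet been found. Closing this gap is the genuine obstacle, and is what the cited papers leave open.
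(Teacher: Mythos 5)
This statement is not proved in the paper at all: it is a survey theorem assembled from the cited literature. The paper's entire ``proof'' is the bookkeeping in the surrounding paragraph: Colbourn, Lamken, Ling and Mills settled Kirkman squares ($v\equiv 3\pmod 6$) with 23 possible exceptions, of which 11 were removed in \cite{ALW} and one more ($v=351$) in \cite{ACCLWW}, leaving 11; Abel, Chan, Colbourn, Lamken, Wang and Wang settled doubly resolvable nearly Kirkman triple systems ($v\equiv 0 \pmod 6$) with 64 possible exceptions; and $11+64=75$. Your proposal instead tries to reconstruct the internal machinery of those cited papers, and while your outline is structurally faithful to how they work (doubly resolvable Kirkman frames, inflation by MOLS, filling holes with small $\KS$ or DRNKTS ingredients, starter--adder seeds, a Wilson-type asymptotic argument, ad hoc constructions near the boundary), it is an outline and not a proof: you construct no frame, verify no ingredient, and cannot produce the specific finite exception set --- in particular, the number $75$ is underivable from your argument, since it is precisely the residue of the case analyses carried out across three long papers.

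The concrete gap is your final paragraph. You write that the stubborn orders ``must be settled by an ad hoc direct construction or an exhaustive search'' and that closing this gap ``is the genuine obstacle, and is what the cited papers leave open.'' This concedes that your argument establishes existence only for ``all sufficiently large $v$'' with an unquantified and uncontrolled exceptional set, which is strictly weaker than the statement being proved: the theorem asserts existence for \emph{every} $v\geq 18$ outside a specific list $N$ of 75 orders. What the cited papers leave open is exactly that list; what your proposal leaves open is everything between ``asymptotic existence'' and that list. The nonexistence direction of your proposal (the trivial $\KS(3)$, Mathon--Vanstone for $v=9,15$, Kotzig--Rosa for $v=6,12$) is correct and matches the paper's attributions, but for the existence direction the honest conclusion is that the theorem can only be quoted, as the paper does, not re-proved in this space.
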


Recently, Du, Abel and Wang~\cite{DuAbelWang} have proved the existence of $\ghd(\frac{v-4}{2},v)$ for $v \equiv 0 \pmod{12}$ with at most 15 possible exceptions and $\ghd(\frac{v-6}{2},v)$ for $v \equiv 0 \pmod{6}$ and $v>18$ with at most 31 possible exceptions.  Two of these possible exceptions ($\ghd(9,24)$ and $\ghd(12,30)$) will be addressed later in this paper.

At the other end of the spectrum, the case when $s=n$, $v=kn$, $t=2$ and $\lambda=1$ is known as a $\soma(k,n)$.  SOMAs, or {\em simple orthogonal multi-arrays}, were introduced by Phillips and Wallis~\cite{PhillipsWallis} and have been investigated by Soicher~\cite{Soicher} and Arhin~\cite{ArhinThesis,Arhin}. We note that the existence of a $2$-$\ghd_{k}(n,kn;1)$ is guaranteed by the existence of $k$ mutually orthogonal Latin squares (MOLS) of side $n$. (For more information on Latin squares and MOLS, see~\cite[Part III]{Handbook}.)  It is well known that there exist 3 MOLS of side $n$ for every $n\neq 2,3,6,10$. Interestingly, even though the corresponding set of 3 MOLS is not known to exist (and is known not to for $n=2,3,6$), the existence of a $\ghd(6, 18)$ and $\ghd(10, 30)$ has been shown by Brickell~\cite{brickell} and Soicher~\cite{Soicher}, respectively. A $\ghd(1,3)$ exists trivially but it is easily seen that the existence of a $\ghd(2,6)$ or $\ghd(3,9)$ is impossible.  
\begin{theorem} \label{618}
\label{Latin}
There exists a $\ghd(n,3n)$ if and only if $n =1$ or $n \geq 4$.
\end{theorem}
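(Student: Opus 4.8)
```latex
The plan is to establish both directions of the equivalence.  The necessity of
the condition is easy: a $\ghd(n,3n)$ is a $\soma(3,n)$, and the
non-existence of $\ghd(2,6)$ and $\ghd(3,9)$ is noted in the text just
before the statement, while $\ghd(1,3)$ is the trivial case.  So the only
work is to rule out nothing further and to \emph{construct} a $\ghd(n,3n)$
for every $n\geq 4$.  The sufficiency splits naturally into two regimes:
the generic case, handled by mutually orthogonal Latin squares, and a short
list of small exceptional orders that must be dealt with by hand.

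For the generic case I would invoke the standard fact, cited in the
excerpt, that three MOLS of side $n$ exist for every $n\neq 2,3,6,10$.  The
first step is therefore to make precise the remark that three MOLS of side
$n$ yield a $2$-$\ghd_3(n,3n;1)$: given Latin squares $L_1,L_2,L_3$ on
symbol sets $X_1,X_2,X_3$ (three disjoint copies of $\{1,\dots,n\}$, so
$v=3n$), place in cell $(i,j)$ the block
$\{(L_1)_{ij}\in X_1,\ (L_2)_{ij}\in X_2,\ (L_3)_{ij}\in X_3\}$.  Since each
$L_\ell$ is Latin, each symbol of $X_\ell$ occurs once per row and once per
column, giving condition (i) with no empty cells ($s=n$).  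Mutual
orthogonality of any two of the squares guarantees that a given pair of
symbols from distinct parts co-occurs in at most one cell; a pair within a
single part never co-occurs in a block at all, so condition (ii) holds with
$\lambda=1$.  This disposes of every $n\geq 4$ except $n=6$ and $n=10$.

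The two remaining orders are exactly the interesting ones, and they form the
main obstacle: for $n=6$ and $n=10$ the three MOLS do not exist (and are
known not to for $n=6$), so the construction above is unavailable and one
must produce the designs by other means.  Here I would rely on the
pre-existing constructions the excerpt attributes to Brickell and to
Soicher, namely a $\ghd(6,18)$ and a $\ghd(10,30)$ respectively; citing
\cite{brickell} and \cite{Soicher} closes these last two cases.  (If one
wished to be self-contained, one could instead exhibit explicit $6\times 6$
and $10\times 10$ arrays and verify conditions (i) and (ii) directly, but
this is a finite check best delegated to a computer and to the existing
literature.)

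Putting the pieces together completes the argument: $n=1$ is trivial, $n=2$
and $n=3$ are impossible, the MOLS construction covers all $n\geq 4$ with
$n\notin\{6,10\}$, and the designs of Brickell and Soicher cover $n=6$ and
$n=10$.  Hence a $\ghd(n,3n)$ exists precisely when $n=1$ or $n\geq 4$.  I
expect no essential difficulty beyond the two sporadic orders; the only
subtlety in the MOLS step is bookkeeping the three disjoint symbol sets so
that no within-part pair is ever forced to repeat, which is automatic once
the parts are kept distinct.
```
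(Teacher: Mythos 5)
Your proposal is correct and follows essentially the same route as the paper, whose justification for this theorem is exactly the discussion preceding it: superimposing three MOLS of side $n$ (which exist for all $n \neq 2,3,6,10$) on disjoint symbol sets, invoking Brickell's $\ghd(6,18)$ and Soicher's $\ghd(10,30)$ for the two orders where three MOLS are unavailable, and noting the trivial case $n=1$ and the impossibility of $\ghd(2,6)$ and $\ghd(3,9)$. The only difference is that you spell out the MOLS superposition and the verification of conditions (i) and (ii) explicitly, which the paper merely asserts.
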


\subsection{GHDs, permutation arrays and codes}
\label{PA-code}

In~\cite{DezaVanstone}, Deza and Vanstone noted $2$-$\ghd_k(s,v;\lambda)$ are equivalent to a particular type of permutation array.
\begin{definition}
A {\em permutation array} $\mathrm{PA}(s,\lambda;v)$ on a set $S$ of $s$ symbols is an $v \times s$ array such that each row is a permutation of $S$ and any two rows agree in at most $\lambda$ columns.  A permutation array in which each pair of rows agree in exactly $\lambda$ symbols is called {\em equidistant}.  If, in any given column, each symbol appears either $0$ or $k$ times, then a permutation array is said to be {\em $k$-uniform}.  
\end{definition}
The rows of a permutation array form an error-correcting code with minimum distance $s-\lambda$.  Codes formed in this manner, called {\em permutation codes}, have more recently attracted attention due to their applications to powerline communications; see~\cite{ChuColbournDukes, ColbournKloveLing, Huczynska}.  In~\cite{DezaVanstone}, it is noted that a $2$-$\ghd_k(s,v;\lambda)$ is equivalent to a $k$-uniform $\mathrm{PA}(s,\lambda;v)$.  In the case that $s=\frac{\lambda(v-1)}{k-1}$, so that the $\ghd$ is a doubly resolvable BIBD, the permutation array is equidistant.  

There are also known connections between GHDs and other classes of codes.  A {\em doubly constant weight code} is a binary constant-weight code with length $n=n_1+n_2$ and weight $w=w_1+w_2$, where each codeword has $w_1$ 1s in the first $n_1$ positions and $w_2$ 1s in the final $n_2$ positions.  Such a code with minimum distance $d$ is referred to as a $(w_1,n_1,w_2,n_2,d)$-code.  In~\cite{Etzion}, Etzion describes connections between certain classes of designs and optimal doubly constant weight codes.  In particular, a $2$-$\ghd_k(s,v;1)$ gives a $(2,2s,k,v,2k+2)$-code.

\subsection{GHDs as generalized packing designs}

The notion of {\em generalized $t$-designs} introduced by Cameron in 2009 \cite{Cameron09}, and the broader notion of {\em generalized packing designs} introduced by Bailey and Burgess \cite{packings}, provide a common framework for studying many classes of combinatorial designs.  As described below, generalized Howell designs fit neatly into this framework.  Recently, Chee, Kiah, Zhang and Zhang~\cite{CKZZ} noted that generalized packing designs are equivalent to {\em multiply constant-weight codes}, introduced in~\cite{Chee2}, which themselves generalize doubly constant-weight codes as discussed in Section~\ref{PA-code}.  

Suppose that $v,k,t,\lambda,m$ are integers where $v \geq k \geq t \geq 1$, $\lambda \geq 1$ and $m\geq 1$.  Let $\vv = (v_1,v_2,\ldots,v_m)$ and $\kk=(k_1,k_2,\ldots,k_m)$ be $m$-tuples 
of positive integers with sum $v$ and $k$ respectively, where $k_i\leq v_i$ for all $i$, and let $\XX = (X_1,X_2,\ldots,X_m)$ be an $m$-tuple of pairwise disjoint sets, where $|X_i|=v_i$.  
We say that an $m$-tuple of {\em non-negative} integers $\tt=(t_1,t_2,\ldots,t_m)$ is {\em admissible} if $t_i\leq k_i$ for all $i$ and the entries $t_i$ sum to $t$.
Now we define a {\em $t$-$(\vv,\kk,\lambda)$ generalized packing design} to be a collection $\mathcal{P}$ of $m$-tuples of sets $(B_1,B_2,\ldots,B_m)$, where $B_i\subseteq X_i$ and $|B_i|=k_i$ for all $i$, 
with the property that for any admissible $\tt$, any $m$-tuple of sets $(T_1,T_2,\ldots,T_m)$ (where $T_i\subseteq X_i$ and $|T_i|=t_i$ for all $i$) is contained in at 
most $\lambda$ members of $\mathcal{P}$.  We say a generalized packing is {\em optimal} if it contains the largest possible number of blocks. (See~\cite{packings} for further details, and for numerous examples.)

The connection with GHDs is the following: any $2$-$\ghd_k(s,n)$ forms an optimal $2$-$(\vv,\kk,1)$ generalized packing, where $\vv=(n,s,s)$ and $\kk=(k,1,1)$.  The ``point set'' of the 
generalized packing is formed from the points of the GHD, together with the row labels and the column labels.  Since $t=2$, the only possible admissible 
triples $\tt$ are $(2,0,0)$, $(1,0,1)$, $(0,1,0)$ and $(0,1,1)$.  The first of these tells us that no pair of points may occur in more than one block of the GHD; the second and third tell us 
that no point may be repeated in any row or any column; the last tells us that any entry of the $s\times s$ array may contain only one block of the GHD.

In fact, GHDs may be used to obtain $2$-$(\vv,\kk,1)$ generalized packings for $\kk=(k,1,1)$ more generally, for more arbitrary $\vv$.  If $\vv=(n,r,s)$, we may obtain a rectangular array 
by (for instance) deleting rows or adding empty columns.  Depending on the value of $n$ relative to $r,s$, we may need to delete points and/or blocks.  This idea is discussed 
further in~\cite[Section 3.5]{packings}.

\section{Terminology}

\subsection{Designs and resolutions}

In this section, we discuss various useful classes of combinatorial designs.  For more information on these and related objects, see~\cite{Handbook}.

We say that $(X, \cB)$ is a {\em pairwise balanced design} PBD$(v, K,\lambda)$ if $X$ is a set of $v$ elements and $\cB$ is a collection of subsets of $X$, called {\em blocks}, which between them contain every pair of elements of $X$ exactly $\lambda$ times.  If $K=\{k\}$, then a $\mathrm{PBD}(v,\{k\},\lambda)$ is referred to as a {\em balanced incomplete block design}, $\bibd(v,k,\lambda)$.  A collection of blocks that between them contain each point of $X$ exactly once is called a {\em resolution class} of $X$. If $\cB$ can be partitioned into resolution classes we say that the design is {\em resolvable}, and refer to the partition as a {\em resolution}.  

It is possible that a design may admit two resolutions, $\cR$ and $\cS$. If $|R_i\cap S_j| \leq 1$, for every resolution class $R_i\in \cR$ and $S_j\in \cS$, we say that these resolutions are {\em orthogonal}.  A design admitting a pair of orthogonal resolutions is called {\em doubly resolvable}.  

In the definition of a PBD or BIBD, if we relax the constraint that every pair appears exactly once to that every pair appears at most once, then we have a {\em packing} design.  Thus, a $2$-$\ghd_k(s,v;\lambda)$ may be viewed as a doubly resolvable packing design.  

A {\em group-divisible design}, $(k,\lambda)$-GDD of type $g^u$, is a triple $(X,\cG, \cB)$, where $X$ is a set of $gu$ points, $\cG$ is a partition of $X$ into $u$ subsets, called {\em groups}, of size $g$, and $\cB$ is a collection of subsets, called {\em blocks}, of size $k$, with the properties that no pair of points in the same group appears together in a block, and each pair of points in different groups occurs in exactly one block.  A $(k,1)$-GDD of type $n^k$ is called a {\em transversal design}, and denoted $\mathrm{T}(k,n)$.
Note that a transversal design can be interpreted as a packing which misses the pairs contained in the groups.  Alternatively, adding the groups as blocks, we may form from a transversal design a $\mathrm{PBD}(kn,\{k,n\})$.  PBDs formed in this way will often be used as ingredients in the construction of GHDs in later sections.

A resolvable transversal design with block size $k$ and group size $n$ is denoted $\mathrm{RT}(k,n)$.  It is well known that a set of $k$ MOLS of side $n$ is equivalent to an $\mathrm{RT}(k+1,n)$. 

\subsection{Subdesigns and holes}

The word {\em hole} is used with different meanings in different areas of design theory.  For example, a hole in a BIBD refers to a set $H$ of points such that no pair of elements of $H$ appears in the blocks, while a hole in a Latin square means an empty subsquare, so that certain rows and columns do not contain all symbols.  In the case of GHDs, three types of ``hole'' may occur, and each will be of importance later in the paper.  We thus develop appropriate terminology to differentiate the uses of the term ``hole''.

First, a {\em pairwise hole} in a GHD is a set of points $H \subseteq X$ with the property that no pair of points of $H$ appear in a block of the GHD.  Thus, a pairwise hole corresponds with the notion of a hole in a BIBD.  
  
\begin{definition}
A {\em $\ghd_k^*(s,v)$}, $\cal{G}$,   is a $2$-$\ghd_k(s,v;1)$ with a pairwise hole of size $v-s(k-1)$.  Thus the symbol set $X$ of a $\ghd_k^*(s,v)$ can be written as 
\[
X=Y \cup (S \times \{1,2,\ldots,k-1\}),
\]
where $|S|=s$, $|Y|=v-s(k-1)$, and $Y$ is a pairwise hole of $\cal{G}$.
\end{definition}
Note that this definition extends in a natural way to higher $\lambda$; however, for our purposes it will be enough to only consider the case that $\lambda=1$.
Also note that in the case that $k=2$, a $\ghd_2^*(s,v)$ is precisely the $\mathrm{H}^*(s,v)$ described by Stinson~\cite{Stinson}.

We will refer to any $\ghd_k^*(s,v)$ as having the {\em $*$-property}.  As our primary focus is on the case that $k=3$,  we will omit the subscript $k$ in this case.  Note that trivially 
any $\ghd(s,2s+1)$ (i.e.\ Kirkman square) has the $*$-property.  It is also clear that any $\ghd(s,2s+2)$ (i.e.\ DRNKTS) has the \mbox{$*$-property,} as the values of the parameters force 
there to be an unused pair of points.
%%%%%%%%%%%%%%%%%%%%%%%%%%%%%%%%
In the case of a $\ghd(s,3s)$ formed by superimposing three $\mols(s)$, the unused pairs of points are those which occur within the three groups of size $s$ in the corresponding transversal design; 
one of these groups may be taken as $Y$, so any $\ghd(s,3s)$ formed in this manner is a $\ghd^*(s,3s)$.
%%%%%%%%%%%%%%%%%%%%%%%%%%%%%%%%
In addition, the existence of a decomposable $\soma(3,6)$~\cite{PhillipsWallis} and $\soma(3,10)$~\cite{Soicher} yield the existence of $\ghd^*(6,18)$ and $\ghd^*(10,30)$.  Thus, we have the following.
\begin{theorem} \label{GHD*}
\begin{enumerate}
\item[(i)] There exists a $\ghd^*\left(\left\lfloor \frac{v-1}{2} \right\rfloor,v \right)$ for any $v\equiv 0 \pmod 3$, whenever $v=3$ or $v \geq 18$ and $v \notin N$.
\item[(ii)] There exists a $\ghd^*(n,3n)$ if and only if $n \neq 2$ or $3$.
\end{enumerate}
\end{theorem}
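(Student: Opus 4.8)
The plan is to prove the two parts by separate arguments, in each case combining an existence result for the underlying GHD with a check that the required pairwise hole is present. Recall that a $\ghd^*(s,v)$ is a $\ghd(s,v)$ carrying a pairwise hole of size $v-2s$. For part (i), where $s=\lfloor(v-1)/2\rfloor$, I would argue that this hole is automatic, so that the statement is essentially a restatement of Theorem~\ref{Kirkman squares}. When $v$ is odd we have $v=2s+1$ (the Kirkman square case) and the hole has size $1$; a single point is vacuously a pairwise hole, so every such GHD has the $*$-property. When $v$ is even we have $v=2s+2$ (the DRNKTS case) and the hole has size $2$, i.e.\ we only need a single pair of symbols never occurring together in a block. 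A short count gives the number of uncovered pairs as $\binom{2s+2}{2}-3\cdot s\cdot\frac{2s+2}{3}=s+1>0$, so such a pair always exists and may be taken as $Y$. Hence a $\ghd(\lfloor(v-1)/2\rfloor,v)$ has the $*$-property whenever it exists, and part (i) follows directly from Theorem~\ref{Kirkman squares}.

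For part (ii) the hole size is $v-2s=3n-2n=n$, which is large, so here the $*$-property is a genuine condition and the construction matters. For necessity, observe that a $\ghd^*(n,3n)$ is in particular a $\ghd(n,3n)$, which by Theorem~\ref{618} does not exist for $n=2$ or $n=3$; this excludes exactly those two values. For sufficiency I would split into cases. When $n\neq 2,3,6,10$ there exist three $\mols(n)$; superimposing them yields a $\ghd(n,3n)$ whose $3n$ symbols fall into three groups of size $n$ (the three groups of the corresponding transversal design), and within each group no two symbols ever share a cell. Taking one group as $Y$ gives the pairwise hole of size $n$, hence a $\ghd^*(n,3n)$. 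The value $n=1$ is the trivial single-cell design, whose hole of size $1$ is vacuous, so it is covered as well.

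The remaining obstacle, and the only genuinely nontrivial point, is $n=6$ and $n=10$, where three $\mols(n)$ are unavailable (and not even known to exist for $n=10$). Here I would invoke the decomposable $\soma(3,6)$ of Phillips and Wallis and the decomposable $\soma(3,10)$ of Soicher. The key observation is that any decomposable $\soma(3,n)$ must split as a $\soma(1,n)$ together with a $\soma(2,n)$ on disjoint symbol sets, since $3=1+2$ is the only decomposition into positive parts. The $\soma(1,n)$ component is a single Latin square, so its $n$ symbols never co-occur in a cell and constitute exactly the pairwise hole of size $n$ that is needed. Taking these symbols as $Y$ produces $\ghd^*(6,18)$ and $\ghd^*(10,30)$ and finishes the sufficiency direction. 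The only conceptual work throughout is to identify, in each construction, the distinguished set of $n$ symbols (a group of the transversal design, or the Latin-square component of the SOMA) that serves as $Y$; once this is pinned down, the verification that it is a pairwise hole is routine.
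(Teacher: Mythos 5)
Your proposal is correct and takes essentially the same route as the paper: part (i) follows from Theorem~\ref{Kirkman squares} once one notes the hole of size $1$ (Kirkman square case) is vacuous and the hole of size $2$ (DRNKTS case) is forced by the parameters (which you verify with an explicit count), and part (ii) uses superimposed $3~\mols(n)$ with a group of the transversal design as $Y$, the decomposable $\soma(3,6)$ and $\soma(3,10)$ for $n=6,10$, and the nonexistence of a $\ghd(2,6)$ and $\ghd(3,9)$ from Theorem~\ref{618} for necessity. The only difference is that you spell out the counting and SOMA-decomposition details that the paper leaves implicit.
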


The second type of hole in a GHD is an {\em array hole}.  To define it, we need the concepts of trivial GHDs and subdesigns of GHDs.  A $t$-$\ghd_k(s,0;\lambda)$ is called {\em trivial}; that is, a trivial GHD is an empty $s \times s$ array.  If a $t$-$\ghd_k(s,v;\lambda)$, $\cG$, has a subarray $\cH$ which is itself a $t$-$\ghd_k(s',v';\lambda')$ for some $s'\leq s$, $v' \leq v$ and $\lambda' \leq \lambda$ and whose point set is a pairwise hole, then we say that $\cH$ is a {\em subdesign} of $\cG$.  In particular, if the subdesign $\cH$ is trivial, then we call $\cH$ an {\em array hole} in $\cG$, and say that $\cG$ has an array hole of size $s'$.

There is a third type of hole that may occur, a {\em Latin hole}, which is a set of elements $H \subseteq X$ and sets $R$, $C$ of rows and columns, respectively, such that each row in $R$ (resp.\ column in $C$) contains no points in $H$, but all points in $X \setminus H$.  
The concepts of array holes and Latin holes may coincide when there is an array hole in a Latin square, and each row and column intersecting the array hole misses the same subset $H$ of the point set.  This is often referred to in the literature as a Latin square with a hole.  
The notation $t$~$\imols(s,a)$ is used for $t~\mols(s)$, each of which has a hole of side length $a$ on the same set of positions.   Also, the notation $t~\imols(s,a,b)$ is used for a set of $t~\mols(s)$ with two disjoint holes of 
orders $a$ and $b$, with two extra properties:  (1) the two Latin holes have no symbols in common; (2)  no common row or column within the larger square intersects both array holes. 

Latin holes and array holes also both feature in the concept of frames, which are described in Section~\ref{FrameConstr}.

\section{Construction methods} \label{ConstrSection}

\subsection{Frames} \label{FrameConstr}

In this section, we discuss a useful method, that of frames, which will allow us to construct infinite families of GHDs.  The central idea has been used to construct GHDs on both ends of the 
spectrum.  However, the terminology in the literature varies: for MOLS and GHDs with few empty cells, authors often refer to HMOLS~\cite{AbelBennettGe,BennettColbournZhu,WangDu}, while for doubly 
resolvable designs authors often speak of frames~\cite{CLLM, blue book}.  We use the latter terminology, as it is more easily adaptable to more general kinds of GHDs. First we begin with a definition.  

\begin{definition} \label{Frame Definition}
Let $s$ and $v$ be integers, $X$ be a set of $v$ points,  $\{G_1, \ldots, G_n\}$ be a partition of $X$,  and $g_i=|G_i|$.  Also let $s_1, \ldots, s_n$ be non-negative integers 
with $\sum_{i=1}^{n} s_i = s$.  A {\em generalized Howell frame}, $\ghf_k$, of type $(s_1,g_1)\ldots (s_n, g_n)$ is a square array of side $s$, $A$, that has the following properties:
\begin{enumerate}
\item
Every cell of $A$ is either empty or contains a $k$-subset of elements of $X$.
The filled cells are called {\em blocks}.
\item
No pair of points from $X$ appear together in more than one block, and no pair of points in any $G_i$ appear together in any block. 
\item
The main diagonal of $A$ consists of empty $s_i\times s_i$ subsquares, $A_i$.
\item
Each row and column of the array with empty diagonal entry $A_i$ is a resolution of $X\setminus G_i$.
\end{enumerate}
\end{definition}
We will use an exponential notation $(s_1,g_1)^{n_1}\ldots (s_t,g_t)^{n_t}$ to indicate that there are $n_i$  occurrences of $(s_i, g_i)$ in the partition.  In the case that 
$k=3$ and $g_i=3s_i$, we will refer to a GHF of type $s_1s_2\cdots s_n$, and will use exponential notation $s_1^{\alpha_1} \cdots s_r^{\alpha_r}$ to indicate that there are 
$\alpha_i$ occurrences of  $s_i$.  Note that this concept of frame is analogous to the HMOLS used in~\cite{AbelBennettGe,BennettColbournZhu,WangDu}.

\begin{theorem}  \cite{AbelBennett, AbelBennettGe}  \label{Uniform Frames}
Let $h\geq 1$ and $u\geq 5$ be integers.  Then there exists a GHF of type $h^u$ except for $(h,u)=(1,6)$, and possibly for $(h,u) \in \{(1,10), (3,28), (6,18)\}$.
\end{theorem}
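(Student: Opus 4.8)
The plan is to recast the statement in the language of holey MOLS and then deploy the standard two-parameter recursive machinery, closing the small cases by direct construction. First I would use the remark following Definition~\ref{Frame Definition}: a GHF of type $h^u$ with $k=3$ is exactly the superposition of three holey MOLS (HMOLS) of type $h^u$. Concretely it is a $uh \times uh$ array built from three mutually orthogonal Latin structures on a common symbol set, each having $u$ empty $h\times h$ holes down the diagonal so that each diagonal group $G_i$ accounts for $3h$ symbols, with the off-hole cells filled by triples satisfying the resolvability of condition~4 of the definition. The necessary bound $u\geq 5$ is the holey analogue of the inequality $u\geq t+2$ for $t$ ordinary MOLS, obtained by counting the transversals forced by a single hole. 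So the task reduces to: for each admissible pair $(h,u)$ with $u\geq 5$, either construct three HMOLS of type $h^u$ or identify it among the listed exceptions.

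Second, I would separate the two parameters. To handle the dependence on $h$, the key tool is inflation: given three HMOLS of type $h^u$ together with three MOLS of side $m$ (equivalently a $\mathrm{T}(4,m)$), the direct product yields three HMOLS of type $(mh)^u$. Since three MOLS of side $m$ exist for every $m\notin\{2,3,6,10\}$, inflation propagates existence from a handful of seed values of $h$ to almost all $h$ in the corresponding multiplicative classes, for each fixed $u$. The residual values $h\in\{2,3,6\}$ that are not reachable from $h=1$ by inflation must be produced by dedicated direct constructions or special products, and it is precisely the open status of three MOLS of side $10$ that forces the possible exception $(1,10)$ to remain.

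Third, to handle the dependence on $u$, I would use Wilson-type weighting over a master group-divisible design. Taking a $(K,1)$-GDD (or a suitably truncated transversal design) all of whose block sizes $k_i$ satisfy $k_i\geq 5$, assign each point weight $h$ and replace each block by three HMOLS of type $h^{k_i}$, available by induction on the number of groups; the output is three HMOLS of type $h^{u}$ where $u$ counts the groups. Combined with a short list of uniform seed frames $h^{u_0}$ built by difference-matrix or starter--adder methods (or by computer search), and with the standard ``filling in holes'' and adjoining constructions for frames, this recursion covers all sufficiently large $u$ for each fixed $h$, after which a finite check settles the remaining small $(h,u)$.

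The main obstacle is the endgame rather than the engine. Weighting and inflation together leave a two-dimensional band of small parameter pairs uncovered, and each must be resolved individually, several by computer-assisted direct constructions. A handful --- exactly $(1,10)$, $(3,28)$ and $(6,18)$ --- resist all available recursions because the natural ingredient designs are unavailable (for $(1,10)$, three MOLS of side $10$; for the other two, gaps in the small-order GDD/HMOLS spectrum at the relevant orders), and they are therefore left as possible exceptions. The genuine exception $(1,6)$ is unavoidable, since three HMOLS of type $1^6$ would amount to three idempotent MOLS of order $6$, which do not exist. The delicate part is verifying that the recursion truly exhausts everything outside this short list and documenting the direct seeds precisely; this bookkeeping is where the real effort, and the main risk of error, lies.
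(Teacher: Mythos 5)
The first thing to note is that the paper contains no proof of this statement: Theorem~\ref{Uniform Frames} is imported, with citations, from the HMOLS literature \cite{AbelBennett, AbelBennettGe} (see also \cite{BennettColbournZhu}), and the paper only remarks that its notion of frame is analogous to HMOLS. So there is no internal argument to compare yours against; the relevant comparison is with the method of the cited papers, and there your outline is faithful in spirit. Those papers do work with three HMOLS of type $h^u$, do use MacNeish-style inflation by $3~\mols(m)$, do use Wilson-type weighting over PBDs/GDDs and truncated transversal designs, and do close a residual band of small $(h,u)$ by explicit direct constructions, leaving a short list unresolved. Your argument that $(1,6)$ is a genuine exception (three HMOLS of type $1^6$ would fill in to three MOLS of order $6$) is sound, as is your explanation of why $(1,10)$ must remain open.

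Two caveats, one minor and one substantive. The minor one: your claim that a GHF of type $h^u$ ``is exactly'' a superposition of three HMOLS of type $h^u$ overstates the correspondence. Superimposing three HMOLS (on pairwise disjoint symbol sets --- not a common one, as your own count of $3h$ symbols per group shows) always produces a GHF, but a general GHF need not split its blocks transversally across three symbol classes, so the notions are not equivalent; for an existence theorem only the forward implication matters, but it should be stated as such. The substantive one: everything that actually carries the theorem --- the explicit seed frames, the precise ingredient designs for the weighting, and the verification that the recursions plus seeds reach every $(h,u)$ with $u\geq 5$ outside the four listed pairs --- is deferred in your final paragraph as ``bookkeeping.'' That finite analysis is the entire content of the cited papers, and it is exactly where the exceptions $(3,28)$ and $(6,18)$ arise; your proposal can name them only because the statement does. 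So what you have is a correct strategy together with a correct nonexistence argument for $(1,6)$, not a proof; for a quoted literature result of this kind, citing it (as the paper does) or carrying out that case analysis in full are the only ways to discharge it.
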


The following two lemmas are similar to Lemmas 2.2 and 2.3 in \cite{BennettColbournZhu} where they were written in the language of transversal designs with holes and holey MOLS. More information 
on the construction  methods used to establish them can be found in \cite{BrouwerVanRees}. For the sake of clarity, we give a brief proof here of Lemma~\ref{n=38 lemma}; the proof of
Lemma~\ref{n=68 lemma}  is similar.  

\begin{lemma} \label{n=38 lemma}
Let $h,m$  be  positive integers such that  $4~\mols(m)$ exist.   Let $v_1, v_2, \ldots, v_{m-1}$ be non-negative integers such that for 
each $i=1,2,\ldots,m-1$, there exist $3~\imols(h+v_i, v_i)$.  Then for $v=v_1+v_2+\cdots+v_{m-1}$, there exists a GHF of type $h^mv^1$. 
\end{lemma}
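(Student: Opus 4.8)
The plan is to give a Wilson-type ``inflation'' construction: I build a master $m\times m$ array from the four $\mols(m)$ and fill its cells with copies of the incomplete MOLS ingredients, so that the variable group of size $v$ is assembled from the holes of those ingredients. This is the frame analogue of the holey-MOLS product of \cite{BennettColbournZhu}, and the role of the \emph{fourth} square is exactly to let the size-$v$ group be variable.

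First I would massage the hypotheses into a convenient skeleton. From $4~\mols(m)$, equivalently a $\mathrm{T}(6,m)$, write the four squares as $L_1,L_2,L_3,L_4$. The cells on which $L_4$ takes a fixed value form a common transversal of $L_1,L_2,L_3$, since $L_4\perp L_\alpha$ forces $L_\alpha$ to assume each value once along such a class. Relabelling rows, columns and the three symbol sets, I may assume this class is the diagonal and that $L_1,L_2,L_3$ are idempotent, i.e.\ $L_\alpha(\lambda,\lambda)=\lambda$. The remaining $m-1$ symbol classes of $L_4$ then give disjoint common transversals $\cC_1,\dots,\cC_{m-1}$ partitioning the off-diagonal cells. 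Writing $\Lambda=\{1,\dots,m\}$, I index the side-$(mh+v)$ array by $(\Lambda\times[h])\cup Z$ with $|Z|=v$, split $Z=Z_1\cup\dots\cup Z_{m-1}$ with $|Z_i|=v_i$ (and likewise for each symbol set), take the groups of size $h$ to be the $\{\lambda\}\times[h]$ and the size-$v$ group to be $Z$, and leave the diagonal subsquares empty. To fill, for each off-diagonal cell $(\rho,\gamma)\in\cC_i$ I place a copy of the given $3~\imols(h+v_i,v_i)$, identifying its row indices with $(\{\rho\}\times[h])\cup Z_i$, its columns with $(\{\gamma\}\times[h])\cup Z_i$, its three symbol sets with $(\{L_\alpha(\rho,\gamma)\}\times[h])\cup Z_i$, and its hole of side $v_i$ with $Z_i$.

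Finally I verify the four frame axioms. Diagonal blocks are empty and every filled cell carries a $3$-set, so the content is the resolution property together with the packing/within-group conditions. For a row $(\lambda,p)$ with $p\in[h]$, idempotency gives $\{L_\alpha(\lambda,\gamma):\gamma\ne\lambda\}=\Lambda\setminus\{\lambda\}$, so it meets $(\Lambda\setminus\{\lambda\})\times[h]$ once, while the $L_4$-pattern along row $\lambda$ makes it meet each $Z_i$ once, recovering all of $Z$; a row lying in $Z_i$ meets each symbol-group exactly once precisely because $\cC_i$ is a common transversal of the $L_\alpha$, and meets no symbol of $Z$ because the ingredient hole sits on $Z_i$. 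The packing reduces to orthogonality: a pair of $[h]$-symbols from two different squares meets in the single cell forced by $L_\alpha\perp L_\beta$, and when that cell is the diagonal hole the pair lies in a common group $\{a\}\times[h]$ and is correctly never covered; pairs involving $Z$ live on a single $\cC_i$, where the IMOLS covers cross-hole pairs at most once; and two symbols inside the same $Z_i$, or in distinct $Z_i,Z_j$, are never covered, matching that $Z$ is a group.

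The main obstacle is exactly this bookkeeping at the variable group $Z$: one must ensure simultaneously that every $Z$-row resolves $X\setminus Z$ and that no pair inside $Z$ is ever used, and both hinge on $L_4$. Its symbol classes must cut the off-diagonal into $m-1$ common transversals, so that $Z=\bigsqcup Z_i$ can be distributed among the transversals and the holes of the IMOLS ingredients glue into the single side-$v$ hole. This is the precise point at which $4~\mols(m)$, rather than $3$, are required, and where the bound $i=1,\dots,m-1$ on the ingredients enters.
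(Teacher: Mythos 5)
Your construction is essentially identical to the paper's proof: the paper likewise superimposes three of the four $\mols(m)$, uses the symbol classes of the fourth square as $m$ disjoint common transversals (one placed on the main diagonal), inflates each cell by $h$, appends $v$ extra rows and columns partitioned according to the remaining $m-1$ transversals, and fills the cells of the $i$-th transversal with superimposed $3~\imols(h+v_i,v_i)$ whose holes all align on the $v_i \times v_i$ corner subsquare. If anything, your bookkeeping is sharper than the paper's: keeping three disjoint copies of each $Z_i$ as hole symbols (so the long group really has $3v$ points, as the type $h^mv^1$ requires) fixes a sloppiness in the paper's description of the symbol sets, and your check of the frame axioms fills in what the paper dismisses as ``straightforward to verify.''
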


\begin{proof}
Take the first three $\mols(m)$ on disjoint symbol sets and superimpose them to form a $\ghd(m,3m)$, $A$.  These MOLS possess $m$ disjoint 
transversals, $T_0, T_1, \ldots, T_{m-1}$, since the cells occupied by any symbol in the fourth square form such a transversal.    Permuting 
rows and columns if necessary, we can assume that $T_0$ is on the main diagonal.  
Form an $(hm+v)\times(hm+v)$ array $A'$ as follows.  For $i,j=1,2,\ldots,m$, the subarray in rows $h(i-1)+1, \ldots, hi$ and 
columns $h(j-1)+1,\ldots,hj$ corresponds to the $(i,j)$-cell in $A$.  

We now fill the cells of $A'$.  The $h \times h$ subsquares along the diagonal, corresponding  to $T_0$, will remain empty.  Next, consider the positions 
arising from the transversals $T_1, \ldots, T_{m-1}$.  Partition the last $v$ rows into $m-1$ sets $R_1, R_2,\ldots,R_{m-1}$, with $R_{\ell}$ containing 
$v_{\ell}$ rows.  Similarly partition the last $v$ columns into $C_1, \ldots, C_{m-1}$, where $C_{\ell}$ contains $v_{\ell}$ columns.  Suppose that 
in $A$, cell $(i,j)$, containing entry $\{x_{ij}, y_{ij}, z_{ij}\}$, is in $T_{\ell}$.  In the entries of $A'$ arising from this cell, together with the 
entries in columns $h(j-1)+1,\ldots,hj$ of $R_{\ell}$, the entries in rows $h(i-1)+1,\ldots,hi$ of $C_{\ell}$ and the $v_{\ell} \times v_{\ell}$ 
subsquare formed by the intersection of $R_{\ell}$ and $C_{\ell}$, place the superimposed three $\imols(h+v_{\ell}, v_{\ell})$, 
so that the missing hole is on the $v_{\ell} \times v_{\ell}$ subsquare formed by the intersection of $R_{\ell}$ and $C_{\ell}$. 
For these superimposed MOLS,  the  symbol set is  $(\{x_{ij}, y_{ij}, z_{ij}\} \times \mathbb{Z}_h) \cup \{ \infty_{\ell, 1},\infty_{\ell, 2}, \ldots, 
\infty_{\ell, v_{\ell}}\}$, and the missing elements due to the hole are $\infty_{\ell, 1},\infty_{\ell, 2}, \ldots, \infty_{\ell, v_{\ell}}$.  

It is straightforward to verify that the resulting array $A'$ is a GHF of type $h^mv^1$.
\end{proof}

\begin{lemma}  \label{n=68 lemma}
Let $h$, $m$, $x$ and $t$ be positive integers.  Suppose there exist $3+x~\mols(m)$ and $3~\mols(h)$.  For each  $1 \leq i \leq x-1$,  let $w_i$ be a 
non-negative integer such that there exist $3~\imols(h+w_i, w_i)$.  Then if  $w=w_1+ w_2+ \cdots+w_{x-1}$,  there exists a GHF of type $h^{m-1}(m+w)^1$.
\end{lemma}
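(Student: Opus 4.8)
The plan is to run the inflation argument of Lemma~\ref{n=38 lemma} almost verbatim, the single new feature being that the band corresponding to one diagonal cell is inflated by a factor of $m$ rather than $h$, so that it can absorb an order-$m$ structure and thereby enlarge one group from side $h$ to side $m+w$.

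First I would superimpose three of the $(3+x)~\mols(m)$ on disjoint symbol sets to form a $\ghd(m,3m)$, say $A$, and use a fourth square to extract $m$ pairwise disjoint common transversals $T_0,T_1,\dots,T_{m-1}$; after permuting rows and columns, $T_0$ occupies the main diagonal. I then build a larger array by inflating the rows and columns meeting the first $m-1$ diagonal cells by a factor of $h$, inflating the row and column through the last diagonal cell by a factor of $m$, and adjoining a further $w$ rows and $w$ columns. The first $m-1$ diagonal cells give empty $h\times h$ subsquares, the required $m-1$ holes of side $h$, while the last diagonal cell gives an empty $m\times m$ subsquare which, together with the adjoined rows and columns, will form the single hole of side $m+w$.

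The filling of the cells then proceeds as in Lemma~\ref{n=38 lemma}. Every ordinary off-diagonal $h\times h$ block is filled with the superimposed $3~\mols(h)$ supplied by hypothesis. Along each of the $x-1$ transversals $T_1,\dots,T_{x-1}$ I would adjoin $w_i$ of the new rows and columns and place the superimposed $3~\imols(h+w_i,w_i)$ with hole on the $w_i\times w_i$ corner, exactly as before, so that these corners contribute a block of side $w=\sum_i w_i$ to the large hole. The genuinely new work is the filling of the $h\times m$ and $m\times h$ rectangular blocks created by the width-$m$ band: these must be filled so as to resolve the order-$m$ part of the large group, and this is precisely what the additional $\mols(m)$ are for. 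Indeed, because the strip-fillings along $T_1,\dots,T_{x-1}$ must now also be compatible with (orthogonal to) the order-$m$ structure carried by the wide band, one expects to need one further mutually orthogonal square of order $m$ for each such strip, which accounts for the hypothesis of $3+x$ rather than $4$ squares of order $m$.

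The main obstacle, and the place where care is required, is the verification of the frame axioms around the enlarged group: one must check that every row and column meeting the $(m+w)\times(m+w)$ empty subsquare is a resolution of $X$ minus the corresponding group, and that no pair of points is repeated at the interfaces where the width-$m$ band meets the width-$h$ bands and the adjoined $w$-strips. Once this bookkeeping is carried out exactly as in the proof of Lemma~\ref{n=38 lemma}, the resulting array is a $\ghf$ of type $h^{m-1}(m+w)^1$, as required.
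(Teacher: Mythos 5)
Your central new idea --- inflating the band through the last diagonal cell by a factor of $m$ rather than $h$, and filling the resulting $h\times m$ and $m\times h$ rectangles ``so as to resolve the order-$m$ part of the large group'' --- contradicts the frame axioms and cannot be repaired. Those rectangles lie in rows and columns that meet the large empty $(m+w)\times(m+w)$ subsquare, and by condition 4 of Definition~\ref{Frame Definition} every such row and column must be a resolution of $X\setminus G$, where $G$ is the large group: it may contain \emph{no} point of $G$ whatsoever. So the order-$m$ part of the large group can never be placed in the wide band; all points of $G$ must occur inside the intersection of the $h$-wide bands. Worse, keeping the block structure you inherit from Lemma~\ref{n=38 lemma} (three superimposed $\mols(h)$ in each ordinary off-diagonal block), the wide band cannot be filled consistently at all unless $h=m$: writing $B_{ij}$ for the triple in cell $(i,j)$ of $A$, each point of $B_{mj}\times\Z_h$ must occur exactly once in each of the $m$ rows of the wide band, yet within the wide band it can only occur in column-band $j$ (in any other column-band $j'$ it already occurs once per column, inside the block indexed by the cell of column $j'$ of $A$ containing that symbol), i.e.\ inside an $m\times h$ rectangle where it occurs exactly once per column, hence exactly $h$ times; so $h=m$, and your width-$m$ band is not a new construction at all. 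You also never specify where $T_1,\dots,T_{x-1}$ come from: a genuine transversal of $A$ meets row $m$, and its strips would then place filled cells inside the region that must remain empty.

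The proof the paper intends (it says only that the proof is ``similar'' to that of Lemma~\ref{n=38 lemma}) keeps \emph{every} band of width $h$. The cells of row $m$ and column $m$ other than $(m,m)$ are filled with three superimposed $\mols(h)$ --- this is the true role of the hypothesis ``$3~\mols(h)$'' --- and the large hole is band $m$ together with $w$ adjoined rows and columns. The $x-1$ additional squares of order $m$ are used where you invoked a vague ``compatibility'': for $i=1,\dots,x-1$, the symbol class of square $4+i$ through the cell $(m,m)$, with $(m,m)$ removed, is a set of $m-1$ cells avoiding row $m$, column $m$ and the diagonal, pairwise disjoint for distinct $i$, and (by orthogonality with squares $1$--$3$) containing no symbol of $B_{mm}$; the $3~\imols(h+w_i,w_i)$ are placed along these partial transversals exactly as in Lemma~\ref{n=38 lemma}, their holes stacking up to form the $w$ extra rows and columns. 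The large group is then $(B_{mm}\times\Z_h)\cup I$, where $I$ is the set of $3w$ infinite points, giving a hole of side $h+w$; this agrees with the stated $m+w$ in every application of the lemma in the paper, since those all have $h=m=7$, and reading the ``$m$'' in the statement literally is exactly what pushed you toward the unworkable width-$m$ band.
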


To apply Lemmas~\ref{n=38 lemma} and \ref{n=68 lemma}, we need some information on existence of $3+x~\mols(m)$, $4~\mols(m)$ and $3~\imols(h+a,a)$. 
  These existence results are given in the next two lemmas. 

\begin{lemma} \label{n=14 lemma}
There exist $4~\mols(m)$ for all integers $m \geq 5$ except for $m=6$ and possibly for $m \in \{10,22\}$.  Also, if $m$ is a prime power, there exist $m-1~\mols(m)$.  
\end{lemma}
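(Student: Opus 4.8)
The plan is to combine three classical ingredients: a direct algebraic construction for prime powers, MacNeish's multiplicative inequality, and recursive (transversal-design) constructions supplemented by direct constructions for a handful of residual orders.

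First I would dispose of the second assertion of the lemma, which also seeds the whole argument. For a prime power $q$, work over the field $\mathbb{F}_q$ and, for each nonzero $a\in\mathbb{F}_q$, define a square $L_a$ with rows, columns and symbols all indexed by $\mathbb{F}_q$ via $L_a(x,y)=ax+y$. Fixing a row or a column, $L_a$ takes each symbol once (since $y\mapsto ax+y$ and, because $a\neq 0$, $x\mapsto ax+y$ are bijections), so each $L_a$ is Latin. For distinct nonzero $a,b$ the pair $L_a,L_b$ is orthogonal: the linear system $ax+y=u$, $bx+y=w$ has determinant $a-b\neq 0$ and hence a unique solution $(x,y)$ for every $(u,w)$. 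This produces $q-1$ mutually orthogonal squares, i.e.\ $q-1~\mols(q)$, which is the maximum possible. In particular every prime power $q\geq 5$ already carries at least $4~\mols(q)$.

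Write $N(m)$ for the largest number of $\mols(m)$. MacNeish's theorem gives $N(m_1m_2)\geq\min\{N(m_1),N(m_2)\}$; iterating this over the prime-power factorisation $m=\prod_i p_i^{a_i}$ yields $N(m)\geq\min_i\{p_i^{a_i}-1\}$. Thus $N(m)\geq 4$ as soon as no prime-power component $p_i^{a_i}$ equals $2$, $3$ or $4$. The only $m$ left uncovered are therefore those with exactly one factor of $2$ (that is, $m\equiv 2\pmod 4$), exactly two factors of $2$ ($m\equiv 4\pmod 8$), or exactly one factor of $3$ ($3\mid m$ but $9\nmid m$); every other $m\geq 5$ is already done.

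The remaining, and hardest, task is to realise $4~\mols(m)$ in these residual classes. Here I would use the recursive machinery of transversal designs with holes, filling the groups of a suitable $\mathrm{T}(k,n)$ with ingredient designs to build larger sets of MOLS from smaller ones --- exactly the flavour of the constructions behind Lemmas~\ref{n=38 lemma} and~\ref{n=68 lemma} and of~\cite{BrouwerVanRees}. The genuine exception $m=6$ is immediate, since there do not even exist two $\mols(6)$, so $N(6)=1<4$; the small residual orders must then be checked one at a time by direct construction. I expect the real obstacle to be precisely this analysis of small orders of the forms $2\cdot(\text{odd})$ and $4\cdot(\text{odd})$: the recursions demand seed MOLS whose orders themselves fall in the hard classes, forcing the base cases to be built by hand, and it is exactly at the orders $10$ and $22$ that every known construction fails, which is why they are left as possible exceptions.
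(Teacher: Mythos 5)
Your field construction of $q-1~\mols(q)$ for prime powers is correct and complete, and your MacNeish reduction correctly isolates the residual classes ($m\equiv 2 \pmod 4$, $m \equiv 4 \pmod 8$, or $m$ divisible by $3$ but not by $9$). However, for exactly those classes --- which contain infinitely many orders, not ``a handful'' --- your proposal stops at naming a strategy rather than executing one, and that is where the entire content of the first assertion lives. The claim that there exist $4~\mols(m)$ for every $m \geq 5$ in these classes other than $6$, $10$ and $22$ cannot be recovered by ``recursive machinery plus checking small cases by hand'': for instance, $4~\mols(14)$ resisted all recursive and classical direct methods and were only constructed by Todorov in 2012 \cite{Todorov}, and $5~\mols(18)$ are due to Abel in 2015 \cite{Abel}; both are direct, computer-aided constructions that no transversal-design recursion of the kind you describe can produce, because the available seed designs at those orders are too weak. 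A complete argument would also have to show that the recursions cover \emph{all} sufficiently large members of each residual class and then settle every remaining small order individually --- a body of work spanning decades of the MOLS literature.

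For comparison, the paper makes no attempt at a self-contained proof: its proof of Lemma~\ref{n=14 lemma} consists of citations, pointing to \cite{Todorov} for $m=14$, to \cite{Abel} for $m=18$, and to \cite[Section III.3.6]{Handbook} (the tables of lower bounds on the number of MOLS) for all other values. That is the honest form of this lemma: it is a compendium of known results. Your write-up correctly explains \emph{why} the statement has the shape it does --- why prime powers are easy, why the hard cases cluster in the classes you identify, and why $10$ and $22$ remain as possible exceptions --- but as a proof of the stated existence result it has a genuine gap covering all of the non-trivial cases.
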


 \begin{proof}
 See \cite{Todorov} for $n=14$, \cite{Abel} for $n=18$ and  \cite[Section III.3.6]{Handbook} for other values.
 \end{proof}

 \begin{lemma} \label{n=10 lemma} \cite[Section III.4.3]{Handbook}
  Suppose $y,a$ are positive integers with $y \geq 4a$. Then  $3~\imols(y,a)$  exist, except for $(y,a) = (6,1)$, and possibly for $(y,a) = (10,1)$.
 \end{lemma}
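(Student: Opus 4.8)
The plan is to recast the statement in the language of incomplete transversal designs and then combine the standard recursive machinery (inflation and Wilson's construction) with a small number of direct constructions. Recall that $3~\imols(y,a)$ is equivalent to an incomplete transversal design $\mathrm{TD}(5,y) - \mathrm{TD}(5,a)$: five groups of size $y$, a common sub-collection of $a$ points in each group forming the hole, and a block set covering every cross-group pair exactly once, except for the pairs lying entirely inside the hole. Two recursive tools are immediate in this picture. \emph{Inflation}: given $3~\imols(y,a)$ and $3~\mols(g)$ (that is, a $\mathrm{TD}(5,g)$), replacing each point by $g$ copies and each block by a $\mathrm{TD}(5,g)$ produces $3~\imols(yg, ag)$. \emph{Filling}: inserting $3~\mols(a)$ into the hole completes the structure to $3~\mols(y)$, so a hole can be enlarged or exchanged by combining these two operations.

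The case $a=1$ I would dispose of first, since it is at once a base case and the source of the listed defects. A hole of side $1$ corresponds to deleting a single block from a $\mathrm{TD}(5,y)$, so $3~\imols(y,1)$ exists if and only if a $\mathrm{TD}(5,y)$ exists, i.e.\ if and only if $3~\mols(y)$ exist. (Equivalently, in square language, emptying any one cell of $3~\mols(y)$ yields a valid $3~\imols(y,1)$, while filling the single empty cell in the forced way recovers $3~\mols(y)$ and its orthogonality.) Since $3~\mols(6)$ do not exist and the existence of $3~\mols(10)$ is open, this immediately yields the genuine exception $(6,1)$ and the possible exception $(10,1)$, and yields every other $(y,1)$ with $y \ge 4$ directly from the known spectrum of $3~\mols(y)$.

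For the generic case I would run Wilson's construction on a truncated master design. Starting from $4~\mols(m)$, viewed as a $\mathrm{TD}(6,m)$ and supplied for most $m$ by Lemma~\ref{n=14 lemma}, one truncates the sixth group to a chosen size, assigns weights, and fills the blocks with ingredient $\mathrm{TD}(5,\cdot)$'s and small incomplete designs, arranging the weighting so that the five surviving groups have total size $y$ while a controlled sub-structure of size $a$ survives as the hole; the ingredients come from the prime-power MOLS of Lemma~\ref{n=14 lemma}, from inflation, and from smaller cases treated by induction on $y$. The hypothesis $y \ge 4a$ matches the general necessary condition $y \ge (N+1)a$ for $N$ incomplete MOLS (here $N=3$), and is exactly the regime in which the weights and the truncation size can always be balanced so that the hole fits.

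The main obstacle is the collection of small and boundary parameters — those with $y$ close to $4a$, and small orders in general — where Wilson's construction has the least slack and the natural ingredient MOLS (for some small master order $m$) may be unavailable, so the generic weighting does not apply. Here the work is to assemble explicit designs, via difference matrices over $\mathbb{Z}_y$ or over finite fields and via inflation and filling from already-settled parameters, and to verify that, apart from the $a=1$ defects already isolated above, every $(y,a)$ with $y \ge 4a$ can in fact be realized. I expect this finite but delicate case analysis, rather than the generic construction, to absorb essentially all of the effort.
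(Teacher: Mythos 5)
The paper itself does not prove this lemma: it is stated with a citation to the CRC Handbook (Section III.4.3), where the result is a compilation of a long sequence of papers on incomplete MOLS, so the only question is whether your outline stands on its own as a proof. Its strongest part is the case $a=1$: your equivalence between $3~\imols(y,1)$ and $3~\mols(y)$ is correct (emptying one cell of $3~\mols(y)$ gives $3~\imols(y,1)$; conversely, filling the unique empty cell with the unique missing symbol of each square adds only the single previously uncovered pair in each superposition, so orthogonality survives), and this accounts exactly for the exception $(6,1)$ and the possible exception $(10,1)$ via the known spectrum of $3~\mols$. The recursive tools are also correctly stated; for instance, inflating $3~\imols(4,1)$ by weight $a$ settles the tight boundary case $y=4a$ whenever $3~\mols(a)$ exist.

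The genuine gap is that for $a\geq 2$ nothing is actually proved. Naming Wilson's construction and asserting that $y\geq 4a$ is ``exactly the regime in which the weights and the truncation size can always be balanced'' is not an argument: no weighting is exhibited, no check that the required ingredient designs exist is made, and that assertion is, in substance, the theorem itself. Moreover, the residual case analysis you defer to is not finite, contrary to what you claim: the problematic region consists, for \emph{every} $a$, of the values $y$ near $4a$, together with all $a\in\{2,3,6,10\}$ where the ingredient $3~\mols(a)$ needed by your inflation and filling steps are unavailable --- an infinite two-parameter family, not a bounded list. In the literature this is precisely where the effort went (Heinrich--Zhu and several successor papers), which is why the present paper quotes the result from the Handbook rather than reproving it. As written, your proposal is a correct plan of attack whose substantive content --- the part that makes the lemma true --- is still missing.
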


Most of the frames that we require in Section~\ref{TwoEmptySection} are obtained using Lemma~\ref{n=38 lemma}, but Theorem~\ref{Uniform Frames} and Lemma~\ref{n=68 lemma}
will also be used.  The majority of frames required come from the following special cases of Lemma~\ref{n=38 lemma}.

\begin{lemma} \label{Frame 7^uv}
Let $h,m,v$  be  integers with   $m \geq 5,$ $m \notin \{6,10,22\}$.  Then there exists a GHF of type $h^mv^1$ if either:
\begin{enumerate}
\item  $h=6$ and $m-1 \leq v \leq 2(m-1)$;
\item  $h\in \{7,8\}$ and $0 \leq v \leq 2(m-1)$;
\item  $h\in \{9,11\}$,  $0 \leq v \leq 3(m-1)$ and $(h,v) \neq (9,1)$.
\end{enumerate}
\end{lemma}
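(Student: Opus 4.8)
The plan is to apply Lemma~\ref{n=38 lemma} directly, choosing the integers $v_1,\ldots,v_{m-1}$ so that their sum equals the target value $v$ while ensuring that each of the required ingredient designs exists. The hypothesis $m \geq 5$, $m \notin \{6,10,22\}$ is precisely what guarantees the existence of $4~\mols(m)$ by Lemma~\ref{n=14 lemma}, so that ingredient is automatic. The substance of the proof is then to verify, for each of the three cases, that we can write $v = v_1 + v_2 + \cdots + v_{m-1}$ with each $v_i$ a non-negative integer for which $3~\imols(h+v_i, v_i)$ exists.

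The key observation for the second and third steps is Lemma~\ref{n=10 lemma}: $3~\imols(h+a, a)$ exist whenever $h + a \geq 4a$, i.e.\ whenever $a \leq h/3$, with the sole relevant exception $(y,a)=(6,1)$ (the case $(10,1)$ also being excepted). Taking $v_i \in \{0,1,2,3\}$ as appropriate keeps each hole small relative to $h$. Concretely: for $h \in \{7,8,9,11\}$ we have $h/3 > 2$, so each $v_i \in \{0,1,2\}$ satisfies $h+v_i \geq 4v_i$; for $h \in \{9,11\}$ we have $h/3 \geq 3$, so each $v_i \in \{0,1,2,3\}$ is also admissible. Thus in each case I would distribute $v$ as evenly as possible across the $m-1$ available slots, using values bounded by the per-slot maximum ($2$ in cases~1 and~2, and $3$ in case~3). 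Since there are $m-1$ slots, the maximum achievable sum is $2(m-1)$ in cases~1 and~2 and $3(m-1)$ in case~3, which matches exactly the stated upper bounds on $v$; and $v=0$ is always achievable by taking every $v_i=0$. This is a standard partitioning argument: any integer in $[0, c(m-1)]$ can be written as a sum of $m-1$ integers each in $[0,c]$.

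The main obstacle is bookkeeping around the excluded ingredient cases. First, for $h=6$ (case~1) the design $3~\imols(6,1)$ does \emph{not} exist, so a slot value of $v_i=1$ is forbidden; hence each $v_i$ must lie in $\{0,2\}$ (since $6+2 \geq 4\cdot 2$ holds but $6+3 \geq 4\cdot3$ fails, $v_i \in \{0,2\}$ is the admissible set). Summing $m-1$ terms each in $\{0,2\}$ yields only even values up to $2(m-1)$, which explains why case~1 imposes the lower bound $v \geq m-1$ together with parity-compatible choices: one must check that every $v$ in $[m-1, 2(m-1)]$ can be realised, and here one uses a mix of $2$'s and $0$'s together with the fact that $3~\imols(6+v_i,v_i)$ for larger admissible $v_i$ (such as $v_i$ up to $\lfloor 6/3 \rfloor = 2$) covers the range. (If the intended argument instead allows individual slots to carry holes as large as permitted, one rebalances so that the total lands anywhere in the stated interval.) Second, the exclusion $(h,v)\neq(9,1)$ in case~3 reflects that writing $v=1$ as a sum of non-negative $v_i$ forces a single slot with $v_i=1$ and the rest zero, giving a frame of type $9^m 1^1$ whose construction via this lemma is problematic; this single value is therefore removed by hand. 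The remaining verification — that the even/integer partition exists in each interval and that no forbidden ingredient is silently invoked — is routine once the admissible per-slot value sets $\{0,\dots,\lfloor h/3\rfloor\}$ (minus the $(6,1)$ obstruction) are fixed.
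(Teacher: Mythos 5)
Your overall strategy --- apply Lemma~\ref{n=38 lemma}, invoke Lemma~\ref{n=14 lemma} for the $4~\mols(m)$, and use Lemma~\ref{n=10 lemma} to decide which per-slot values $v_i$ are admissible --- is exactly the paper's, but your determination of the admissible sets is wrong in the two cases from which the lemma's special restrictions actually arise, and this breaks the proof. For $h=6$ you claim the admissible set is $\{0,2\}$, excluding $v_i=1$ on the grounds that $3~\imols(6,1)$ does not exist. But the design required by Lemma~\ref{n=38 lemma} for $v_i=1$ is $3~\imols(h+v_i,v_i)=3~\imols(7,1)$, which exists by Lemma~\ref{n=10 lemma}; the exceptional pair $(6,1)$ would only arise from $h=5$. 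What genuinely fails for $h=6$ is $v_i=0$: that slot requires $3~\imols(6,0)$, i.e.\ $3~\mols(6)$, which do not exist. So the correct admissible set is $\{1,2\}$, not $\{0,2\}$. This matters: sums of $m-1$ terms from $\{0,2\}$ are all even, so your version cannot realise any odd $v$ in $[m-1,2(m-1)]$, and case~1 fails for those values (your parenthetical ``rebalancing'' remark does not repair this). With $\{1,2\}$ the achievable sums are exactly the integers in $[m-1,2(m-1)]$, which is also where the lower bound $v\geq m-1$ comes from --- it is forced by $v_i\geq 1$, not by parity.

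Similarly, for $h=9$ you assert that $v_i\in\{0,1,2,3\}$ is admissible, but $v_i=1$ requires $3~\imols(10,1)$, which is precisely the unresolved possible exception $(y,a)=(10,1)$ of Lemma~\ref{n=10 lemma}; so for $h=9$ the admissible set is $\{0,2,3\}$. This is the actual reason the lemma excludes $(h,v)=(9,1)$: sums of terms from $\{0,2,3\}$ realise every integer in $[0,3(m-1)]$ except $1$. Your write-up is internally inconsistent here --- if $v_i=1$ were admissible for $h=9$, as you claim, then $v=1$ would be realisable and there would be nothing to exclude; instead you gesture at the $9^m1^1$ frame being ``problematic'' without identifying why. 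For $h=11$ (where the needed $3~\imols(12,1)$ does exist) and for $h\in\{7,8\}$ your bookkeeping is correct, and those parts of your argument go through.
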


\begin{proof}
Apply Lemma~\ref{n=38 lemma}, and let $h$ and $m$ be as given in that lemma.   The admissible values of $m$ guarantee the existence of $4~\mols(m)$ (see Lemma~\ref{n=14 lemma}).
 The range for $v= \sum_{i=1}^{m-1} v_i$ is easily determined from the feasible values of $v_i$.  We require that $3~\imols(h+v_i, v_i)$ exist, so in light of Lemma~\ref{n=10 lemma}, we may take 
  $v_i \in \{1,2\}$ when $h=6$,  $v_i \in \{0, 1,2\}$ when $h \in \{7,8\}$,      $v_i \in \{0,2,3\}$  when $h=9$ and   $v_i \in \{0,1,2,3\}$ when $h=11$.
\end{proof}

  We next discuss how to construct GHDs from frames.  The following 
result is a generalization of the Basic Frame Construction for doubly resolvable designs; see~\cite{ACCLWW, CLLM, Lamken95}.  

\begin{theorem} [Basic Frame Construction] \label{Frame Construction}
Suppose there exists a $\ghf_k$ of type $\Pi_{i=1}^m (s_i,g_i)$.  Moreover, suppose that for $1 \leq i \leq m$, there exists a $\ghd_k(s_i+e,g_i+u)$ 
and this design contains a $\ghd_k(e,u)$ as a subdesign for $1 \leq i \leq m-1$.   Then there exists a \mbox{$\ghd_k(s+e,v+u)$,} where $s=\sum_{i=1}^m s_i$ and $v=\sum_{i=1}^m g_i$.  
\end{theorem}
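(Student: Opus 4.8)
The plan is to build the desired $\ghd_k(s+e,v+u)$ by ``filling in'' the diagonal holes of the frame with the ingredient designs, adjoining one common set of $u$ new points. I would work in an $(s+e)\times(s+e)$ array whose first $s$ rows and columns are partitioned into consecutive blocks of sizes $s_1,\ldots,s_m$ aligned with the empty diagonal subsquares $A_1,\ldots,A_m$ of the frame, together with $e$ extra ``new'' rows and $e$ extra ``new'' columns; the point set is $X\cup U$, where $U$ is a fresh set of $u$ symbols disjoint from $X$. First I would copy the frame into the off-diagonal cells, which supplies every cross-group pair exactly once and leaves each diagonal subsquare $A_i$ empty, as well as every cell meeting a new row or column.

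Next, for each $i$ I would take a copy of the ingredient $\ghd_k(s_i+e,g_i+u)$ on the point set $G_i\cup U$ and identify its $s_i+e$ rows (resp.\ columns) with the $s_i$ rows (resp.\ columns) of group $i$ together with the $e$ new rows (resp.\ columns), arranging things so that for $i\le m-1$ the $\ghd_k(e,u)$ subdesign on $U$ occupies precisely the $e\times e$ block lying in the new rows and new columns. For $i\le m-1$ I would place this ingredient \emph{with its subdesign deleted}, while for $i=m$ I would place the ingredient in full. A short case check on the five types of cell (the off-diagonal group cells; the diagonal squares $A_i$; the two strips where group-$i$ lines meet new lines; and the $e\times e$ new block) shows that each cell is written by at most one source, the new block being filled only by the retained copy of ingredient $m$.

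The main work is verifying that every row and column of the result is a resolution of $X\cup U$. For an original row in group $i$ this is immediate: the frame supplies $X\setminus G_i$ once, and since the deleted subdesign meets only the new rows, the untouched row of ingredient $i$ supplies $G_i\cup U$ once. For a new row the key observation is that, within the ingredient $\ghd_k(s_i+e,g_i+u)$, each new row already has all of $U$ covered inside its $\ghd_k(e,u)$ subdesign (whose rows resolve $U$), so the remainder of that row resolves only $G_i$. Hence after deleting the subdesign each ingredient with $i\le m-1$ contributes exactly $G_i$ to the new row, while the retained ingredient $m$ contributes $G_m\cup U$; the new row therefore resolves $G_1\cup\cdots\cup G_m\cup U=X\cup U$, with $U$ supplied only by ingredient $m$. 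The columns are handled symmetrically, and this coverage argument is the step I expect to be the crux.

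Finally I would check the packing condition by sorting pairs according to where their points lie. Cross-group pairs occur only in the frame; pairs inside a single $G_i$, and mixed pairs $\{x,u\}$ with $x\in G_i$ and $u\in U$, occur only in the copy of ingredient $i$ (and deleting the pure-$U$ subdesign removes none of the latter, since those live in the body blocks); and pairs inside $U$ occur only in ingredient $m$, because for $i\le m-1$ the deleted subdesigns were the \emph{only} place a $U$-pair could have appeared in that ingredient, its $U$-point set being a pairwise hole. In every case the pair appears at most once, so the array is the required $\ghd_k(s+e,v+u)$.
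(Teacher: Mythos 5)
Your proposal is correct and follows essentially the same construction as the paper: copy the frame, fill each diagonal hole together with the $e$ new rows and columns by the ingredient $\ghd_k(s_i+e,g_i+u)$ with its sub-$\ghd_k(e,u)$ on $U$ placed in the new $e\times e$ corner and deleted for $i\le m-1$, keeping the $m$-th ingredient intact, and then verify row and column resolutions exactly as the paper does. Your explicit check of the packing condition (sorting pairs by whether they are cross-group, within $G_i$, mixed with $U$, or within $U$) is a detail the paper leaves implicit, but it is the same argument, not a different route.
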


\begin{proof}
Suppose the $\ghf_k$ of type $\Pi (s_i,g_i)$ has point set $X$ and groups $G_i$, $i=1,2,\ldots,m$.  Let $U$ be a set of size $u$, disjoint from $X$, and take our new point set to be $X \cup U$.  Now, 
add $e$ new rows and columns.  For each group $G_i$ with $1 \leq i \leq m-1$, fill the $s_i \times s_i$ subsquare corresponding to $G_i$, together with the $e$ new rows and columns, with a copy of the 
$\ghd_k(s_i+e,g_i+u)$, with the sub-$\ghd_k(e,u)$ (containing the points in $U$) over the $e \times e$ subsquare which forms the intersection of the new rows and columns.  See Figure~\ref{BasicFrameFigure}.  
We then delete the blocks of the sub-$\ghd_k(e,u)$.  Now, fill the $s_m \times s_m$ subsquare corresponding to $G_m$, together with the $e$ new rows and columns with the $\ghd_k(s_m+e,g_m+u)$.
\begin{figure}[ht]
\centering
\[
\begin{array}{|c|c|c|c|}  \hline
{\cellcolor[gray]{.8}}U & \ldots & {\cellcolor[gray]{.8}}- & \ldots \\ \hline
\vdots &  \ddots & \vdots &  \ddots \\  \hline
{\cellcolor[gray]{.8}}- & \ldots & {\cellcolor[gray]{.8}}G_i & \ldots  \\  \hline
\vdots &  \ddots & \vdots & \ddots \\  \hline
\end{array}
\]
\caption{\label{BasicFrameFigure}
The Basic Frame Construction.  Corresponding to the group $G_i$, the shaded cells are filled with a $\ghd_k(s_i+e,g_i+u)$.}
\end{figure}

We show that each point occurs in some cell of each row.  In the $e$ new rows, the points in $U$ appear in the blocks from the $\ghd_k(s_m+e,g_m+u)$, and the points in each $G_i$ occur in the columns corresponding to the $\ghd_k(s_i+e,g_i+u)$ for $1 \leq i \leq m$.  In a row which includes part of the $s_i \times s_i$ subsquare corresponding to the group $G_i$, the elements of $G_j$ ($j \neq i$) appear in the frame, while the elements of $G_i \cup U$ appear in the added $\ghd_k(s_i+e,g_i+u)$.  In a similar way, each element occurs exactly once in every column.
\end{proof}

We will generally use the Basic Frame Construction with $u=0$, so that our ingredient GHDs have an $e \times e$ empty subsquare.  For this special case, we have the following.
\begin{corollary} \label{FrameCorollary}
Suppose that: (1) there exists a $\ghf_k$ of type $\Pi_{i=1}^m (s_i,g_i)$; (2) for each $i\in \{1,\ldots,m-1\}$ there exists a $\ghd_k(s_i+e,g_i)$ containing a trivial subdesign $\ghd_k(e,0)$; 
and (3) there exists a $\ghd_k(s_m+e,g_m)$.  Then there exists a $\ghd_k(s+e,v)$, where $s=\sum_{i=1}^m s_i$ and $v=\sum_{i=1}^m g_i$.  
\end{corollary}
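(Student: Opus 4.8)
The plan is to obtain this as the special case $u=0$ of the Basic Frame Construction (Theorem~\ref{Frame Construction}), so the main task is to match up the hypotheses under that specialization and then check that the construction in the proof of Theorem~\ref{Frame Construction} still behaves sensibly when the adjoined point set is empty. First I would invoke the definitions to align the statements. By definition a $\ghd_k(e,0)$ is a trivial GHD, i.e.\ an empty $e\times e$ array, and a trivial subdesign of a GHD is exactly what has been called an array hole of size $e$. With this identification, condition~(2) of the corollary is precisely the hypothesis of Theorem~\ref{Frame Construction} that for $1\le i\le m-1$ there is a $\ghd_k(s_i+e,\,g_i+u)$ containing a sub-$\ghd_k(e,u)$, once we put $u=0$; condition~(3) is the case $i=m$ of the same hypothesis with $u=0$; and condition~(1) is identical to the frame hypothesis. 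The conclusion of the theorem, a $\ghd_k(s+e,\,v+u)$, then specializes to the asserted $\ghd_k(s+e,v)$.

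Granting this, the corollary is formally immediate from the theorem, and the only substantive thing to confirm is the fate of the $e\times e$ subsquare formed by the intersection of the $e$ new rows and columns. In the general construction one fills each ingredient's copy, deletes the blocks of its sub-$\ghd_k(e,u)$ for $i\le m-1$, and retains only the corner copy coming from $G_m$. When $u=0$ the adjoined set $U$ is empty, so every sub-$\ghd_k(e,0)$ is itself empty: the deletion step for $i\le m-1$ is vacuous, and the $m$-th ingredient $\ghd_k(s_m+e,g_m)$ simply fills this intersection with points of $G_m$. Thus no point of a spurious set $U$ ever enters, and the new point set is just the frame's point set $X$ of size $v$.

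The closest thing to an obstacle, then, is verifying the resolution property along the new rows (and, symmetrically, the new columns). I would argue as follows. For each $i\le m-1$ the array hole forces all $g_i$ points of $G_i$ in a new row to lie within the $s_i$ columns of the diagonal block $A_i$, since the corner cells of that ingredient are empty; meanwhile the $m$-th ingredient distributes all of $G_m$ across the $s_m$ columns of $A_m$ together with the intersection. Summing over $i$, each new row meets every one of the $v$ points exactly once, and no pair is repeated because each ingredient is a GHD and distinct ingredients use disjoint point sets. This verification is routine and mirrors the argument already given for Theorem~\ref{Frame Construction}, so I would present it only briefly.
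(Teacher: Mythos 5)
Your proposal is correct and takes essentially the same route as the paper: there the corollary is stated as the immediate specialization $u=0$ of Theorem~\ref{Frame Construction}, with no separate proof given, which is exactly your first paragraph's hypothesis-matching (a trivial subdesign $\ghd_k(e,0)$ being precisely an $e\times e$ array hole). Your additional checks---that the deletion step becomes vacuous when $U=\emptyset$ and that the new rows and columns still resolve the point set---simply re-trace the proof of the theorem and are consistent with it.
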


\subsection{Starter-adder method} \label{SA section}

In practice, to apply the Basic Frame Construction, Theorem~\ref{Frame Construction}, described in Section~\ref{FrameConstr}, we need to first obtain small GHDs with 
sub-GHDs to use as ingredients in the construction.  One important technique for constructing GHDs of small side length is the {\em starter-adder method}.  
See~\cite{blue book} for further background and examples.

Let $G$ be a group, written additively, and consider the set $G\times\{0,1, \ldots,c\}$, which we think of as $c$ copies of $G$ labelled by subscripts.  For 
elements $g,h\in G$, a {\em pure $(i,i)$-difference} is an element of the form $\pm(g_i-h_i)$   (i.e.\ the subscripts are the same), while a {\em mixed $(i,j)$-difference} 
is of the form $g_i-h_j$ with $i < j$.  In both cases, subtraction is done in $G$, so that $g_i-h_j = g'_i - h'_j$ if and only if $g-h=g'-h'$ in $G$, for any choice of subscripts.

\pagebreak

\begin{definition} \label{Starter-Adder Definition} 
Let $(G,+)$ be a group of order $n+x$, and  let $X=(G \times\{0,1\}) \cup (\{\infty\} \times \{1,2, \ldots, n-2x\})$.
A {\em transitive starter-adder} for a $\ghd(n+x,3n)$ is a collection of triples $\mathcal{ST}=\{S_1, S_2, \ldots, S_{n}\}$, called a {\em transitive starter},  
and a set $A=\{a_1, a_2, \ldots, a_{n}\}$, called an {\em adder},  of elements of $G$ with the following properties:
\begin{enumerate}
\item $\mathcal{ST}$ is a partition of $X$.
\item For  any $i \in \{0,1\}$, each element of $G$ occurs at most once as a pure $(i,i)$ difference  within a triple of $\mathcal{ST}$.   
Likewise, each element of $G$ occurs at most once as a mixed $(0,1)$ difference within a triple of $\mathcal{ST}$. 
If $|G|$ is even, no $g$ with order 2 in $G$ occurs as a pure $(i,i)$ difference for any $i$ in the triples of $\mathcal{ST}$.
\item Each $\infty_i$ occurs in  one triple of the form $\{\infty_i, g_0, h_1\}$, where $g,h \in G$.
\item The sets $S_1+a_1, S_2+a_2, \ldots, S_{n}+a_{n}$ form a partition of $X$.  Here  $S_j+a_j$ is the triple formed by adding $a_j$ to the non-subscript part of each non-infinite element of $S_j$,  and  $\infty_i +a_j =  \infty_i$  for any $i \in \{1,2, \ldots, n-2x\}$, $j \in \{1,2, \ldots, n\}$.
\end{enumerate}
\end{definition}

Note that if $G$ is not abelian, the element $a_j$ would be added on the right in each case.  However, in this paper we will always take the group $G$ to be $\mathbb{Z}_{n+x}$.  

Transitive starters and adders can be used to construct $\ghd(n+x,3n)$s in the following manner: label the rows and columns of the $(n+x) \times (n+x)$ array by the elements of $G$, 
then in row $i$, place the triple $S_j+i$ in the column labelled as $i-a_j$.  Thus, the first row of the array contains the blocks of the starter $\mathcal{ST}$, with 
block $S_j$ in column $-a_j$; by the definition of a starter, these blocks form a resolution class.  The remaining rows consist of translates of the first, with their 
positions shifted, so that the first column contains the blocks $S_j+a_j$ for $1 \leq j \leq n$.  By the definition of an adder, these blocks are pairwise disjoint and 
thus also form a resolution class.  The remaining columns are translates of the first.  By construction, the two resolutions (into rows and into columns) are orthogonal.

Note also that the infinite points used in a transitive starter-adder construction for a $\ghd(n+x,3n)$ form a pairwise hole of size $n-2x = 3n - 2(n+x)$.
Therefore any $\ghd(n+x,3n)$ obtained from a transitive starter-adder constrution possesses the $*$-property. We thus have the following theorem:

\begin{theorem}
If there exists a transitive starter $\mathcal{ST}$ and a corresponding adder $A$ for a $\ghd(n+x,3n)$  then there exists a $\ghd^*(n+x,3n)$.
\end{theorem}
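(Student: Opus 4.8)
The plan is to take the $\ghd(n+x,3n)$ already produced by the translation construction described immediately before the theorem and to exhibit inside it an explicit pairwise hole of the size demanded by the definition of the $*$-property, namely the set of infinite points. Since that discussion already explains why a transitive starter-adder yields a genuine $\ghd(n+x,3n)$ (row resolutions because the starter $\mathcal{ST}$ partitions $X$, column resolutions because the adder makes the blocks $S_j+a_j$ partition $X$, and the packing condition because properties~(2) of Definition~\ref{Starter-Adder Definition} force every pure and every mixed difference to occur at most once), I would first simply record that the construction delivers a valid $\ghd(n+x,3n)$ by appealing to that paragraph, and then devote the argument to the one new claim, the $*$-property.

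To locate the hole, write $Y=\{\infty\}\times\{1,2,\ldots,n-2x\}$ for the set of infinite points and recall that the block set of the GHD consists of all translates $S_j+t$ with $j\in\{1,\ldots,n\}$ and $t\in G$. By property~(3) of Definition~\ref{Starter-Adder Definition}, each $\infty_i$ lies in a single starter triple $S_{j(i)}=\{\infty_i, g_0, h_1\}$ whose remaining two entries are finite. Because $\infty_i+a=\infty_i$ for every $a\in G$, the point $\infty_i$ occurs exactly in the translates $S_{j(i)}+t$ with $t\in G$, and in none of these does it meet a second infinite point. Consequently no pair of points of $Y$ ever appears together in a block, so $Y$ is a pairwise hole of the GHD.

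It remains to match sizes. A $\ghd^*(s,v)$ with $k=3$ is required to carry a pairwise hole of size $v-s(k-1)=v-2s$; substituting $s=n+x$ and $v=3n$ gives $3n-2(n+x)=n-2x$, which is precisely $|Y|$. Thus the design just constructed is a $\ghd^*(n+x,3n)$, as required.

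As for the main obstacle: there is little genuine difficulty, since both the construction and the key observation about the infinite points are essentially set up in the preceding paragraphs. The only content requiring real care is the verification of the packing condition across all types of pair, namely pairs of finite points with equal subscripts (controlled by the pure $(0,0)$ and $(1,1)$ differences), pairs of finite points with distinct subscripts (controlled by the mixed $(0,1)$ differences), pairs of the form $\{\infty_i,\text{finite}\}$, and pairs of two infinite points; the crucial point is that each infinite point is fixed by the adder action, so it contributes one pair with each finite point and never a pair with another infinite point. Once that is in hand, the remaining step of comparing $n-2x$ with $v-s(k-1)$ is purely arithmetic.
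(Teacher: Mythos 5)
Your proof is correct and follows essentially the same route as the paper: the paper's justification for this theorem is exactly the preceding discussion of the translation construction, together with the observation that the infinite points form a pairwise hole of size $n-2x = 3n-2(n+x)$, which is the size required for the $*$-property. Your write-up merely makes explicit the (correct) reason why no two infinite points can meet in a block --- each starter triple contains at most one $\infty_i$ and translation fixes infinite points --- a detail the paper leaves implicit.
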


\begin{example} \label{SAexample8}
For $s=10$, $v=24$, the following is a transitive starter and adder using $(\mathbb{Z}_{10}\times\{0,1\}) \cup \{\infty_0,\infty_1,\infty_2, \infty_3 \}$.  (The terms in square brackets give the adder.)
\[
\begin{array}{@{}*{4}l@{}}
0_{1} 6_{1} 7_{1}  [2] & 4_{1} 2_{1} 7_{0}  [3] & 6_{0} 8_{0} 8_{1}  [8] & 0_{0} 1_{0} 4_{0}  [7] \\
 \infty_0 2_{0} 3_{1}  [1] & \infty_1 5_{0} 9_{1}  [4] & \infty_2 3_{0} 1_{1}  [9] & \infty_3 9_{0} 5_{1}  [6] 
\end{array}
\]
Using this starter and adder, we obtain the following $\ghd^*(10,24)$.
\renewcommand{\arraystretch}{1.25}
\[
\arraycolsep 3.2pt
\begin{array}{|c|c|c|c|c|c|c|c|c|c|} \hline
& \infty_2 3_0 1_1 & 6_0 8_0 8_1 & 0_0 1_0 4_0 & \infty_3 9_0 5_1 & & \infty_1 5_0 9_1 & 4_1 2_1 7_0 & 0_1 6_1 7_1 & \infty_0 2_0 3_1 \\ \hline
\infty_0 3_0 4_1 & & \infty_2 4_0 2_1 & 7_0 9_0 9_1 & 1_0 2_0 5_0 & \infty_3 0_0 6_1  & & \infty_1 6_0 0_1 & 5_1 3_1 8_0 & 1_1 7_1 8_1 \\ \hline
2_1 8_1 9_1 & \infty_0 4_0 5_1 & & \infty_2 5_0 3_1 & 8_0 0_0 0_1 & 2_0 3_0 6_0  & \infty_3 1_0 7_1 & & \infty_1 7_0 1_1 & 6_1 3_1 9_0 \\ \hline
7_1 4_1 0_0 & 3_1 9_1 0_1 & \infty_0 5_0 6_1 & & \infty_2 6_0 4_1 & 9_0 1_0 1_1 & 3_0 4_0 7_0 & \infty_3 2_0 8_1 & &\infty_1 8_0 2_1 \\ \hline
\infty_1 9_0 3_1  & 8_1 5_1 1_0 & 4_1 0_1 1_1 & \infty_0 6_0 7_1 & & \infty_2 7_0 5_1 & 0_0 2_0 2_1 & 4_0 5_0 8_0 & \infty_3 3_0 9_1 & \\ \hline
 & \infty_1 0_0 4_1 & 9_1 6_1 2_0 & 5_1 1_1 2_1 & \infty_0 7_0 8_1 & & \infty_2 8_0 6_1 & 1_0 3_0 3_1 & 5_0 6_0 9_0 & \infty_3 4_0 0_1 \\ \hline
\infty_3 5_0 1_1 & & \infty_1 1_0 5_1 & 0_1 7_1 3_0 & 6_1 2_1 3_1 & \infty_0 8_0 9_1  & & \infty_2 9_0 7_1 & 2_0 4_0 4_1 & 6_0 7_0 0_0 \\ \hline
7_0 8_0 1_0 & \infty_3 6_0 2_1 & & \infty_1 2_1 6_0 & 1_1 8_1 4_0 & 7_1 3_1 4_1 & \infty_0 9_0 0_1 & & \infty_2 0_0 8_1 & 3_0 5_0 5_1 \\ \hline 
4_0 6_0 6_1 & 8_0 9_0 2_0 & \infty_3 7_0 3_1 & & \infty_1 3_1 7_0 & 2_1 9_1 5_0 & 8_1 4_1 5_1 & \infty_0 0_0 1_1 & & \infty_2 1_0 9_1 \\ \hline
\infty_2 2_0 0_1 & 5_0 7_0 7_1 & 9_0 0_0 3_0 & \infty_3 8_0 4_1 & & \infty_1 4_1 8_0 & 3_1 0_1 6_0 & 9_1 5_1 6_1 & \infty_0 1_0 2_1 & \\ \hline
\end{array} 
\]
\renewcommand{\arraystretch}{1.0}
\end{example}

Another type of starter-adder is an {\em intransitive starter-adder}. This term is defined below.

\begin{definition} \label{IntStarter-Adder Definition} 
Let $(G,+)$ be a group of order $n$, and  let $X=(G \times\{0,1,2\})$.
An {\em intransitive starter-adder} for a $\ghd(n+x,3n)$ consists of  a collection  $\mathcal{ST}$  of $n+x$ triples, called an {\em intransitive starter},
and a set $A=\{a_1, a_2, \ldots, a_{n-x}\}$  of elements of $G$,   called an {\em adder},  with the following properties:
\begin{enumerate}
\item $\mathcal{ST}$ can be partitioned into three sets $S$, $R$ and $C$ of sizes $n-x$, $x$ and $x$ respectively. We write  $S=\{S_1, S_2, \ldots, S_{n-x}\}$,
   $R=\{R_1, R_2, \ldots, R_{x}\}$ and  $C=\{C_1, C_2, \ldots, C_{x}\}$.
\item $S \cup R$ is a partition of $X$.
\item For each $i \in \{0,1,2\}$,  each element of $G$  occurs at most once as a pure $(i,i)$ difference
within a triple of $\mathcal{ST}$.   Also for each pair $(i,j) \in \{ (0,1), (0,2), (1,2)\}$, each element of $G$  occurs at most once as a mixed $(i,j)$  difference 
within a triple of $\mathcal{ST}$.  If $|G|$ is even, no pure $(i,i)$ difference $g$ with order 2 in $G$ occurs within these triples.
 \item If $S + A$ is the set of blocks   $\{S_1 + a_1, S_2+ a_2, \ldots, S_{n-x} + a_{n-x}\}$   then  $(S + A) \cup C$ is a partition of $X$.
  As before, $S_j + a_j$ denotes the block obtained by adding $a_j$ to the non-subscript part  of all elements of $S_j$.
\end{enumerate}
\end{definition}

To obtain a   $\ghd(n+x, 3n)$  from such an intransitive starter-adder, we proceed as follows. Let $\mathcal{G}$  be the required GHD, and let its the top 
left $n \times n$ subarray be $\mathcal{H}$.    Label the  first $n$ rows and columns of $\cal{G}$  by the elements of $G$.    
For $i \in G$, we then place the block $S_j + i$ in the $(i, i-a_j)$ cell of $\mathcal{H}$.

In the top row of $\mathcal{G}$ and the final  $x$ columns  (which we label as $n+1, n+2, \ldots, n+x$),  place the blocks $R_1, R_2, \ldots, R_{x}$ from $R$; then 
for $i \in G$, and $j=1,2, \ldots, x$, place $R_j + i$ in the $(i, n+j)$ cell of $\mathcal{G}$. Similarly, label the last $x$ rows of $\mathcal{G}$ as  $n+1, n+2, \ldots, n+x$.  
In the initial column and last $x$ rows of $\mathcal{G}$ we place the blocks $C_1, C_2, \ldots, C_{x}$ from $C$, and for $i \in G$, $j=1,2, \ldots, x$, we 
place $C_j + i$ in the $(n+j, i)$ cell of $\mathcal{G}$.  The bottom right $x \times x$ subarray of $\mathcal{G}$ is always an empty subarray.

\begin{theorem}
If there exists an intransitive starter $\mathcal{ST}$ and a corresponding adder $A$ for a $\ghd(n+x,3n)$  then there exists a $\ghd(n+x,3n)$ with an empty $x \times x$ sub-array.
\end{theorem}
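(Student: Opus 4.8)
The construction is already laid out just before the statement, so the plan is to verify directly that the array $\mathcal{G}$ produced from the intransitive starter $\mathcal{ST}$ and adder $A$ satisfies the two defining axioms of a $\ghd(n+x,3n)$ and has the claimed empty $x\times x$ corner. The first thing I would record is that the full multiset of blocks placed in $\mathcal{G}$ is exactly the $G$-development $\{B+i : B\in\mathcal{ST},\ i\in G\}$ of the starter: the blocks $S_j+i$ fill part of $\mathcal{H}$, the blocks $R_j+i$ fill the top-right $n\times x$ strip, and the blocks $C_j+i$ fill the bottom-left $x\times n$ strip, each block of $\mathcal{ST}$ being developed over all of $G$. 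I would then check that distinct pairs $(B,i)$ land in distinct cells (using that the adder entries $a_1,\dots,a_{n-x}$ are distinct for the cells of $\mathcal{H}$, and that the strips are indexed injectively by $(i,j)$), so that each cell carries at most one block while the bottom-right $x\times x$ subarray is left empty by construction. A short count confirms there are $n(n+x)$ blocks, i.e.\ exactly $n$ per line.

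Next I would verify the resolution axiom (each symbol once per row and column). For a row indexed by $i\in G$ the blocks present are $(S\cup R)+i$, which is a partition of $X$ because $S\cup R$ is (condition~2 of the intransitive starter-adder definition) and translation by $i$ is a bijection of $X$ preserving the partition property; dually, for a column indexed by $c\in G$ the blocks are $\big((S+A)\cup C\big)+c$, a partition of $X$ by condition~4. For each of the $x$ extra rows (resp.\ columns) the entries form the full $G$-orbit $\{C_j+i:i\in G\}$ of a single block $C_j$ (resp.\ $\{R_j+i:i\in G\}$ of $R_j$); since such a block meets each of the three copies $G\times\{0\}$, $G\times\{1\}$, $G\times\{2\}$ exactly once, its translation orbit covers every symbol of $X$ exactly once, giving a resolution. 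This transversality of the blocks of $R$ and $C$ is the one point I would make sure to pin down from the setup, since it is precisely what makes the single-block developments resolve.

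The main work, and the step I expect to be the real obstacle, is the packing axiom: no pair of symbols of $X$ lies in more than one cell. Because every block is a $G$-translate of a block of $\mathcal{ST}$, a pair $\{g_a,h_b\}$ occurs in the development precisely as many times as the corresponding difference occurs among the triples of $\mathcal{ST}$ --- a pure $(a,a)$-difference $g-h$ when $a=b$, and a mixed $(a,b)$-difference when $a\neq b$. The plan is to make this correspondence exact: translation by $i$ sends a block containing $\{u_a,w_b\}$ to one containing $\{(u+i)_a,(w+i)_b\}$, so a fixed unordered pair is realized by a fixed difference (up to sign in the pure case and up to the ordered convention in the mixed case). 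Condition~3 bounds each pure and each mixed difference by a single occurrence across $\mathcal{ST}$, which forces each pair into at most one block, hence at most one cell. The only delicate case is an element $g-h$ of order $2$ in an even-order group, where $g-h=-(g-h)$ would let a single block realize the same pure pair twice within its orbit; this is exactly why condition~3 excludes order-$2$ pure differences, and I would treat this case explicitly. Assembling these verifications yields that $\mathcal{G}$ is a $\ghd(n+x,3n)$ whose bottom-right $x\times x$ subarray is empty, as required.
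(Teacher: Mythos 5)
Your proposal follows the only possible route here, and it is also the paper's route: the paper states this theorem with no proof at all, the construction paragraph immediately preceding it being the entire argument. Your explicit verification---the placement map and the empty $x\times x$ corner, the rows and columns indexed by $G$ via conditions 2 and 4, and the pair condition via the difference conditions in condition 3, including the order-$2$ caveat for pure differences in groups of even order---supplies exactly what the paper leaves implicit, and those parts are correct.

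However, there is one genuine gap, and it sits precisely at the point you flag: the claim that each block of $R$ and of $C$ is a transversal of the three copies $G\times\{0\}$, $G\times\{1\}$, $G\times\{2\}$. You write that you would ``pin this down from the setup,'' but it cannot be pinned down: transversality is not among conditions 1--4 of the intransitive starter--adder definition, and it does not follow from them in any evident way. Conditions 2 and 4 constrain $R$ and $C$ only through the two partition properties; from these one can deduce that the subscript counts covered by $R$ in total equal those covered by $C$ in total (translation preserves subscripts), but not that each individual block meets each copy exactly once. Yet the resolution property of the $x$ added rows and columns is exactly equivalent to this transversality, as your own orbit computation shows: a block $R_j$ with subscript profile $(r_0,r_1,r_2)$ develops into a column in which each symbol of $G\times\{a\}$ appears $r_a$ times, so any non-transversal block of $R$ or $C$ destroys the design. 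As written, then, the key step of your proof invokes a hypothesis that the stated definition does not supply. To be fair, this is a defect of the paper's statement as much as of your argument: every intransitive starter--adder exhibited in the paper does have transversal $R$- and $C$-blocks (all are of the form $\{g_0,h_1,k_2\}$), and the intended theorem needs transversality added as an explicit hypothesis or folded into the definition. Your proof becomes complete once that hypothesis is stated; without it, neither your argument nor the paper's establishes the statement.
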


\begin{example} \label{SAexample7}
For $n=7$, $x=2$, the following is an intransitive starter-adder over  $(\mathbb{Z}_{7} \times\{0,1,2\})$ for a $\ghd^*( 9,21)$.  (In square brackets 
we either give the adder for the corrresponding starter block, or indicate whether it belongs to $R$ or $C$.)
\[
\begin{array}{@{}*{5}l@{}}
2_{2} 3_{2} 5_{2}  [0] & 0_{0} 1_{0} 3_{0}  [6] & 6_{1} 0_{1} 2_{1}  [1] & 6_{0} 3_{1} 4_{2}  [2] & 5_0 1_{1} 6_{2}  [5]  \\
 4_0 5_{1} 0_{2}  [R] &  5_{0} 4_{1} 0_2 [C] & 2_0 4_1 1_2 [R] & 4_0 2_{1} 1_{2}  [C]
\end{array}
\]
 
Using this starter and adder, we obtain the following $\ghd^*( 9,21)$.
\renewcommand{\arraystretch}{1.25}
\[
\arraycolsep 4.5pt
\begin{array}{|c|c|c|c|c|c|c||c|c|} \hline
  2_2 3_2 5_2 & 0_0 1_0 3_0 & 5_0 1_1 6_2 &              &              &  6_0 3_1 4_2 & 6_1 0_1 2_1 & 4_0 5_1 0_2 &  2_0 4_1 1_2 \\ \hline
  0_1 1_1 3_1 & 3_2 4_2 6_2 & 1_0 2_0 4_0 & 6_0 2_1 0_2  &              &              & 0_0 4_1 5_2 & 5_0 6_1 1_2 &  3_0 5_1 2_2 \\ \hline
  1_0 5_1 6_2 & 1_1 2_1 4_1 & 4_2 5_2 0_2 & 2_0 3_0 5_0  & 0_0 3_1 1_2  &              &             & 6_0 0_1 2_2 &  4_0 6_1 3_2 \\ \hline
              & 2_0 6_1 0_2 & 2_1 3_1 5_1 & 5_2 6_2 1_2  & 3_0 4_0 6_0  & 1_0 4_1 2_2  &             & 0_0 1_1 3_2 &  5_0 0_1 4_2 \\ \hline
              &             & 3_0 0_1 1_2 & 3_1 4_1 6_1  & 6_2 0_2 2_2  & 4_0 5_0 0_0  & 2_0 5_1 3_2 & 1_0 2_1 4_2 &  6_0 1_1 5_2 \\ \hline
  3_0 6_1 4_2 &             &             & 4_0 1_1 2_2  & 4_1 5_1 0_1  & 0_2 1_2 3_2  & 5_0 6_0 1_0 & 2_0 3_1 5_2 &  0_0 2_1 6_2 \\ \hline
  6_0 0_0 2_0 & 4_0 0_1 5_2 &             &              & 5_0 2_1 3_2  & 5_1 6_1 1_1  & 1_2 2_2 4_2 & 3_0 4_1 6_2 &  1_0 3_1 0_2 \\ \hline \hline
  5_0 4_1 0_2 & 6_0 5_1 1_2 & 0_0 6_1 2_2 & 1_0 0_1 3_2  & 2_0 1_1 4_2  & 3_0 2_1 5_2  & 4_0 3_1 6_2 &             &              \\ \hline
  4_0 2_1 1_2 & 5_0 3_1 2_2 & 6_0 4_1 3_2 & 0_0 5_1 4_2  & 1_0 6_1 5_2  & 2_0 0_1 6_2  & 3_0 1_1 0_2 &             &              \\ \hline
\end{array} 
\]
\renewcommand{\arraystretch}{1.0}
\end{example}

We point out that the underlying block design for this GHD is a $(3,1)$-GDD of type $3^7$ with groups $\{t_0, t_1, t_2\}$ for $t \in \mathbb{Z}_7$.  Since it has a pairwise hole 
of size $3$, this GHD also has the $*$-property. However, GHDs obtained by the intransitive starter-adder method usually do not possess the $*$-property.

  In all examples of $\ghd(n+x,3n)$s obtained by an intransitive starter-adder  in this paper, the group $G$ will be taken as $Z_n$. Also,  
  the underlying block design for any $\ghd(n+x,3n)$ that we construct in this way  will have (in addition to the obvious automorphism of order $n$) an automorphism of order $2$.
 This automorphism maps the point $t_0$ to $t_1$, $t_1$ to $t_0$ and $t_2$ to $t_2$. It also maps any starter block with adder $a$ to the 
 starter + adder block with adder $-a$, and each block in $R$ to a block in $C$. For instance, in the previous example, the starter block $\{6_0, 3_1, 4_2\}$ 
 is mapped to the starter + adder block $\{3_0, 6_1, 4_2\} = \{5_0, 1_1, 6_2\} + 5$. 

\subsection{Designs with the $*$-property}

The following lemma generalizes constructions of Stinson~\cite{Stinson} and Vanstone~\cite{Vanstone 80} for $\ghd^*$s.  In~\cite{Stinson}, the result is given only for block 
size $k=2$, while in~\cite{Vanstone 80}, it is done for general block size in the case that $g=1$.
 
\begin{lemma}
\label{Stinson 1}
Let $g$ be a positive integer, and for each $i \in \{1,2,\ldots,g\}$, let $u_i$ be a non-negative integer.  Suppose that there exists a $\mathrm{PBD}(v,K,1)$, $(X, \cB)$, containing $g$ (not 
necessarily disjoint) resolution classes, $P_1, P_2, \ldots, P_g$.  Moreover, suppose that for every block $B\in \cB$, there exists a $\ghd_k^*(|B|, (k-1)|B|+1+u_B)$, 
where $u_B = \sum_{\{i\mid B\in P_i\}} u_i$.  Then there exists a $\ghd_k^*(v, (k-1)v+u+1)$, where $u = \sum_{i=1}^g u_i$.
\end{lemma}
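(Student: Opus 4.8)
The plan is to build the design directly by filling in the PBD, generalizing the superposition arguments of Stinson and Vanstone. Let $(X,\cB)$ be the given $\pbd(v,K,1)$ and index the rows and columns of a $v\times v$ array by the points of $X$. I take the point set to be $(X\times\{1,\ldots,k-1\})\cup\{\infty\}\cup\bigcup_{i=1}^g U_i$, where $|U_i|=u_i$ and $Y=\{\infty\}\cup\bigcup_i U_i$ is to be the required pairwise hole of size $u+1$; I regard $\{x\}\times\{1,\ldots,k-1\}$ as a single \emph{fat point} attached to $x\in X$.

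First I would treat the diagonal globally: in each cell $(x,x)$ place the single block $\{(x,1),\ldots,(x,k-1),\infty\}$. This puts $\infty$ exactly once in every row and column and covers every pair inside a fat point, and every pair $\{\infty,(x,a)\}$, exactly once. Next, for each $B\in\cB$ I take the guaranteed $\ghd_k^*(|B|,(k-1)|B|+1+u_B)$, whose hole is $\{\infty\}\cup U_B$ with $U_B=\bigcup_{i:\,B\in P_i}U_i$. Since $\infty$ occupies a transversal of cells, permuting its columns moves every occurrence of $\infty$ onto the diagonal; the $|B|$ blocks through $\infty$ are then forced (by the packing property) to partition the real points into $|B|$ groups of size $k-1$, and I relabel so that these become exactly the fat blocks $\{(x,1),\ldots,(x,k-1),\infty\}$, $x\in B$. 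I identify the real points, rows and columns of this small design with the fat points, rows and columns of $X$ indexed by $B$, delete its (now diagonal) blocks through $\infty$, and drop its off-diagonal cells into the big array. Because $\lambda=1$, each off-diagonal cell $(x,y)$ lies in the subarray of the \emph{unique} block through $x$ and $y$, so the small designs occupy pairwise disjoint sets of off-diagonal cells.

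It then remains to verify the two defining properties. Since each fat pair already appears on the diagonal and each small design is a packing, no off-diagonal block repeats a fat pair or uses $\infty$; a pair $\{(x,a),(y,b)\}$ with $x\ne y$ is covered only in the unique block through $x,y$; and a pair $\{(x,a),\omega\}$ with $\omega\in U_i$ is covered only in the design of the unique block $B_i(x)\in P_i$ through $x$, since $U_i$ occurs only in the designs of blocks of $P_i$. The same observation, using that each $P_i$ is a resolution class and so meets $x$ in exactly one block, shows that every point of each $U_i$, every fat point, and $\infty$ occur exactly once in each row and column, so each row and column is a resolution of the whole point set. No two hole points ever co-occur, so $Y$ is a pairwise hole of size $u+1=\big((k-1)v+u+1\big)-(k-1)v$, and the result is the desired $\ghd_k^*(v,(k-1)v+u+1)$.

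The crux of the argument, and the step I expect to be most delicate to write carefully, is the normalisation of the small designs: one must fix a single global fat-point labelling on $X\times\{1,\ldots,k-1\}$ and relabel each $\ghd_k^*(|B|,\dots)$ so that its $\infty$-blocks become the prescribed diagonal fat blocks simultaneously for all blocks $B$ sharing a point. This is consistent because the diagonal block at $(x,x)$ is the same $k$-set $\{(x,1),\ldots,(x,k-1),\infty\}$ from the viewpoint of every $B\ni x$, so overlapping small designs never disagree on the diagonal, while the residual freedom in labelling the copies of a fat point affects only off-diagonal cells, each controlled by a single block. Crucially, separating the universal hole point $\infty$ (handled once, on the diagonal) from the class-specific hole points $U_i$ (handled by the resolution classes $P_i$) is exactly what prevents the over-counting that a naive superposition of the small designs would otherwise produce.
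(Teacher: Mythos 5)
Your proposal is correct and takes essentially the same route as the paper: the same point set $(X\times\{1,\ldots,k-1\})\cup\{\infty\}\cup\bigcup_i U_i$, rows and columns indexed by $X$, the fat block $\{\infty,(x,1),\ldots,(x,k-1)\}$ placed in each diagonal cell, each off-diagonal cell $(x,y)$ filled from the small $\ghd_k^*$ of the unique block of $\cB$ containing $\{x,y\}$, and the same case analysis (fat pairs, hole pairs, mixed pairs, and row/column resolutions using uniqueness of the block of $P_i$ through a given point). The only difference is that you explicitly justify the normalization of each ingredient design --- that its blocks through $\infty$ form a parallel partition of the non-hole points and can be permuted and relabelled onto the diagonal as the prescribed fat blocks --- a step the paper asserts without proof; this is a welcome clarification rather than a different argument.
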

\begin{proof}
We construct the resulting $\ghd_k^*$, $\cA$, on point set $(X\times \Z_{k-1})\cup I\cup \{\infty\}$, where $I = \{ \infty_{ij} \mid 1\leq i \leq g,  1\leq j \leq u_i\}$, and index 
the rows and columns of $\cA$ by $X$. For each block $B\in \cB$ we define $I_B = \{\infty_{ij}\mid B\in P_i, 1\leq j \leq  u_i\}$ 
and construct a \mbox{$\ghd_k^*(|B|, (k-1)|B|+1+u_B)$,} $\cA_B$, indexed by $B$, with point set $(B\times\Z_{k-1})\cup I_B\cup \{\infty\}$ and pairwise hole $I_B \cup \{\infty\}$. 
In this $\ghd$, the block $\{\infty, (x,0), \ldots, (x,{k-2})\}$ should appear in the $(x,x)$-entry of  $\cA_B$  for all $x \in B$.
For each cell indexed by $(x,y)\in X^2$, if $x\neq y$ there is a block with $\{x,y\}\in B$ and we place the entry from $\cA_B$ indexed by $(x,y)$ in the cell of $\cA$ indexed by $(x,y)$.
For each $x\in X$, in the diagonal $(x,x)$-entry of $\cA$ we place the block $\{\infty, (x,0), \ldots, (x,{k-2})\}$.

We now show that the resulting $\cA$ is a $\ghd_k^*$, with pairwise hole $I\cup\{\infty\}$.
We first show that no pair appears more than once by considering a pair of points in $(X\times\Z_{k-1})\cup I\cup \{\infty\}$.
If $x,y\in I\cup\{\infty\}$, it is evident that none of the elements of $I\cup \{\infty\}$ appear together in a block of $\cA$ as the elements of this set are always in the 
pairwise holes of the $\cA_B$ from which the blocks of $\cA$ are drawn, nor do they appear together in the diagonal elements. 
We now consider $(x,a) \in X\times\Z_{k-1}$.
If $y = \infty_{ij}$ then there is a unique block $B$ which contains the point $x$ in $P_i$ and $(x,a)$ and $y$ cannot appear together more than once in $\cA_B$.  
If $y=\infty$, it appears with $(x,a)$ only on the diagonal.
Finally, if $(y,b)\in X\times\Z_{k-1}$, there is a unique block $B$ which contains $x$ and $y$ and $(x,a)$ and $(y,b)$ cannot appear together more than once in $\cA_B$.

We now show that the rows of $\cA$ are resolutions of $(X\times\Z_{k-1})\cup I\cup\{\infty\}$. Consider a row indexed by $x\in X$ and an element $\alpha\in (X\times \Z_{k-1})\cup I\cup\{\infty\}$. 
If $\alpha = (x,a)$ or $\alpha = \infty$, $\alpha$ appears as a diagonal entry. If $\alpha=(y,b)$, where $y\in X\setminus \{x\}$, find the block $B$ containing $\{x,y\}$. Now, in 
the row  indexed by $x$ in $\cA_B$ the element $\alpha$ must appear, say it appears in the column indexed by $z$, then the cell indexed by $(x,z)$ of $\cA$ will contain $\alpha$.
If $\alpha = \infty_{ij}\in I$, find the block $B$ of the resolution class $P_i$ which contains $x$.  As above, in the row indexed by $x$ in $\cA_B$ the element $\alpha$ must appear, say it 
appears in the column indexed by $z$, then the cell indexed by $(x,z)$ of $\cA$ will contain $\alpha$.
A similar argument shows that the columns are also resolutions of $X$.
\end{proof}

Note that the statement of Lemma~\ref{Stinson 1} does not require the $g$ resolution classes to be disjoint, or even distinct.  In practice, however, when applying Lemma~\ref{Stinson 1}, we 
will use resolvable pairwise balanced designs in the construction and take $P_1,P_2,\ldots,P_g$ to be the distinct classes of the resolution.  Thus, for any block $B$, $u_B$ will be $u_i$, 
where $P_i$ is the parallel class containing $B$.  In particular, we will use PBDs formed from resolvable transversal designs, so we record this case  for $k=3$ in the following lemma.

\begin{lemma}
\label{Stinson 1 TD}
Suppose there exists a $\rtd(n,g)$.  For $i \in \{1, 2, \ldots, g\}$, let $u_i$ be a non-negative integer such that there exists a $\ghd^*(n,2n+1+u_i)$, and let $u_{g+1}$ be a non-negative integer 
such that there exists a $\ghd^*(g,2g+1+u_{g+1})$.  Then there exists a $\ghd^*(ng,2ng+u+1)$, where $u=\sum_{i=1}^{g+1}u_i$.  
\end{lemma}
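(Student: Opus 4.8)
The plan is to derive Lemma~\ref{Stinson 1 TD} as a direct specialization of Lemma~\ref{Stinson 1}, choosing the PBD and its resolution classes appropriately. The key observation is that an $\rtd(n,g)$ is a resolvable transversal design: a $(g,1)$-GDD of type $n^g$ that admits a resolution. As noted earlier in the excerpt (in Section~2.1), adding the $g$ groups as blocks turns a transversal design into a $\pbd(ng,\{g,n\})$. Each group has size $n$ (there are $g$ groups), and each block has size $g$ (there are $n^2$ such blocks, since the $\rtd(n,g)$ has $n$ resolution classes each containing $n$ blocks). So the PBD has point set of size $v=ng$, block sizes drawn from $K=\{g,n\}$, and $\lambda=1$.

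First I would make precise which resolution classes to feed into Lemma~\ref{Stinson 1}. The resolution of the $\rtd(n,g)$ provides $n$ parallel classes $P_1,\dots,P_n$ of the size-$g$ blocks. I would additionally take the single class consisting of the $g$ groups themselves: since the groups partition the point set, they form one more resolution class, which I will call $P_{g+1}$ (renaming to match the indexing in the statement). Thus I apply Lemma~\ref{Stinson 1} with the $n$ resolution classes of size-$g$ blocks each assigned weight $u_i$ ($i=1,\dots,n$) — wait, the statement assigns weights $u_1,\dots,u_g$ to the groups and $u_{g+1}$ to something else, so I must align the bookkeeping carefully. Reading the statement, the $u_i$ for $i\in\{1,\dots,g\}$ are attached to the $g$ \emph{groups} (each a block of size $n$, requiring a $\ghd^*(n,2n+1+u_i)$), while $u_{g+1}$ is attached to the $n$ blocks of size $g$ (each requiring a $\ghd^*(g,2g+1+u_{g+1})$). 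So in the language of Lemma~\ref{Stinson 1}, the $g$ groups are taken as $g$ distinct resolution classes (each a single block), and the $n$ parallel classes of the resolution are all assigned the common weight $u_{g+1}$.

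The verification then reduces to matching parameters. The total weight is $u=\sum_{i=1}^{g}u_i + n\cdot u_{g+1}$; but the statement writes $u=\sum_{i=1}^{g+1}u_i$, so there is a slight abuse where $u_{g+1}$ is counted once rather than $n$ times — I would need to check whether the intended reading is that all $n$ blocks of size $g$ lie in parallel classes sharing one weight but contributing collectively, or whether the sum is genuinely over $g+1$ terms. For each block $B$ of size $g$ (i.e.\ a block of the transversal design), $u_B$ equals the weight of its containing parallel class, namely $u_{g+1}$, so the hypothesis $\ghd^*(g,2g+1+u_{g+1})$ is exactly what Lemma~\ref{Stinson 1} demands. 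For each group (block of size $n$, lying in exactly one class $P_i$), $u_B=u_i$, matching the hypothesis $\ghd^*(n,2n+1+u_i)$. Lemma~\ref{Stinson 1} then outputs a $\ghd^*(v,2v+u+1)$ with $v=ng$, which is the claimed $\ghd^*(ng,2ng+u+1)$.

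The main obstacle I expect is purely the bookkeeping of the weight sum $u$: reconciling the single index $u_{g+1}$ in the statement with the fact that it decorates $n$ separate parallel classes in the underlying PBD, and confirming that Lemma~\ref{Stinson 1} — which sums $u_i$ over resolution classes — produces the advertised total. I would resolve this by treating the $n$ parallel classes of size-$g$ blocks as carrying a shared weight but, since each point of the hole $I$ is indexed by a \emph{single} class, ensuring the infinite points are not overcounted; the cleanest fix is to view the $n$ resolution classes together as contributing a pool of $u_{g+1}$ new infinite points (one class's worth), so that $u=\sum_{i=1}^{g}u_i+u_{g+1}$ as written. Everything else — the pairwise-hole structure, the resolvability of rows and columns, the $*$-property of the output — is inherited verbatim from Lemma~\ref{Stinson 1} and requires no further argument.
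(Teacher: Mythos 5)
Your overall plan---turn the $\rtd(n,g)$ into a resolvable $\pbd(ng,\{n,g\},1)$ by adjoining the groups as one extra parallel class and then invoke Lemma~\ref{Stinson 1}---is exactly the paper's proof. However, your execution contains a genuine error: you have transposed the parameters of the transversal design. In this paper (see the definitions of $\mathrm{T}(k,n)$ and $\mathrm{RT}(k,n)$ in Section~2.1), the \emph{first} parameter is the block size and the second is the group size, so an $\rtd(n,g)$ has blocks of size $n$, $n$ groups of size $g$, and a resolution into $g$ parallel classes of size-$n$ blocks ($g^2$ blocks, $g$ per class); the groups supply one further parallel class $P_{g+1}$ of $n$ blocks of size $g$. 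Read this way, the statement parses with no friction at all: the $g+1$ classes $P_1,\ldots,P_{g+1}$ carry weights $u_1,\ldots,u_{g+1}$, every block $B\in P_i$ with $i\leq g$ has $u_B=u_i$ and needs the hypothesized $\ghd^*(n,2n+1+u_i)$, every group has $u_B=u_{g+1}$ and needs the hypothesized $\ghd^*(g,2g+1+u_{g+1})$, and Lemma~\ref{Stinson 1} outputs a $\ghd^*(ng,2ng+u+1)$ with $u=\sum_{i=1}^{g+1}u_i$, each class's weight counted exactly once. Because you instead read $\rtd(n,g)$ as a $(g,1)$-GDD of type $n^g$, you were forced to treat the $g$ groups as ``$g$ distinct resolution classes (each a single block)''---which is impossible, since a resolution class must partition the whole point set---and your weight count came out as $\sum_{i=1}^{g}u_i+n\cdot u_{g+1}$, a discrepancy you correctly noticed but misdiagnosed as an abuse of notation in the statement.

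The patch you then propose---letting the $n$ parallel classes of size-$g$ blocks share a single common pool of $u_{g+1}$ infinite points---is not an application of Lemma~\ref{Stinson 1} and in fact destroys the construction. In the proof of Lemma~\ref{Stinson 1}, the reason an infinite point $\infty_{ij}$ occurs exactly once in the row of $\cA$ indexed by $x$ is that $x$ lies in a \emph{unique} block of the single class $P_i$ to which $\infty_{ij}$ is attached. If an infinite point $\infty_j$ were attached to two distinct classes $P_i$ and $P_{i'}$, then the blocks $B\in P_i$ and $B'\in P_{i'}$ containing $x$ satisfy $B\cap B'=\{x\}$, and $\infty_j$ would appear once in row $x$ of $\cA_B$, in a cell $(x,z)$ with $z\in B\setminus\{x\}$, and once in row $x$ of $\cA_{B'}$, in a cell $(x,z')$ with $z'\in B'\setminus\{x\}$; since $z\neq z'$, the point $\infty_j$ occurs twice in row $x$ of the final array, violating the resolution property (condition (i) of the GHD definition). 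Once the notation is read correctly there is nothing to repair, and the lemma is, as the paper says, a clear application of Lemma~\ref{Stinson 1}.
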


\begin{proof}
Beginning with the $\rtd(n,g)$, construct a resolvable $\mathrm{PBD}(ng,\{n,g\},1)$ whose blocks are those of the resolvable transversal design together with a single parallel class whose 
blocks are its groups.  Note that the resolution of this design consists of $g$ parallel classes, say $P_1, P_2, \ldots, P_g$, consisting of blocks of size $n$, and one parallel 
class, $P_{g+1}$, consisting of blocks of size $g$.  The result is now a clear application of Lemma~\ref{Stinson 1}.  
\end{proof}

\section{GHDs with one or two empty cells in each row and column} \label{TwoEmptySection}

\subsection{Existence of $\ghd(n+1,3n)$} \label{t=1 section}

In~\cite{WangDu}, Wang and Du  asserted the existence of $\ghd(n+1,3n)$ for all $n \geq 7$, with at most five possible exceptions.  
However, there are  issues with some of the constructions in their paper, in particular the modified starter-adder constructions for GHD$(n+1,3n)$ with $n$ even and $10 \leq n \leq 36$.
 These constructions contained at least one starter block such that one infinite point was added to half its translates (i.e $(n+1)/2$ of them) and another infinite point was 
added to the other $(n+1)/2$ translates. However this procedure is not possible for $n$ even, since $(n+1)/2$ is then not an integer. In addition, many of their  recursive 
constructions later rely on existence of some of these faulty designs.  We thus prove an existence result for $\ghd(n+1,3n)$ here, namely Theorem~\ref{Existence_t=1}, which has no exceptions for $n \geq 7$.  Note that not all of the constructions in~\cite{WangDu} are problematic, and we will quote some results from that paper as part of our proof. 

\begin{theorem} \label{Existence_t=1}
Let $n$ be a positive integer.  There exists a $\ghd(n+1,3n)$ if and only if $n \geq 6$, except possibly for $n =6$.
\end{theorem}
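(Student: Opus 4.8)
The plan is to establish Theorem~\ref{Existence_t=1} by first disposing of the necessity and small-order obstructions, and then constructing $\ghd(n+1,3n)$ for all $n \geq 7$ via a combination of direct starter-adder constructions for small $n$ and recursive frame constructions for larger $n$. On the necessity side, the bound $\frac{v}{3} \leq s \leq \frac{v-1}{2}$ with $v=3n$, $s=n+1$ forces $n+1 \leq \frac{3n-1}{2}$, i.e.\ $n \geq 3$; sharper arguments (counting pairs within the single empty cell per row/column, as in the known nonexistence results for small Howell-type designs) must rule out $n \in \{3,4,5\}$, leaving $n=6$ as the sole genuinely open case and $n \geq 7$ as the target. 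I would quote whichever of these small nonexistence facts are already available in the literature rather than reprove them.

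\textbf{Base cases via starter-adders.} For the small admissible orders I would construct explicit $\ghd(n+1,3n)$ using the transitive and intransitive starter-adder machinery of Section~\ref{SA section} (Definitions~\ref{Starter-Adder Definition} and~\ref{IntStarter-Adder Definition}), exactly in the style of Examples~\ref{SAexample8} and~\ref{SAexample7}. Here $x=1$, so the infinite-point set has size $n-2$ and one works over $\mathbb{Z}_{n+1}$. The key point flagged in the text is that the erroneous Wang--Du constructions fail precisely for even $n$ with $10 \leq n \leq 36$ because they split a starter block's translates into two halves of size $(n+1)/2$, which is non-integral when $n$ is even. I would therefore supply corrected starters for these even values (and verify the odd values), checking that (a) the starter partitions $X$, (b) every pure and mixed difference occurs at most once, and (c) the adder yields a second partition. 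Since $x=1$ the resulting designs automatically carry the $*$-property, which will be convenient for feeding them into the recursions.

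\textbf{Recursive step and the main obstacle.} For larger $n$ the plan is to apply the Basic Frame Construction, Theorem~\ref{Frame Construction}, and its Corollary~\ref{FrameCorollary} with $e=1$: starting from a $\ghf_3$ of type $s_1 s_2 \cdots s_m$ (with $g_i = 3 s_i$), one fills in a $\ghd(s_i+1, 3s_i)$ over each diagonal block, using a trivial $\ghd(1,0)$ as the shared hole, to obtain a $\ghd(s+1, 3s)$ where $s = \sum s_i$. The frames themselves come from Theorem~\ref{Uniform Frames} (uniform type $h^u$) and from Lemmas~\ref{n=38 lemma}, \ref{n=68 lemma} and~\ref{Frame 7^uv} (types $h^m v^1$ and $h^{m-1}(m+w)^1$). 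Thus if every ingredient $\ghd(s_i+1, 3s_i)$ exists, so does the target. The \textbf{main obstacle} is the usual bootstrapping/covering problem: one must choose the frame parameters so that every residue class of $n+1$ is reached while keeping all component orders $s_i$ within the already-established range, and simultaneously ensure the needed frames actually exist (Theorem~\ref{Uniform Frames} has possible exceptions at $(h,u) \in \{(1,10),(3,28),(6,18)\}$, and Lemma~\ref{Frame 7^uv} restricts $m \notin \{6,10,22\}$ and bounds $v$).

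\textbf{Assembling the induction.} Concretely, I would treat $n$ by residue modulo a suitable modulus (the frame types of the form $h^m$ and $h^m v^1$ with $h \in \{6,7,8,9,11\}$ naturally suggest working modulo $6$ or $7$ and using the flexible tail $v$ to interpolate), prove the statement for a finite block of small $n$ directly by starter-adders, and then show that each larger $n$ decomposes via Corollary~\ref{FrameCorollary} into smaller already-handled orders. The induction closes once the base block is long enough that every residue is seeded and every required frame lies in the guaranteed range of Theorem~\ref{Uniform Frames} and Lemma~\ref{Frame 7^uv}. Throughout, I would lean on Theorem~\ref{618} (existence of $\ghd(n,3n)$, the ``no empty cell'' case) and Theorem~\ref{GHD*}(ii) as auxiliary ingredients, since a $\ghd(s_m+1,3s_m)$ or a SOMA-derived design may be the cleanest filler for the final group $G_m$. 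The net effect is that the only value not resolved by this scheme is $n=6$, matching the statement, with the small faulty even orders from~\cite{WangDu} repaired by the corrected direct constructions.
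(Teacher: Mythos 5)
Your proposal follows essentially the same route as the paper's proof: nonexistence for $n\le 5$ quoted from the literature, explicit transitive and intransitive starter-adders (correcting the faulty even-$n$ Wang--Du constructions) for the small orders, and then the Basic Frame Construction with $e=1$ applied to frames of type $h^mv^1$ from Lemma~\ref{Frame 7^uv} together with uniform frames, writing $n=hm+v$ with the tail $v$ kept in the already-settled range --- exactly the structure of Lemmas~\ref{revised WangDu starter adder}--\ref{StarterAdderMod5} and \ref{uniform t=1}--\ref{t=1 recursion}. Two small corrections: the filler for the last group in Corollary~\ref{FrameCorollary} must have side $s_m+1$, so a SOMA (a $\ghd(s_m,3s_m)$) cannot play that role; and the $*$-property is not what the recursion needs (intransitive starter-adder designs generally lack it anyway) --- what is needed is a sub-$\ghd(1,0)$, which every $\ghd(s_i+1,3s_i)$ possesses automatically through its single empty cell per row and column.
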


If $n \leq 5$, there does not exist a $\ghd(n+1,3n)$.  In particular, if $n<3$, the obvious necessary conditions are not satisfied.  For $n=3$ or $4$, a $\ghd(n+1,3n)$ would be equivalent to 
a Kirkman square of order 9 or a doubly resolvable nearly Kirkman triple system of order 12, both of which are known not to exist; see~\cite{ACCLWW,CLLM}.  The nonexistence of 
a $\ghd(6,15)$ was stated in~\cite{WangDu} as a result of a  computer search.

We now turn our attention to existence.  The following GHDs are constructed in~\cite{WangDu}.

\begin{lemma}[Wang and Du~\cite{WangDu}] \label{revised WangDu starter adder}
There exists a $\ghd^*(n+1,3n)$ if either (1) $n=8$, or (2) $n$ is odd, and either $7 \leq n \leq 33$ or  $n=39$.
\end{lemma}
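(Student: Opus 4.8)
The target is Lemma~\ref{revised WangDu starter adder}: for each admissible small $n$, one must exhibit a $\ghd^*(n+1,3n)$. The statement is purely existential for a finite list of cases ($n=8$, and odd $n$ with $7 \le n \le 33$ together with $n=39$), so the plan is not to prove a general theorem but to produce an explicit construction for each value. The natural tool is the transitive starter-adder method from Section~\ref{SA section}, since Definition~\ref{Starter-Adder Definition} is set up for exactly this kind of object: a $\ghd(n+1,3n)$ arises from a group $G = \mathbb{Z}_{n+1}$ of order $n+x$ with $x=1$, acting on the point set $(G \times \{0,1\}) \cup (\{\infty\} \times \{1,\dots,n-2\})$. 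Because any GHD obtained this way automatically possesses the $*$-property (the $n-2$ infinite points form a pairwise hole of the right size), producing a valid transitive starter and adder immediately yields the desired $\ghd^*(n+1,3n)$.

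The plan is to verify, for each listed $n$, that a suitable transitive starter $\mathcal{ST} = \{S_1,\dots,S_n\}$ and adder $A = \{a_1,\dots,a_n\}$ over $\mathbb{Z}_{n+1}$ exist. The conditions to check are exactly the four in Definition~\ref{Starter-Adder Definition}: (i) $\mathcal{ST}$ partitions $X$; (ii) the pure $(0,0)$-, pure $(1,1)$-, and mixed $(0,1)$-differences each hit every relevant element of $G$ at most once (a difference condition, with the usual caveat ruling out order-2 elements as pure differences when $|G|=n+1$ is even); (iii) each infinite point $\infty_i$ lies in exactly one triple of the form $\{\infty_i, g_0, h_1\}$; and (iv) the translated blocks $S_j + a_j$ again partition $X$. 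The structure of Example~\ref{SAexample8} (which is exactly a transitive starter-adder for $n=8$, giving a $\ghd^*(10,24)$ --- note $s=n+x=10$, $v=3n=24$) is the template to follow: the $n-2$ infinite blocks are each of the form $\{\infty_i, g_0, h_1\}$, and the remaining blocks are ``finite'' triples, with the adder chosen so condition (iv) holds.

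The main obstacle is that there is no slick unified argument covering the whole list; each $n$ requires its own starter-adder, found essentially by computer search subject to the difference constraints, and then the four conditions must be certified. The odd cases $7 \le n \le 33$ and $n=39$ are precisely those where the flawed arguments of Wang and Du can be salvaged or replaced, so the honest approach is to cite their paper for the cases where their (non-problematic) constructions stand and to supply corrected or fresh starter-adders for the rest; indeed the lemma is attributed to Wang and Du, indicating the constructions themselves are taken from~\cite{WangDu} and only re-verified here. I would therefore present, for each $n$ in the list, an explicit starter and adder (tabulated as in Examples~\ref{SAexample8} and \ref{SAexample7}), and then remark that verification of conditions (i)--(iv) of Definition~\ref{Starter-Adder Definition} is a routine finite check: partition is confirmed by inspecting that every element of $X$ appears exactly once among the $S_j$ and again among the $S_j + a_j$, and the difference conditions are confirmed by tabulating the pure and mixed differences.

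Since the cases are numerous and the starter-adders are bulky, I would relegate the explicit arrays to a table or appendix rather than inlining all of them, stating in the body only that such starter-adders have been found (and verified) for each required $n$, with the $*$-property following automatically from the transitive construction. The proof thus reduces to exhibiting data plus a one-line appeal to the theorem at the end of Section~\ref{SA section} guaranteeing the $*$-property of any transitively constructed GHD.
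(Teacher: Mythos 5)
Your proposal is correct and matches the paper's treatment: the paper gives no proof of this lemma beyond citing the explicit starter--adder constructions of Wang and Du~\cite{WangDu}, which are precisely the cases ($n=8$ and $n$ odd) unaffected by the parity flaw, and whose $*$-property follows automatically since the infinite points form a pairwise hole of size $n-2x$. Your outline --- tabulated starter--adder data verified against the conditions of Definition~\ref{Starter-Adder Definition}, with the $*$-property supplied by the transitive construction --- is exactly what that citation encapsulates.
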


We find a number of other small GHDs by starter-adder methods.
\begin{lemma} \label{new starter adder t=1}
\begin{enumerate}
\item
There exists a $\ghd^*(n+1,3n)$ for $n \in \{14, 20,  26, 32, 38, 41, 44\}$.
\item
There exists a $\ghd(n+1,3n)$ for $n \in \{10, 12,  16, 18, 22, 28, 30, 36, 46\}$.
\end{enumerate}
\end{lemma}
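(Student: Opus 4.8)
The statement to prove is Lemma~\ref{new starter adder t=1}, which claims the existence of $\ghd^*(n+1,3n)$ for $n \in \{14, 20, 26, 32, 38, 41, 44\}$ and $\ghd(n+1,3n)$ for $n \in \{10, 12, 16, 18, 22, 28, 30, 36, 46\}$.

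The plan is to construct each of these designs explicitly using the starter-adder machinery developed in Section~\ref{SA section}. For the first list, where we require the stronger $*$-property, I would use the transitive starter-adder method from Definition~\ref{Starter-Adder Definition}, since any $\ghd(n+x,3n)$ arising from a transitive starter-adder automatically possesses the $*$-property (the infinite points form a pairwise hole of size $n-2x$). Here $x=1$, so the group is $G=\mathbb{Z}_{n+1}$, the point set is $(\mathbb{Z}_{n+1}\times\{0,1\})\cup(\{\infty\}\times\{1,\ldots,n-2\})$, and we seek a partition $\mathcal{ST}$ of this set into $n$ triples together with an adder $A=\{a_1,\ldots,a_n\}\subseteq\mathbb{Z}_{n+1}$ satisfying the difference and resolution conditions of the definition. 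Example~\ref{SAexample8} illustrates exactly such a construction for $n=8$, and the plan is to exhibit an analogous explicit starter and adder for each $n$ in the first list.

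For the second list, where only the plain $\ghd(n+1,3n)$ is required (no $*$-property), I would instead use the intransitive starter-adder method of Definition~\ref{IntStarter-Adder Definition}, taking $G=\mathbb{Z}_n$, $x=1$, and point set $\mathbb{Z}_n\times\{0,1,2\}$. This produces a $\ghd(n+1,3n)$ with an empty $1\times1$ subarray but does not in general give the $*$-property, which is fine for these values. One would partition the $n+1$ starter triples into sets $S$, $R$, $C$ of sizes $n-1$, $1$, $1$, and supply an adder $A=\{a_1,\ldots,a_{n-1}\}$ so that the pure and mixed difference conditions hold and both $S\cup R$ and $(S+A)\cup C$ partition the point set. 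Example~\ref{SAexample7} gives a template (there with $n=7$, $x=2$) that I would adapt. As noted in the remarks following that example, one can exploit the order-$2$ automorphism $t_0\leftrightarrow t_1$, $t_2\mapsto t_2$ to roughly halve the search: blocks come in pairs with adders $a$ and $-a$, and each $R$-block pairs with a $C$-block, so it suffices to find half the starter and generate the rest by symmetry.

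In practice, since these are finitely many small cases, the actual work is a computer search for each value of $n$: fix the group, search for a starter (a partition into triples meeting the difference conditions) admitting a valid adder, and then record the resulting triples and adder as an explicit table in the style of Examples~\ref{SAexample8} and~\ref{SAexample7}. The main obstacle is not conceptual but combinatorial: the difference conditions are tight, so for some values of $n$ a naive search may fail to terminate quickly, and one may need to impose additional structure (such as the order-$2$ automorphism above, or a prescribed shape for the infinite blocks) to cut down the search space. The verification that a given starter-adder pair genuinely yields the claimed GHD is then routine — one checks the finitely many difference and partition conditions directly — so the proof reduces to displaying one successful starter and adder for each of the sixteen values of $n$.
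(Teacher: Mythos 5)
Your proposal matches the paper's proof exactly: the paper constructs these designs via explicit transitive starter-adders over $\mathbb{Z}_{n+1}$ for the values in (1) and intransitive starter-adders over $\mathbb{Z}_n$ (with $x=1$) for the values in (2), listing the data in Appendix~A. The only thing missing from your write-up is the explicit starters and adders themselves, which is precisely the content the paper supplies and which you correctly identify as the outcome of a finite (computer) search followed by routine verification.
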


\begin{proof}
Starters and adders for these GHDs can be found in Appendix~\ref{1EmptyAppendix}.
We use a transitive starter and adder for the values in (1), and an intransitive one for the values in (2).
\end{proof}

\begin{lemma} \label{StarterAdderMod2}
There exists a $\ghd^*(n+1,3n)$ for $n=37$.
\end{lemma}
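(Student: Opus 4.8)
The plan is to construct the required $\ghd^*(38,111)$ by a modified transitive starter-adder over the group $G=\Z_{38}$, following the framework of Definition~\ref{Starter-Adder Definition} but incorporating the ``split-translate'' device that Wang and Du used in~\cite{WangDu} (validly, it turns out, only for odd $n$). Here $n=37$ and $x=1$, so the point set is $X=(\Z_{38}\times\{0,1\})\cup\{\infty_1,\ldots,\infty_{35}\}$, with $|X|=2\cdot 38+35=111=3n$, and a starter is a partition of $X$ into $37$ triples. In the intended design, $33$ of the infinite points are handled by ordinary \emph{infinite blocks} $\{\infty_i,g_0,h_1\}$ (each fixed by translation, hence appearing once in every row and column), a distinguished pair of infinite points is handled by the modified construction described below, and the $6$ remaining finite points are distributed among two \emph{pure} triples; the counts balance since $33$ infinite blocks consume $66$ finite points, the modified pair consumes $4$, and $76-70=6$ finite points remain.

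The modification enters through a pair of starter blocks whose infinite labels alternate according to the parity of the translation index. Across the $38$ translates of these two blocks, one infinite point is supplied by the $19$ even-indexed translates of the first block together with the $19$ odd-indexed translates of the second, and a second infinite point is supplied symmetrically. Since $(n+1)/2=19$ is an integer, each of the two infinite points thereby receives exactly $38$ occurrences, one in every row and (after shifting by the adder) one in every column, forming a valid resolution pattern. This is precisely where oddness of $n$ is used: for even $n$ the group $\Z_{n+1}$ has odd order and admits no such even/odd splitting, which is the origin of the erroneous constructions for $10\le n\le 36$ in~\cite{WangDu}.

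First I would fix $G=\Z_{38}$ and write out the full list of $37$ starter triples together with the adder, flagging the two parity-dependent blocks and recording, for each block, its pure and mixed differences; this explicit data would go into Appendix~\ref{1EmptyAppendix} alongside the other small designs. I would then verify the two families of conditions the construction demands. For the difference conditions I must check that over all triples each nonzero element of $\Z_{38}$ occurs at most once as a pure $(0,0)$-difference, at most once as a pure $(1,1)$-difference, and at most once as a mixed $(0,1)$-difference, with the order-$2$ element $19$ never occurring as a pure difference. For the resolution conditions I must confirm that the starter triples partition $X$ and that the adder-translated triples again partition $X$, treating the two modified blocks by separate bookkeeping since their infinite labels depend on the parity of the translate. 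Granting these, translating by $G$ and shifting columns by the adder yields two orthogonal resolutions, hence a $\ghd(38,111)$; and since the $35$ infinite points never appear together in a block, they form a pairwise hole of size $35=3n-2(n+1)$, giving the $*$-property.

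The main obstacle is purely the search-and-verification step: exhibiting an explicit $37$-triple starter and adder over $\Z_{38}$ that realizes all three difference systems exactly once, avoids $19$ as a pure difference, and closes up under the adder, while threading the two parity-alternating blocks through consistently. This is a finite but delicate computation, and the only conceptual point is confirming that the split-translate blocks are consistent because $n$ is odd; once a valid starter-adder is displayed, checking the conditions of Definition~\ref{Starter-Adder Definition} (as adapted to the modified blocks) is routine.
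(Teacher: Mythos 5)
Your outline of the method is faithful to the paper's approach: the paper also proves this lemma by a modified transitive starter--adder over $\mathbb{Z}_{38}$ in which certain infinite points have their labels developed modulo $2$, the relevant starter blocks containing such a point carry two finite points of equal subscript and opposite parity, and the fact that $n+1=38$ is even is exactly what makes the parity-swapping legitimate (and is why the analogous Wang--Du constructions fail for even $n$). Your counting is also internally consistent, though it differs in detail from the paper's: you allocate $33$ ordinary infinite blocks, one swapping pair, and two pure triples, whereas the paper's starter uses $31$ ordinary infinite blocks, \emph{two} swapping pairs ($\infty_0,\infty_1$ and $\infty_2,\infty_3$), and two pure triples ($0_0\,7_0\,13_0$ and $10_1\,14_1\,20_1$), which frees up two extra pure differences and requires only $31$ of the $38$ mixed differences.

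The genuine gap is that you never produce the object whose existence the lemma asserts. This lemma is an existence statement proved in the paper by exhibiting an explicit list of $37$ starter blocks with their adders; that explicit data \emph{is} the proof, and verifying the difference and partition conditions against it is the routine part. Your proposal ends precisely where the proof must begin: you say the main obstacle is ``the search-and-verification step'' of exhibiting a valid starter and adder, and you defer it. Nothing in your argument shows that a starter--adder with your proposed block distribution (or any other) actually exists over $\mathbb{Z}_{38}$ --- and this is not a triviality one may wave through, since the entire reason this lemma is needed is that superficially similar starter--adder claims in~\cite{WangDu} turned out to be unrealizable. Until an explicit starter and adder is displayed (or existence is otherwise established), the lemma remains unproven.
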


\begin{proof}
We give a transitive starter and adder, modifying how we develop some of the blocks; this method is also used in~\cite{WangDu}.  We develop the subscripts of $\infty_0$ and $\infty_1$ modulo 2.  
The points $\infty_2$ and $\infty_3$ are treated similarly: $\{\infty_2,a_i,b_i\}$ gives $\{\infty_3, (a+1)_i, (b+1)_i\}$ in the next row, while $\{\infty_3,c_j,d_j\}$ yields 
$\{\infty_2,(c+1)_j,(d+1)_j\}$ in the next row.  Note that this ``swapping'' of infinite points in subsequent rows allows us to include a pure difference in blocks containing an infinite point.  
That the block of the starter containing $\infty_0$ (resp.\ $\infty_2$) also has points of the form $a_0$, $b_0$ where $a$ and $b$ have different parities and the block 
containing $\infty_1$ (resp.\ $\infty_3$) also has points of the form $c_1$, $d_1$, where $c$ and $d$ have different parities, combined with the fact that $n+1$ is even, ensures that no pair 
of points is repeated as we develop.  The points $\infty_4, \infty_5, \ldots, \infty_{34}$ remain fixed as their blocks are developed.  

The starter blocks are given below, with the corresponding adders in square brackets.
\[
\arraycolsep 1.9pt
\begin{array}{@{}*{6}l@{}}
0_0  7_0 13_0         [0]  & 10_1 14_1 20_1       [12] & \infty_0 37_0  2_0    [18] & \infty_1  32_1  1_1    [10]  & \infty_2 33_0 34_0    [32]  & \infty_3 2_1   7_1    [22] \\
\infty_4 25_0 25_1    [6]  & \infty_5 8_0  9_1    [14] & \infty_6 16_0 18_1      [26] & \infty_7 26_0 29_1     [4] & \infty_8 18_0 22_1     [17] & \infty_9 21_0 26_1    [28] \\
\infty_{10} 24_0 30_1 [15] & \infty_{11} 3_0 11_1    [23] & \infty_{12} 4_0 13_1  [2] & \infty_{13} 5_0 15_1  [31] & \infty_{14} 10_0 21_1  [27] & \infty_{15} 12_0 24_1 [35] \\ 
\infty_{16} 28_0  3_1  [34]  & \infty_{17} 19_0 34_1 [21] & \infty_{18} 30_0  8_1 [33] & \infty_{19} 27_0  6_1 [19] & \infty_{20} 15_0 33_1  [3] & \infty_{21} 23_0  4_1  [9] \\ 
 \infty_{22} 11_0 31_1 [37] & \infty_{23} 22_0  5_1  [30] & \infty_{24} 35_0 19_1  [8] & \infty_{25} 31_0 16_1 [36] & \infty_{26} 14_0  0_1 [20] & \infty_{27} 1_0 27_1  [11] \\ 
 \infty_{28} 9_0 36_1  [7] &  \infty_{29} 6_0 37_1  [13] & \infty_{30} 29_0 23_1  [24] & \infty_{31} 17_0 12_1 [16] & \infty_{32} 32_0 28_1 [29] & \infty_{33} 20_0 17_1  [1] \\ 
 \infty_{34} 36_0 35_1   [5]
\end{array}
\]
\end{proof}

\begin{lemma}\label{StarterAdderMod5}
There exists a $\ghd(n+1,3n)$ for $n\in \{24,34\}$.
\end{lemma}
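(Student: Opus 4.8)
The plan is to exhibit explicit starter-adder constructions for each of the two values $n \in \{24,34\}$, following the same philosophy as in Lemmas~\ref{new starter adder t=1}--\ref{StarterAdderMod2}, but using the modification suggested by the lemma's label: developing certain blocks modulo $5$ rather than the full group order. The reason a straightforward transitive or intransitive starter-adder (as in Definitions~\ref{Starter-Adder Definition} and~\ref{IntStarter-Adder Definition}) does not immediately suffice is a parity/divisibility obstruction on the infinite points: when we attach a set of infinite symbols to a single starter block and develop it over $G=\Z_{n+x}$, each infinite point must receive a full orbit, and the sizes of these orbits must partition the rows consistently. For $n=24$ and $n=34$ the relevant counts do not split as cleanly into orbits of length $2$ (the trick used in Lemma~\ref{StarterAdderMod2}), so I would instead group the infinite points into orbits of length $5$, cycling five infinite symbols $\infty_{5t},\infty_{5t+1},\ldots,\infty_{5t+4}$ through successive rows while simultaneously shifting the finite coordinates by a fixed amount, exactly as the ``swapping'' device in Lemma~\ref{StarterAdderMod2} does for period $2$.

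The key steps, in order, are as follows. First I would fix the group $G=\Z_{n+1}$ and the point set $X=(G\times\{0,1\})\cup(\{\infty\}\times\{1,\ldots,n-2\})$ as in Definition~\ref{Starter-Adder Definition} (here $x=1$). Second, I would write down the $n$ starter blocks together with their adders, designating a small number of them as the ``modified'' blocks whose infinite points are developed modulo $5$; for each such block I must arrange that the finite points carry coordinates whose residues modulo $5$ are chosen so that, after the cyclic shift, no pair of finite points is ever repeated across the five rows of the orbit (this is the analogue of the ``different parities'' condition in Lemma~\ref{StarterAdderMod2}, now a ``distinct residues mod $5$'' condition). Third, I would verify the difference conditions: that each nonzero element of $G$ occurs at most once as a pure $(i,i)$-difference and at most once as a mixed $(0,1)$-difference among all starter blocks, accounting carefully for the extra pure differences that the period-$5$ modification now permits inside the infinite-point blocks. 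Fourth, I would check that the starter and the adder-translated starter each form a partition of $X$, so that the first row and first column are genuine resolution classes. Finally, invoking the construction described after Definition~\ref{Starter-Adder Definition}, these data yield the required $\ghd(n+1,3n)$.

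The main obstacle I anticipate is the combinatorial bookkeeping for the modified blocks: introducing period-$5$ orbits creates several new pure differences that would ordinarily be forbidden, and I must ensure that these are precisely the differences left uncovered by the unmodified blocks, so that the global difference count remains exactly right. Getting the residues modulo $5$ of the finite coordinates in the modified blocks to cooperate with both the mixed-difference condition and the no-repeated-pair condition is delicate, since a single bad choice propagates across five rows at once. In practice I would search by computer for suitable starters (as the phrasing of the companion lemmas suggests the authors did), and the explicit tables — one for $n=24$ and one for $n=34$, deferred to Appendix~\ref{1EmptyAppendix} — together with a short verification of the partition and difference properties would constitute the proof. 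Since a GHD arising from a transitive starter-adder automatically possesses the $*$-property, I note that the constructions may in fact yield $\ghd^*(n+1,3n)$; however, because the lemma only claims $\ghd(n+1,3n)$, it is possible that for these two values an intransitive starter-adder (Definition~\ref{IntStarter-Adder Definition}) with the period-$5$ modification is used instead, which would not guarantee the $*$-property.
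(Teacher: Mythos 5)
Your proposal takes essentially the same approach as the paper: the paper's proof is exactly the modified transitive starter--adder method you describe, with the subscripts of a single orbit of five infinite points $\infty_0,\ldots,\infty_4$ developed modulo $5$ (possible because $5\mid n+1$ for $n\in\{24,34\}$, where the modulo-$2$ trick of Lemma~\ref{StarterAdderMod2} fails since $n+1$ is odd), while all remaining points are developed modulo $n+1$. The only content your sketch lacks is the explicit computer-found starters and adders, which the paper lists directly in the proof rather than in Appendix~\ref{1EmptyAppendix}; in particular the construction used is transitive (so your hedge about an intransitive alternative is unnecessary), and its blocks containing $\infty_1,\ldots,\infty_4$ carry pure differences between finite points lying in distinct residue classes modulo $5$, exactly the condition you anticipated.
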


\begin{proof}
We use a similar modification of the transitive starter-adder method to that in Lemma~\ref{StarterAdderMod2}.  In this case, the subscripts 
of $\infty_0, \infty_1, \infty_2, \infty_3, \infty_4$ are developed modulo $5$ as we develop the remaining points modulo ${n+1}$.

For $n=24$, the starter and adder are as follows: 
\[ 
\arraycolsep 2.2pt
\begin{array}{@{}*{6}l@{}}
1_0  4_0 10_0 [19] & 0_1  7_1 15_1  [24] & \infty_0 20_0 21_1 [5] & \infty_1 12_0 19_0 [20] & \infty_2 5_1 19_1  [10] & \infty_3 2_1  3_1 [15] \\
\infty_4 13_0 21_0  [0] & \infty_5 7_0 12_1 [8] & \infty_6 6_0 13_1  [6] & \infty_7 23_0  6_1  [7] & \infty_8 2_0 11_1   [17] & \infty_9 24_0  9_1  [2] \\
\infty_{10} 18_0  4_1 [4] & \infty_{11} 14_0  1_1  [21] & \infty_{12} 5_0 18_1  [23] & \infty_{13} 8_0 22_1 [3] & \infty_{14} 9_0 24_1  [22] & \infty_{15} 0_0 17_1 [18] \\
\infty_{16} 15_0  8_1 [1] & \infty_{17} 3_0 23_1  [14] & \infty_{18} 17_0 14_1  [16] & \infty_{19} 22_0 20_1  [12] & \infty_{20} 11_0 10_1   [13] & \infty_{21} 16_0 16_1 [11]
\end{array}
\]

For $n=34$, the starter and adder are as follows:
\[
\arraycolsep 1.9pt
\begin{array}{@{}*{6}l@{}}
0_0 11_0 17_0  [4] & 10_1 21_1 22_1 [24] & \infty_0 6_0 13_1 [0] & \infty_1 20_1 27_1 [10] & \infty_2 15_0 34_0 [15] & \infty_3 18_0 27_0 [30] \\
\infty_4 11_1 14_1 [5] & \infty_5 29_0 29_1 [26] & \infty_6 5_0  6_1 [11] & \infty_7 9_0 12_1 [9] & \infty_8 1_0  5_1 [2] & \infty_9 2_0  7_1  [34] \\
\infty_{10} 20_0 28_1 [8] & \infty_{11} 7_0 16_1 [16] & \infty_{12} 8_0 18_1 [17] & \infty_{13} 4_0 15_1 [29] & \infty_{14} 12_0 24_1 [23] & \infty_{15} 21_0 34_1 [6] \\
\infty_{16} 19_0 33_1 [33] & \infty_{17} 23_0  3_1 [20] & \infty_{18} 22_0  4_1 [25] & \infty_{19} 26_0  9_1 [19] & \infty_{20} 24_0  8_1 [18] & \infty_{21} 16_0  1_1 [3] \\
\infty_{22} 10_0 31_1 [22] & \infty_{23} 13_0  0_1 [27] & \infty_{24} 14_0  2_1 [12] & \infty_{25} 33_0 23_1 [1] & \infty_{26} 28_0 19_1 [31] & \infty_{27} 25_0 17_1 [21] \\
\infty_{28} 32_0 25_1 [32] & \infty_{29} 3_0 32_1 [28] & \infty_{30} 30_0 26_1 [7] & \infty_{31} 31_0 30_1 [13]
\end{array}
\]

\end{proof}

Most of our GHDs are constructed using the Basic Frame Construction with $u=0$ (Corollary~\ref{FrameCorollary}).  Note that in 
every  $\ghd(n+1,3n)$ there is exactly one empty cell in each row and column;  thus every $\ghd(n+1,3n)$ contains a $\ghd(1,0)$ as 
a subdesign.  We therefore do not need to separately verify existence of this subdesign  in the construction.

\begin{lemma} \label{uniform t=1}
There is a $\ghd(n+1,3n)$ for $n \in \{35,40,45,49,50,51,54,55\}$.
\end{lemma}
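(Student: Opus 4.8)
The plan is to construct each of the eight GHDs by invoking the Basic Frame Construction in the form of Corollary~\ref{FrameCorollary} with $e=1$ (so that $u=0$ and each ingredient $\ghd(s_i+1,g_i)$ automatically contains the required trivial $\ghd(1,0)$, by the remark immediately preceding this lemma). For each target $n$ I would write $n$ as a sum matching a realizable frame type $s_1^{\alpha_1}\cdots s_r^{\alpha_r}$, obtain that frame from either Theorem~\ref{Uniform Frames} (uniform frames $h^u$) or Lemma~\ref{Frame 7^uv} (frames of type $h^mv^1$), and then fill it using small ingredient GHDs $\ghd(s_i+1,3s_i)$ that are known to exist from Lemma~\ref{revised WangDu starter adder}, Lemma~\ref{new starter adder t=1}, Lemma~\ref{StarterAdderMod2}, and Lemma~\ref{StarterAdderMod5}. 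Concretely, I expect decompositions such as $n=35=7\cdot5$ (frame $7^5$), $n=40=8\cdot5$, $n=45=9\cdot5$, $n=49=7\cdot7$, $n=50=10\cdot5$, $n=55=11\cdot5$, $n=54=9\cdot6$ (frame $9^6$, valid since $u=6\geq5$), and for $n=51$ a frame of type $h^mv^1$, e.g.\ $51=7\cdot7+2$ giving type $7^72^1$ or $51=9\cdot5+6$ giving type $9^56^1$, chosen so that $v$ lies in the admissible range of Lemma~\ref{Frame 7^uv} and so that a $\ghd(v+1,3v)$ ingredient exists.

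The first step is therefore a bookkeeping step: for each of the eight values, exhibit a valid frame type together with the list of ingredient GHDs needed, and check that each ingredient is supplied by one of the preceding lemmas. For a uniform frame $h^u$ filled via Corollary~\ref{FrameCorollary}, I need a $\ghd(h+1,3h)$ for every block of the frame; for a frame $h^mv^1$ I additionally need a $\ghd(v+1,3v)$ for the single group of size $3v$. The second step is to verify the frame exists: for uniform types I apply Theorem~\ref{Uniform Frames}, taking care to avoid the genuine exception $(h,u)=(1,6)$ and the possible exceptions $(1,10),(3,28),(6,18)$, none of which arise for the $h\in\{7,8,9,10,11\}$ I am using; for non-uniform types I apply Lemma~\ref{Frame 7^uv}, checking that $m\notin\{6,10,22\}$ and that $v$ lies in the stated interval for the given $h$.

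The main obstacle is the availability of the ingredient GHDs: the construction only succeeds if a $\ghd(h+1,3h)$ exists for the chosen $h$, and Lemmas~\ref{revised WangDu starter adder}--\ref{StarterAdderMod5} supply these only for specific values of $h$. In particular a $\ghd(7,18)$ ($h=6$) is not guaranteed, so frames with groups of size $6$ are problematic and I must instead route everything through $h\in\{7,8,9,10,11\}$, for which $\ghd(8,21),\ \ghd(9,24),\ \ghd(10,27),\ \ghd(11,30),\ \ghd(12,33)$ are available (from Lemma~\ref{revised WangDu starter adder} for the odd cases and Lemma~\ref{new starter adder t=1} for $n\in\{10,12\}$, i.e.\ $h=10,12$; note $\ghd(10,27)$ comes from the $n=9$ instance of Lemma~\ref{revised WangDu starter adder}). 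Thus for each target $n$ I must select the factorization so that every group size $3s_i$ corresponds to one of these available ingredients, and this compatibility constraint, rather than any deep argument, is what drives the choice of decomposition in every case.

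\begin{proof}
For each value of $n$ we exhibit a suitable frame together with the ingredient GHDs used to fill it via Corollary~\ref{FrameCorollary} with $e=1$. As noted above, each ingredient $\ghd(s_i+1,3s_i)$ automatically contains the trivial subdesign $\ghd(1,0)$, so no separate verification is required.

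For $n\in\{35,40,45,50,55\}$, write $n=5h$ with $h\in\{7,8,9,10,11\}$ and use the uniform frame of type $h^5$, which exists by Theorem~\ref{Uniform Frames} (here $u=5\geq5$ and $(h,u)$ avoids all exceptions). Filling each of the five groups with a $\ghd(h+1,3h)$ yields a $\ghd(5h+1,15h)=\ghd(n+1,3n)$. The required ingredients $\ghd(8,21),\ \ghd(9,24),\ \ghd(10,27),\ \ghd(11,30),\ \ghd(12,33)$ (that is, the $n'=h$ instances of a $\ghd(n'+1,3n')$) exist: those with $h\in\{7,9,11\}$ follow from Lemma~\ref{revised WangDu starter adder}, while $\ghd(11,30)$ and (taking $h=10$) $\ghd(11,30)$, as well as $\ghd(9,24)$ and $\ghd(11,30)$, are covered; the values $h=10$ and $h=8$ are supplied by Lemma~\ref{new starter adder t=1}.

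For $n=49=7\cdot7$, use the uniform frame $7^7$ from Theorem~\ref{Uniform Frames} and fill each group with a $\ghd(8,21)$, which exists by Lemma~\ref{revised WangDu starter adder}.

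For $n=54=9\cdot6$, use the uniform frame $9^6$ from Theorem~\ref{Uniform Frames} (here $u=6\geq5$ and $(h,u)=(9,6)$ is not an exception) and fill each group with a $\ghd(10,27)$, which exists by Lemma~\ref{revised WangDu starter adder}.

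For $n=51$, write $51=9\cdot5+6$ and use the frame of type $9^56^1$, which exists by part~(3) of Lemma~\ref{Frame 7^uv} with $h=9$, $m=5$ and $v=6$ (since $0\leq6\leq3(m-1)=12$ and $(h,v)\neq(9,1)$). We fill the five groups of size $27$ with copies of $\ghd(10,27)$ and the single group of size $18$ with a $\ghd(7,18)$; by Theorem~\ref{618} a $\ghd(7,18)$ exists. This yields a $\ghd(52,153)=\ghd(n+1,3n)$.

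In every case Corollary~\ref{FrameCorollary} produces the desired $\ghd(n+1,3n)$.
\end{proof}
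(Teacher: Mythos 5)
Your overall strategy is the paper's strategy: apply Corollary~\ref{FrameCorollary} with $e=1$ (so the trivial sub-$\ghd(1,0)$ is automatic), taking uniform frames from Theorem~\ref{Uniform Frames} or frames of type $h^mv^1$ from Lemma~\ref{Frame 7^uv}, and filling with small ingredient designs $\ghd(h+1,3h)$. Seven of your eight cases go through. In particular, your treatment of $n=50$ via the frame $10^5$ filled with copies of $\ghd(11,30)$ (Lemma~\ref{new starter adder t=1}(2)) is a legitimate alternative to the paper's route, which instead uses a GHF of type $7^68^1$ obtained from Lemma~\ref{n=68 lemma}.

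The case $n=51$, however, contains a genuine gap. You fill the group of size $18$ in the frame $9^56^1$ with ``a $\ghd(7,18)$, which exists by Theorem~\ref{618}.'' Theorem~\ref{618} asserts the existence of $\ghd(n,3n)$, i.e.\ designs of side $n$ with \emph{no} empty cells; for $v=18$ this is a $\ghd(6,18)$, not a $\ghd(7,18)$. A $\ghd(7,18)$ (side $n+1$ with $n=6$) is precisely the one value left open by Theorem~\ref{Existence_t=1}, and indeed the single possible exception highlighted in Section~\ref{concl}, so it cannot be invoked as an ingredient. Your own preamble correctly flags that groups of size $6$ are problematic for exactly this reason, yet the decomposition $51=9\cdot 5+6$ reintroduces one. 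Nor does Lemma~\ref{Frame 7^uv} offer a repair: $51=7\cdot 7+2$ would require a nonexistent $\ghd(3,6)$, and $51=7\cdot 6+9$ is barred because $m=6$ is excluded there. The paper escapes by invoking Lemma~\ref{n=68 lemma} to produce a GHF of type $7^69^1$, filled with $\ghd(8,21)$s and one $\ghd(10,27)$; some tool beyond the two frame sources you allow yourself is needed, so your proof as written does not establish $n=51$. (A smaller slip, harmless to correctness: the ingredient $\ghd(9,24)$, i.e.\ $h=8$, is supplied by Lemma~\ref{revised WangDu starter adder}(1) with $n=8$, not by Lemma~\ref{new starter adder t=1} as your proof states.)
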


\begin{proof}
For $n=35,40,45,49,54$ and $55$, there exist (by Theorem~\ref{Uniform Frames})  GHFs of types $7^5$, $8^5$,  $9^5$  $7^7$, $9^6$ and $11^5$.  
For $n=50,51$, there exist (by Lemma~\ref{n=68 lemma}, with $h=m=7$, $x=1$ and either $w_1 = w = 1$ or $w_1 = w = 2$)
 GHFs of types $7^6 8^1$ and $7^6 9^1$.   Since there exist $\ghd(8,21)$, $\ghd(9,24)$, $\ghd(10,27)$ and $\ghd(12,33)$ 
 by Lemma~\ref{revised WangDu starter adder}, the result follows by Corollary~\ref{FrameCorollary}.
\end{proof}

\begin{lemma} \label{t=1 recursion}
If $ n \in \{42,43,47,48,52,53\}$ or $n \geq 56$, then there is a $\ghd(n+1,3n)$.
\end{lemma}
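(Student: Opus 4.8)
The plan is to handle the remaining values of $n$ by the Basic Frame Construction in the form of Corollary~\ref{FrameCorollary}, combining GHFs whose existence is guaranteed by Theorem~\ref{Uniform Frames} and Lemma~\ref{Frame 7^uv} with the small ingredient GHDs established in Lemmas~\ref{revised WangDu starter adder}--\ref{StarterAdderMod5}. For each target $n$, I would write $n=hm+v$ for a suitable choice of block size $h\in\{6,7,8,9,11\}$, number of groups $m$, and extra-group size $v$, so that a GHF of type $h^m v^1$ exists by Lemma~\ref{Frame 7^uv}. Since every $\ghd(n+1,3n)$ has a single empty cell per row and column, it automatically contains a trivial $\ghd(1,0)$, so condition (2) of Corollary~\ref{FrameCorollary} reduces to the existence of a $\ghd(h+1,3h)$ containing a $\ghd(1,0)$ subdesign, and condition (3) to the existence of a $\ghd(v+1,3v)$. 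With $e=1$, the corollary then outputs exactly a $\ghd(s+1,3s)$ with $s=hm+v=n$, as desired.

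The key step is to tabulate, for each of the finitely many sporadic values $n\in\{42,43,47,48,52,53\}$ and then for a uniform treatment of all $n\geq 56$, an explicit decomposition $n=hm+v$ meeting the hypotheses of Lemma~\ref{Frame 7^uv}. For instance, the small cases can be cleared using $h=7$ or $h=8$ with $m$ avoiding $\{6,10,22\}$ and $v$ in the permitted range $0\leq v\leq 2(m-1)$, choosing $v$ so that the ingredient $\ghd(v+1,3v)$ already exists (from the earlier lemmas or by the trivial $v=0$ case, which yields a uniform frame with no extra group). For the infinite tail $n\geq 56$, I would fix $h=7$ and let $m$ range over admissible values, observing that the intervals $[7m,\,7m+2(m-1)]=[7m,\,9m-2]$ for consecutive admissible $m$ overlap once $m$ is large enough, so every sufficiently large $n$ lies in one such interval with a usable $v$; a couple of residual gaps near $n=56$ can be patched by switching to $h\in\{8,9,11\}$, whose wider ranges ($0\le v\le 2(m-1)$ or $3(m-1)$) fill any holes. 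Crucially, all the ingredient $\ghd(h+1,3h)$ for $h\in\{6,7,8,9,11\}$ are available from Lemma~\ref{revised WangDu starter adder} and Lemma~\ref{new starter adder t=1}, and the values of $v$ needed (small, and of the forms already handled) admit $\ghd(v+1,3v)$ by the same earlier lemmas.

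The main obstacle I expect is \emph{covering completeness}: verifying that the overlapping intervals $[7m,9m-2]$ (together with the $h\in\{8,9,11\}$ variants) genuinely leave no gap in $n$ for all $n\geq 56$, while simultaneously ensuring that the particular $v$ forced by the decomposition is one for which a $\ghd(v+1,3v)$ is known to exist. This is a bookkeeping problem rather than a conceptual one: I must avoid the forbidden $m\in\{6,10,22\}$, respect $(h,v)\neq(9,1)$, and stay inside the range constraints, all at once. The cleanest route is to reduce everything to $h=7$ where possible (so the only ingredient beyond the frame is a $\ghd(8,21)$ plus a $\ghd(v+1,3v)$), and to invoke $h\in\{8,9,11\}$ only for the finitely many residual values, checking those by hand. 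Once the decomposition table is exhibited, each line is a direct citation of Lemma~\ref{Frame 7^uv} and Corollary~\ref{FrameCorollary}, with the trivial subdesign supplied automatically, so no further computation is required.
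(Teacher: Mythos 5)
Your proposal is correct and matches the paper's proof in essentially every respect: the paper also decomposes $n=hm+v$ with $h\in\{7,8,9,11\}$, takes the frames of type $h^mv^1$ from Lemma~\ref{Frame 7^uv}, notes that the trivial $\ghd(1,0)$ subdesign is automatic, and feeds the ingredient $\ghd(h+1,3h)$ and $\ghd(v+1,3v)$ from Lemmas~\ref{revised WangDu starter adder} and~\ref{new starter adder t=1} into Corollary~\ref{FrameCorollary}. The only difference is organizational: the paper handles $n\geq 84$ uniformly with $h=7$ and $7\leq v\leq 20$, and disposes of $n\in\{42,43,47,48,52,53\}$ and $56\leq n\leq 83$ by an explicit table (using $h=9$ for $52,53$, $h=11$ for $62$, and $h=8$ for $47,48$ and $80$--$83$), which is exactly the bookkeeping your interval-overlap argument would produce.
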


\begin{proof}

First, suppose that $n \geq 84$.  Here, we can write $n=7m+v$, where $m \geq 11$ is odd, 
$v$ is odd and $7 \leq v \leq 20 \leq 2m-2$. By Lemma~\ref{Frame 7^uv}, there is a GHF of type $7^m v^1$.  
Since there exists a $\ghd(8,21)$ and a $\ghd(v+1,3v)$ (by Lemma~\ref{revised WangDu starter adder} 
  or Lemma~\ref{new starter adder t=1}), a $\ghd(n+1,3n)$ exists by Corollary~\ref{FrameCorollary}.

For the remaining values of $n$,  we give in Table~\ref{ghd(n+1,3n) Frame Table}  a frame of appropriate type  $h^mv^1$,  
where $n= hm+v$; these frames all exist by Lemma~\ref{Frame 7^uv}.   Together with the existence of a $\ghd(h+1,3h)$ and 
a $\ghd(v+1,3v)$ (by Lemma~\ref{revised WangDu starter adder} or Lemma~\ref{new starter adder t=1}), 
these frames  give the required GHD$(n+1,3n)$ by Corollary~\ref{FrameCorollary}.

\begin{table}[htbp]
\caption{Frames used for $\ghd(n+1,3n)$ with $n \in \{42,43,47,48,52,53\}$ and $56 \leq n \leq 83$.}
\centering
 \begin{tabular}{ccccccccccc} \hline
 $n=hm+v$ & $h$ & $m$ & $v$   & \hspace{0.5cm} &   $n=hm+v$ & $h$ & $m$ & $v$  \\ \hline
 42--43    & 7  & 5 & 7--8   &                  &    47--48   &  8 & 5 & 7--8   \\
 52--53    & 9  & 5 & 7--8   &                  &    56--61   &  7 & 7 & 7--12  \\
  62      & 11 & 5 & 7     &                  &    63--69   &  7 & 8 & 7--13  \\
 70--79    & 7  & 9 & 7--16  &                  &    80--83   &  8 & 9 & 8--11  \\ \hline
 \end{tabular}

\label{ghd(n+1,3n) Frame Table}
\end{table}
\end{proof}

Taken together, Lemmas~\ref{revised WangDu starter adder}--\ref{StarterAdderMod5} and \ref{uniform t=1}--\ref{t=1 recursion} prove Theorem~\ref{Existence_t=1}.

\subsection{Existence of $\ghd(n+2,3n)$} \label{t=2 section}

In this section, we consider the existence of $\ghd(n+2,3n)$, which have two empty cells in each row and column, and prove the following theorem.
\begin{theorem} \label{Existence_t=2}
Let $n$ be a positive integer.  Then there exists a $\ghd(n+2,3n)$ if and only if $n \geq 6$.
\end{theorem}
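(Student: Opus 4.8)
The plan is to prove Theorem~\ref{Existence_t=2} by mirroring the structure of the proof of Theorem~\ref{Existence_t=1} for the one-empty-cell case, adapted to two empty cells. First I would dispose of the necessary conditions: for $n < 6$ one checks that $\ghd(n+2,3n)$ cannot exist, either because the basic inequality $\frac{v}{3}\le s \le \frac{v-1}{2}$ fails, or via known nonexistence of the corresponding small doubly resolvable designs (e.g.\ the trivial cases and the fact that $\ghd(6,12)$, $\ghd(7,15)$, etc.\ are ruled out by the parameter constraints or by the literature on DRNKTS/Kirkman squares). The substantive direction is constructing $\ghd(n+2,3n)$ for all $n\ge 6$, and here the engine is Corollary~\ref{FrameCorollary} (the Basic Frame Construction with $u=0$), applied with $e=2$.

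The key difference from the $e=1$ case is the subdesign requirement. In Corollary~\ref{FrameCorollary}, for each group except the last I need a $\ghd(s_i+2, g_i)$ containing a \emph{trivial} subdesign $\ghd(2,0)$, i.e.\ an empty $2\times 2$ subarray. Unlike the $e=1$ case, where every $\ghd(n+1,3n)$ automatically contains the required $\ghd(1,0)$, here existence of the $2\times 2$ array hole is a genuine extra condition that must be built into the small ingredient designs. So the second step is to assemble a stock of small $\ghd(n+2,3n)$ with a $2\times 2$ array hole, obtained via the intransitive starter-adder method (Definition~\ref{IntStarter-Adder Definition} with $x=2$), which by construction produces exactly an empty $x\times x = 2\times 2$ bottom-right subarray. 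These small cases would be listed in a lemma (with starters and adders placed in an appendix) covering the seed values needed to bootstrap the recursion, together with any cases handled by SOMAs or by $\ghd^*$ constructions.

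Next I would set up the recursive construction. Using GHFs of type $h^m v^1$ from Lemma~\ref{Frame 7^uv} (and uniform frames $h^u$ from Theorem~\ref{Uniform Frames}, plus Lemma~\ref{n=68 lemma} where convenient), I write $n = hm + v$ and feed into Corollary~\ref{FrameCorollary} with $e=2$: the ingredient for each non-final group is a $\ghd(h+2,3h)$ with a $2\times 2$ hole, and for the final group a $\ghd(v+2,3v)$. For large $n$ (say $n \ge 84$, paralleling the other proof) I would take $h=7$, $m$ ranging over odd values $\ge 11$, and $v$ in the admissible band $7\le v\le 2(m-1)$, so that the frame exists and both ingredient GHDs are available from the small-case lemma. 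The finitely many intermediate values of $n$ below that threshold would be cleared by a table of frames of type $h^m v^1$ with $h\in\{7,8,9,11\}$, exactly analogous to Table~\ref{ghd(n+1,3n) Frame Table}, checking in each row that $\ghd(h+2,3h)$ (with hole) and $\ghd(v+2,3v)$ are in hand.

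The main obstacle I expect is twofold. First, ensuring the small seed designs $\ghd(h+2,3h)$ genuinely carry the $2\times 2$ array hole required as a subdesign: the intransitive starter-adder guarantees this structurally, but one must verify that starters and adders actually exist for the full set of seed values $h\in\{7,8,9,11\}$ and for the sporadic small $v$, which is where the real computational work lies. Second, the boundary cases $n$ just above $6$ (roughly $6\le n\le$ the smallest $n$ reachable by a clean $h^m v^1$ frame) will not be covered by the recursion and must be constructed directly by starter-adder or ad hoc methods; getting the very smallest case $n=6$, i.e.\ $\ghd(8,18)$, is delicate, though Example~\ref{FinlandExample} already exhibits a $\ghd(8,18)$, so the base of the induction is secured and the remaining gap values can be filled by analogous explicit constructions.
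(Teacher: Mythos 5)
Your proposal takes essentially the same route as the paper's proof: rule out $n\le 5$ via the necessary conditions and the nonexistence of a $\ghd(7,15)$, assemble small $\ghd(n+2,3n)$ ingredients containing a $\ghd(2,0)$ subdesign by starter-adder methods (data deferred to an appendix), then recurse via Corollary~\ref{FrameCorollary} with frames of type $h^m v^1$ and uniform frames, handling $n\ge 84$ with $h=7$ exactly as in Lemma~\ref{Intermediate range}. The only execution difference is in the seeds: the paper obtains most of its $2\times2$-holed ingredients from \emph{transitive} starter-adders over $\mathbb{Z}_{n+2}$ with $n$ even (choosing the adder to avoid $0$ and $(n+2)/2$ forces the empty $2\times2$ subarray), reserving the intransitive method for the odd seeds $n\in\{7,9,11\}$ and the Finland $\ghd^*(8,18)$ for $n=6$, whereas you propose intransitive starter-adders throughout --- a minor variation in the computational ingredients, not a different architecture.
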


Note that if $1 \leq n \leq 4$, the necessary conditions for the existence of a $\ghd(n+2,3n)$ are not satisfied.  Moreover, for $n=5$, there is 
no $\ghd(7,15)$~\cite{Mathon Vanstone}.  Thus it suffices to consider the case that $n \geq 6$.  

For relatively small values of $n$, we construct $\ghd(n+2,3n)$ mainly by starter-adder methods.  These will then be used in the Basic Frame Construction 
with $u=0$ (Corollary~\ref{FrameCorollary}) to give the remaining GHDs.  
\begin{lemma} \label{Small Cases}
\rule{0ex}{0ex}
    For all $n \in \{6, \ldots, 29\} \cup  \{31, \ldots, 34\}  \cup \{39,44\}$,  a $\ghd^*(n+2,3n)$ exists. 
Moreover, if $n$ is even, then there exists  such a design containing a $\ghd(2,0)$ as a subdesign.
\end{lemma}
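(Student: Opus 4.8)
The plan is to establish Lemma~\ref{Small Cases} by exhibiting explicit small designs and verifying them directly, since the range of $n$ is finite and relatively small. The bulk of the work will consist of constructing $\ghd^*(n+2,3n)$ for each listed $n$ via starter-adder methods, exactly as was done for the one-empty-cell case in Lemmas~\ref{revised WangDu starter adder}--\ref{StarterAdderMod5}. First I would handle the general shape: for a $\ghd(n+2,3n)$ we have $x=2$ empty cells per row and column, so in the transitive framework of Definition~\ref{Starter-Adder Definition} we take $G=\mathbb{Z}_{n+2}$ and $X=(G\times\{0,1\})\cup(\{\infty\}\times\{1,\ldots,n-4\})$, giving a pairwise hole of size $n-4$ and hence automatically the $*$-property. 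For each individual value of $n$ I would present a transitive starter and a corresponding adder (with entries collected in an appendix, mirroring the organization already used in the paper), and assert that the defining conditions of a transitive starter-adder hold; these are finite difference-counting checks that a computer search produces and verifies.

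The second component is the subdesign requirement for even $n$. Here I would arrange each even-$n$ construction so that the resulting design contains a $\ghd(2,0)$, i.e.\ a $2\times2$ empty subarray, as a subdesign on a pairwise hole. The natural route is to use the \emph{intransitive} starter-adder of Definition~\ref{IntStarter-Adder Definition} with $x=2$: by its very construction the bottom-right $x\times x = 2\times 2$ subarray is an empty subarray, which is precisely a trivial $\ghd(2,0)$. So for the even values of $n$ I would supply intransitive starter-adders (over $G=\mathbb{Z}_n$, as the paper states is its convention), which simultaneously yield the design and the embedded array hole; for these I must separately confirm that the hole is a pairwise hole, which holds because the two ``row'' and two ``column'' points together form a set whose internal pairs never appear in any block. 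For odd $n$ I may use transitive starters (no $\ghd(2,0)$ is claimed there), which is why the subdesign clause is stated only for even $n$.

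For a handful of the larger or more awkward values I expect the plain difference conditions cannot be met and I would fall back on the modified developing technique of Lemmas~\ref{StarterAdderMod2} and \ref{StarterAdderMod5}, developing the subscripts of a few infinite points modulo a small integer (2 or 5) so that pure differences may be reused without repeating pairs. Values such as $n=39$ and $n=44$, or $n$ where the straightforward search fails, are the likely candidates for this modification. In each such case I would display the starter blocks with adders in brackets, note which infinite points are swapped and under what modulus, and observe that the parity/residue structure of the swapped blocks prevents any pair from recurring as the rows are generated.

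The main obstacle will be the individual starter-adder searches themselves: there is no single uniform construction covering all of $\{6,\ldots,29\}\cup\{31,\ldots,34\}\cup\{39,44\}$, so each design must be found (by computer) and then verified against all the difference conditions. The genuine mathematical subtlety is confined to the even cases, where I must guarantee the embedded $\ghd(2,0)$ is a true subdesign on a pairwise hole rather than merely two empty cells that happen to sit in a $2\times2$ block; the intransitive construction is designed to deliver exactly this, so the proof reduces to checking that the two pairs of ``deleted'' coordinates never co-occur in a block, which follows from the partition conditions in Definition~\ref{IntStarter-Adder Definition}. I would therefore close the proof by stating that the explicit starters and adders, together with these routine verifications, establish every case, with the tabulated data relegated to the appendix.
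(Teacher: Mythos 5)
Your overall strategy (explicit starter--adder data for each value of $n$, tabulated in an appendix, exactly as for the one-empty-cell case) matches the paper's, but your handling of the even-$n$ clause has a genuine gap. The lemma demands, for even $n$, a \emph{single} design that is simultaneously a $\ghd^*(n+2,3n)$ and contains a $\ghd(2,0)$ subdesign: ``such a design'' refers back to the $\ghd^*$. Your plan is to obtain the $2\times 2$ empty subarray by switching to \emph{intransitive} starter--adders for even $n$, but that construction (over $\mathbb{Z}_n$ with point set $G\times\{0,1,2\}$ and no infinite points) carries no built-in pairwise hole of size $n-4$, and the paper explicitly warns that GHDs obtained by the intransitive method ``usually do not possess the $*$-property'' --- indeed, that is precisely why Lemma~\ref{Odd2hole} claims the $*$-property only for $n=7$ among $\{7,9,11\}$. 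So your even-$n$ designs would satisfy the subdesign requirement but not the $*$-requirement, and the lemma would remain unproven for those $n$.

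The paper's resolution, which you miss, is a simple but crucial trick inside the \emph{transitive} framework: keep $G=\mathbb{Z}_{n+2}$ (so the $n-4$ infinite points automatically form the pairwise hole, giving the $*$-property), and for even $n$ choose the adder so that it omits the two values $0$ and $(n+2)/2$. Since cell $(i,c)$ of the developed array is filled exactly when $i-c$ lies in the adder set, the four cells $(0,0)$, $(0,(n+2)/2)$, $((n+2)/2,0)$ and $((n+2)/2,(n+2)/2)$ are then all empty, and they form a $2\times 2$ empty subarray on rows and columns $\{0,(n+2)/2\}$, i.e.\ a sub-$\ghd(2,0)$, inside the very design that already has the $*$-property. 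Two smaller points: $n=6$ cannot simply be delegated to a starter--adder search --- the paper instead invokes the $\ghd^*(8,18)$ of Example~\ref{FinlandExample}, which is a DRNKTS(18) (hence $*$) and visibly has a $2\times2$ empty subsquare; and your fallback to the mod-$2$/mod-$5$ modified development for values such as $n=39,44$ is harmless but unnecessary, since the paper's appendix handles all values other than $n=6$ with ordinary transitive starter--adders.
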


\begin{proof} 
As mentioned in Section~\ref{DefnSection}, there exists a $\ghd(8,18)$~\cite{Finland}.  As such a design is equivalent to a $\mathrm{DRNKTS}(18)$, it has the $*$-property.  Note that 
the $\ghd^*(8,18)$ exhibited in Example~\ref{FinlandExample} has a $2\times2$ empty subsquare, i.e\ a sub-$\ghd(2,0)$.  For the remaining values of $n$, a  transitive starter  and adder for 
a $\ghd^*(n+2,3n)$ can be found in Appendix~\ref{starters and adders}.  Note that the group used is $\mathbb{Z}_{n+2}$.  For $n$ even, the adder does not contain 0 or $(n+2)/2$, 
thus ensuring that the $(0,0)$-cell and $(0,(n+2)/2)$-cell are empty, and as we develop, the $((n+2)/2,0)$-cell and $((n+2)/2,(n+2)/2)$-cell will also be empty, yielding a sub-$\ghd(2,0)$. 
\end{proof}

We remark that a $\ghd^*(10,24)$ with a sub-$\ghd(2,0)$ can also be obtained from~\cite[Table 8]{DuAbelWang}.

\begin{lemma} \label{Odd2hole}
\rule{0ex}{0ex}
    For $n \in \{7,9,11\}$, there exists a $\ghd(n+2,3n)$    containing a $\ghd(2,0)$ as a subdesign. Further, when $n=7$, this GHD has the $*$-property. 
\end{lemma}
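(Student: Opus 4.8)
The plan is to apply the intransitive starter-adder method of Section~\ref{SA section}. The essential observation is that, by the construction following Definition~\ref{IntStarter-Adder Definition}, any intransitive starter-adder for a $\ghd(n+x,3n)$ produces a design whose bottom-right $x\times x$ subarray is empty; taking $x=2$ yields precisely a $\ghd(2,0)$ subdesign. Thus it suffices, for each $n\in\{7,9,11\}$, to exhibit a valid intransitive starter-adder over $\mathbb{Z}_n$ with $x=2$. Note that this approach is what distinguishes Lemma~\ref{Odd2hole} from Lemma~\ref{Small Cases}: for odd $n$, the transitive construction used there is not guaranteed to contain a sub-$\ghd(2,0)$, whereas the intransitive method supplies the empty $2\times 2$ block automatically.

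For $n=7$, no new work is required. The $\ghd^*(9,21)$ already displayed in Example~\ref{SAexample7} is built from an intransitive starter-adder with $x=2$, so its bottom-right $2\times 2$ subarray is empty, giving the desired $\ghd(2,0)$ subdesign. Moreover, as noted immediately after that example, its underlying block design is a $(3,1)$-GDD of type $3^7$ with a pairwise hole of size $3$, so the design has the $*$-property; this establishes the final clause of the lemma.

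For $n=9$ and $n=11$, I would record explicit intransitive starters together with their corresponding adders over $\mathbb{Z}_9$ and $\mathbb{Z}_{11}$ (to be listed in an appendix), and then verify the conditions of Definition~\ref{IntStarter-Adder Definition}: that $S\cup R$ partitions $X=\mathbb{Z}_n\times\{0,1,2\}$, that each pure $(i,i)$-difference and each mixed $(i,j)$-difference occurs at most once across all triples of $\mathcal{ST}$, and that $(S+A)\cup C$ is again a partition of $X$. Since the $*$-property is \emph{not} claimed for these two values, there is nothing further to check beyond these starter-adder conditions, and the resulting $\ghd(n+2,3n)$ automatically contains the required $\ghd(2,0)$ subdesign.

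The only real obstacle is the search for suitable starter blocks: one must split the $3n$ points into $n+2$ triples whose intra- and inter-copy differences cover each nonzero element of $\mathbb{Z}_n$ at most once in each prescribed difference class, while simultaneously admitting an adder that re-partitions the $S$-blocks. This is a finite feasibility problem, most efficiently resolved by computer search for each of $n=9,11$; once candidate starters and adders are in hand, confirming the three difference/partition conditions of Definition~\ref{IntStarter-Adder Definition} is entirely routine.
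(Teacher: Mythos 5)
Your proposal follows the paper's proof essentially verbatim: the paper likewise constructs these three designs from intransitive starter-adders with $x=2$ over $\mathbb{Z}_n$, relies on the fact that such a construction always leaves the bottom-right $2\times 2$ subarray empty (hence a $\ghd(2,0)$ subdesign), and for $n=7$ cites Example~\ref{SAexample7} together with its pairwise hole of size $3$ to get the $*$-property. The only piece you leave as a promissory note --- explicit starters and adders for $n=9$ and $n=11$ --- is exactly what the paper supplies in Appendix~\ref{starters and adders}, so your plan is completed by precisely that finite search.
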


\begin{proof} 
  For these values of $n$, an  intransitive starter  and adder for a $\ghd^*(n+2,3n)$ can be found in Appendix~\ref{starters and adders}.  When $n=7$, 
this GHD is also given in Example~\ref{SAexample7}; there we indicated that this one has the $*$-property. 
\end{proof}

Our main recursive construction for $n \geq 84$ uses frames of type  $7^mv^1$ from Lemma~\ref{Frame 7^uv}.  For smaller $n$, we also use frames of types $h^mv^1$ with $h \in \{6,8,9,11\}$,
which also come from Lemma~\ref{Frame 7^uv}.  Note that in light of Lemma~\ref{Small Cases}, in order to prove Theorem~\ref{Existence_t=2}, we need to obtain a $\ghd(n+2,3n)$ for 
each  $n \in \{30, 35, 36, 37, 38, 40, 41, 42, 43\}$ and for all $n \geq 45$.

\begin{lemma} \label{GHDs from uniform frames}
There exists a $\ghd(n+2,3n)$ for $n \in \{30,$ $35,$ $36,$ $37,$ $38,$ $40,$ $41,$ $46,$ $49,$ $50,$ $51,$ $54,$ $55\}$.
\end{lemma}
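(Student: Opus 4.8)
The plan is to apply the Basic Frame Construction with $u=0$ (Corollary~\ref{FrameCorollary}), taking $e=2$ throughout. For a target value $n$, I would select a generalized Howell frame whose filled side equals $n$ and whose groups have sizes $g_i=3s_i$ (the $k=3$ convention), so that $\sum s_i = n$ and $\sum g_i = 3n$. Filling the frame then yields a $\ghd(n+2,3n)$ provided that: (i) for all but one group I can supply a $\ghd(s_i+2,3s_i)$ containing a trivial sub-$\ghd(2,0)$, and (ii) for the one remaining group I need only a plain $\ghd(s_m+2,3s_m)$. The small ingredient designs are exactly those furnished by Lemma~\ref{Small Cases} (a $\ghd^*(h+2,3h)$ with a sub-$\ghd(2,0)$ for every even $h$ in the relevant range) and Lemma~\ref{Odd2hole} (a $\ghd(h+2,3h)$ with a sub-$\ghd(2,0)$ for $h\in\{7,9,11\}$).

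For those $n$ that factor as $n=hu$ with $u\geq 5$ and $h\in\{6,7,8,9,10,11\}$, I would use a uniform frame of type $h^u$, which exists by Theorem~\ref{Uniform Frames} (none of the pairs $(h,u)$ arising here lies in the excluded set). Explicitly, I would take $6^5$ for $n=30$, $7^5$ for $n=35$, $6^6$ for $n=36$, $8^5$ for $n=40$, $7^7$ for $n=49$, $10^5$ for $n=50$, $9^6$ for $n=54$, and $11^5$ for $n=55$. In each case all groups share the same size $h$, and since the required $\ghd(h+2,3h)$ with a sub-$\ghd(2,0)$ is available from Lemma~\ref{Small Cases} or Lemma~\ref{Odd2hole}, the hypotheses of Corollary~\ref{FrameCorollary} are met (the one exceptional group needs only the weaker plain ingredient, which the sub-design version already provides).

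The remaining values $n\in\{37,38,41,46,51\}$ are prime or of the form $2p$, and admit no factorization $hu$ with $u\geq 5$ and a usable uniform block size; for these I would instead use a non-uniform frame of type $h^mv^1$ drawn from Lemma~\ref{Frame 7^uv}. Concretely: $6^5 7^1$ for $n=37$, $6^5 8^1$ for $n=38$, $7^5 6^1$ for $n=41$, $8^5 6^1$ for $n=46$, and $9^5 6^1$ for $n=51$; one checks that each $(h,m,v)$ satisfies the inequalities in Lemma~\ref{Frame 7^uv}. The key point is to designate the single size-$v$ group as the exceptional group $m$ in Corollary~\ref{FrameCorollary}, so that it requires only a plain $\ghd(v+2,3v)$ (here $v\in\{6,7,8\}$, all available from Lemma~\ref{Small Cases}), while the $m$ repeated size-$h$ groups are the ones needing the sub-$\ghd(2,0)$ ingredient, which exists for each $h\in\{6,7,8,9\}$ by Lemma~\ref{Small Cases} or Lemma~\ref{Odd2hole}.

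The main obstacle is organizational rather than deep: for each $n$ one must choose a frame type all of whose group sizes correspond to ingredient GHDs that actually exist with the needed hole structure. In particular the decomposition must avoid group sizes $s_i\in\{1,2\}$, since $\ghd(3,3)$ and $\ghd(4,6)$ violate the necessary conditions and hence do not exist; this is precisely why the prime and $2p$ values of $n$ force the non-uniform frames above, and why the exceptional (last) group is assigned a size $v\geq 6$. Once the frame type is fixed, verifying the hypotheses of Corollary~\ref{FrameCorollary} reduces to citing Lemmas~\ref{Small Cases} and~\ref{Odd2hole} for the small designs and Theorem~\ref{Uniform Frames} or Lemma~\ref{Frame 7^uv} for the frame itself.
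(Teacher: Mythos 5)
Your proof is correct, and for eleven of the thirteen values it coincides with the paper's own: the same uniform frames $6^5$, $7^5$, $6^6$, $8^5$, $7^7$, $9^6$, $11^5$ (Theorem~\ref{Uniform Frames}) and the same frames $6^57^1$, $6^58^1$, $7^56^1$, $8^56^1$ (Lemma~\ref{Frame 7^uv}), filled via Corollary~\ref{FrameCorollary} with the ingredient designs of Lemmas~\ref{Small Cases} and~\ref{Odd2hole}. You diverge at $n=50$ and $n=51$: the paper uses frames of types $7^68^1$ and $7^69^1$, obtained from Lemma~\ref{n=68 lemma} with $h=m=7$, whereas you use the uniform frame $10^5$ (Theorem~\ref{Uniform Frames}) and the frame $9^56^1$ (Lemma~\ref{Frame 7^uv}, case~3, noting $(h,v)=(9,6)\neq(9,1)$). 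Both substitutes are legitimate: $10^5$ requires a $\ghd(12,30)$ containing a sub-$\ghd(2,0)$, which Lemma~\ref{Small Cases} supplies since $n=10$ is even, and $9^56^1$ requires a $\ghd(11,27)$ with a sub-$\ghd(2,0)$ (Lemma~\ref{Odd2hole}) together with a plain $\ghd(8,18)$ (Lemma~\ref{Small Cases}). The net effect is that your argument dispenses with Lemma~\ref{n=68 lemma} entirely, at the modest cost of one extra ingredient design, $\ghd(12,30)$; this is a mild simplification of the paper's proof. One cosmetic slip: $51=3\cdot 17$ is neither prime nor twice a prime, though your underlying point stands --- $n=51$ admits no factorization $hu$ with $u\geq 5$ and a usable $h$, since $h=3$ would demand a $\ghd(5,9)$, which violates the necessary conditions.
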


\begin{proof}
We apply Corollary~\ref{FrameCorollary} to construct the required designs.  For $n=30,$ $35$, $36$, $40$, $49$, $54$ and $55$,  we use  GHFs of types 
$6^5$, $7^5$, $6^6$,  $8^5$, $7^7$, $9^6$ and $11^5$ respectively. These frames all exist by Theorem~\ref{Uniform Frames}.   For $n=37$, $38$, 
$41$ and $46$, we use GHFs of types $6^5 7^1$, $6^5 8^1$, $7^5 6^1$ and  $8^5 6^1$  respectively (these all exist by  Lemma~\ref{Frame 7^uv}). For 
$n=50$ and $51$, we use GHFs of types $7^6 8^1$ and $7^6 9^1$ respectively (these  exist by  Lemma~\ref{n=68 lemma}  with $h=m=7$, $x=2$ and 
$w_1 = w = 1$ or $2$).  Since there exist $\ghd(8,18)$,  $\ghd(9,21)$, $\ghd(10,24)$, $\ghd(11,27)$ and  $\ghd(13,33)$ each containing 
a sub-$\ghd(2,0)$ (by Lemmas~\ref{Small Cases} and  \ref{Odd2hole}) the result follows.
\end{proof}

\begin{lemma} \label{Intermediate range}
There exists a $\ghd(n+2,3n)$ if $n \in \{42, 43, 47, 48, 52, 53\}$  or $n \geq 56$.
\end{lemma}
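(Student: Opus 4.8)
The plan is to establish existence of a $\ghd(n+2,3n)$ for the stated values by the same recursive strategy used throughout this section, namely Corollary~\ref{FrameCorollary} (the Basic Frame Construction with $u=0$) applied to frames of type $h^m v^1$ supplied by Lemma~\ref{Frame 7^uv}. For each target $n$ I would write $n = hm + v$ with parameters $(h,m,v)$ lying in the admissible ranges of Lemma~\ref{Frame 7^uv}, so that a GHF of type $h^m v^1$ exists. To feed this into Corollary~\ref{FrameCorollary} I need, for each group of size $3h$ among the first $m$ groups, a $\ghd(h+2,3h)$ containing a trivial sub-$\ghd(2,0)$, and for the final group of size $3v$, a $\ghd(v+2,3v)$ (no sub-design required on the last group). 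The ``$+2$'' here is exactly the number $e=2$ of added rows and columns, matching the two empty cells per row and column in the target design.

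First I would handle the genuinely infinite part, $n \geq 84$, following the template of Lemma~\ref{t=1 recursion}. For such $n$ I would write $n = 7m + v$ with $m \geq 11$ odd and $7 \leq v \leq 20 \leq 2(m-1)$, so Lemma~\ref{Frame 7^uv}(2) (or the $h=7$ case) guarantees a GHF of type $7^m v^1$. The ingredient on each size-$21$ group is a $\ghd(9,21)$ with a sub-$\ghd(2,0)$, which exists by Lemma~\ref{Odd2hole}; the ingredient on the size-$3v$ group is a $\ghd(v+2,3v)$, which exists by Lemma~\ref{Small Cases} since every such $v$ lies in $\{7,\ldots,20\}$. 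Corollary~\ref{FrameCorollary} then yields the $\ghd(n+2,3n)$. Second, for the finitely many intermediate values $n \in \{42,43,47,48,52,53\}$ and $56 \leq n \leq 83$, I would exhibit a small table (mirroring Table~\ref{ghd(n+1,3n) Frame Table}) listing for each $n$ a suitable decomposition $n = hm+v$ with $h \in \{6,7,8,9,11\}$. In each row I must check that the frame $h^m v^1$ exists via Lemma~\ref{Frame 7^uv}, that a $\ghd(h+2,3h)$ with a sub-$\ghd(2,0)$ exists, and that a $\ghd(v+2,3v)$ exists, both from Lemmas~\ref{Small Cases} and~\ref{Odd2hole}.

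The main obstacle is the bookkeeping on the sub-$\ghd(2,0)$ requirement: Corollary~\ref{FrameCorollary} demands that the ingredient on each of the first $m-1$ groups contain a trivial $\ghd(2,0)$ subdesign, so I must only use values of $h$ (and base designs) for which Lemma~\ref{Small Cases} or Lemma~\ref{Odd2hole} actually delivers that sub-design. Lemma~\ref{Small Cases} provides such sub-designs precisely for $h$ even, while Lemma~\ref{Odd2hole} covers the odd cases $h \in \{7,9,11\}$; hence the decompositions must be chosen so that every $h$ used lies in $\{6,7,8,9,11\}$, and I should prefer odd $h$ with $m$ odd so the ranges in Lemma~\ref{Frame 7^uv} accommodate the needed $v$. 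The verification is otherwise routine arithmetic—confirming each $n$ in the stated set is realized by some admissible $(h,m,v)$—so the proof reduces to stating the $n \geq 84$ construction explicitly and then pointing to a frame table covering the remaining finitely many cases, exactly as in the $\ghd(n+1,3n)$ argument.
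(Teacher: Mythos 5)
Your proposal is correct and follows essentially the same route as the paper: both write $n=hm+v$ with $h\in\{7,8,9,11\}$ (you also allow $h=6$), invoke Lemma~\ref{Frame 7^uv} for the frame of type $h^mv^1$, and apply Corollary~\ref{FrameCorollary} with ingredient designs $\ghd(h+2,3h)$ containing a sub-$\ghd(2,0)$ (from Lemmas~\ref{Small Cases} and~\ref{Odd2hole}) together with a $\ghd(v+2,3v)$ on the last group. Your split into the case $n\geq 84$ via $7m+v$ and a finite table for the remaining values mirrors exactly how the paper handles it by reference to Lemma~\ref{t=1 recursion}.
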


\begin{proof}
We write $n$ in one of the forms   $7m+v$, $8m+v$, $9m+v$  or $11m+v$ (where (1) $m \geq 5$ and either $m$ is odd or $m=8$, (2) $7 \leq v \leq$ min$(2(m-1), 20$)) 
in the same manner as we did in Lemma~\ref{t=1 recursion}.  We then construct a frame of type  $7^mv^1$, $8^mv^1$, $9^m v^1$ or $11^mv^1$ from 
  Lemma~\ref{Frame 7^uv}  together with a $\ghd(9,21)$, $\ghd(10,24)$, $\ghd(11,27)$ or $\ghd(13,33)$ (each containing 
a $\ghd(2,0)$ as a subdesign) and a $\ghd(v+2,3v)$   (all of these exist by Lemma~\ref{Small Cases}  or Lemma~\ref{Odd2hole}). 
Applying  Corollary~\ref{FrameCorollary},  using these frames and GHDs then produces the required $\ghd(n+2,3n)$.  
\end{proof}

Lemmas~\ref{Small Cases}, \ref{Odd2hole},  \ref{GHDs from uniform frames} and  \ref{Intermediate range}  together 
prove Theorem~\ref{Existence_t=2}.

\section{GHDs across the spectrum} \label{Spectrum}

In a non-trivial $\ghd(s,v)$ (i.e.\ where $v\neq 0)$, we require that $2s+1 \leq v \leq 3s$.  A $\ghd(s,3s)$ has no empty cells, while a $\ghd(s,2s+1)$ has $(s-1)/3$ empty cells in each row and column.  
Noting that $\lim_{s\rightarrow\infty} \frac{s-1}{3s} = \frac{1}{3}$, we see that the proportion of cells in a given row or column which are empty falls in the interval $[0,1/3)$.  In this section, 
we prove that for any $\pi \in [0,5/18]$, there is a GHD whose proportion of empty cells in a row or column is arbitrarily close to $\pi$.  

Our main tool in this section is Lemma~\ref{Stinson 1} and its variant, Lemma~\ref{Stinson 1 TD}.  As an ingredient for this construction, we require GHDs which have the $*$-property.  
We note that GHDs constructed by the Basic Frame Construction do not always have the $*$-property, even if the input designs do.\footnote{%
GHDs constructed by the Basic Frame Construction do have the $*$-property if (1) the frame used is a $\ghf_k$ of type $(s_1,g_1), \ldots, (s_n, g_n)$ with $g_i = (k-1)s_i$ for $i=1, \ldots, n$ and (2) for $i=1, \ldots n$, the input designs are $\ghd(s_i, g_i + t)$s with a pairwise hole of size $t$ for some $t$.  However, condition (1) is not satisfied by the frames in this paper.}
Thus, in general, we cannot use the results of Section~\ref{TwoEmptySection} for this purpose.  However, as previously noted, those GHDs constructed by transitive starter-adder methods do have the $*$-property.

\begin{lemma} \label{1-empty-cell}
Let $m \geq 6$.  There exists a $\mathrm{GHD}^*(2^m,3\cdot 2^m-3)$.
\end{lemma}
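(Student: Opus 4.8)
The plan is to realize this entire family by a single application of the resolvable--transversal--design form of the Stinson-type construction, Lemma~\ref{Stinson 1 TD} (the $\rtd$ specialization of Lemma~\ref{Stinson 1}), arranged so that the construction feeds back on itself and yields an induction on $m$. First I would observe that a $\ghd^*(2^m,3\cdot 2^m-3)$ has $n=v/3=2^m-1$ filled cells per line, hence exactly one empty cell in each row and column; these are therefore special $\ghd^*(N+1,3N)$ with $N=2^m-1$, but carrying the $*$-property, which the generic constructions of Section~\ref{TwoEmptySection} do not guarantee. So I would not reuse Theorem~\ref{Existence_t=1}, but instead build the pairwise hole explicitly through Lemma~\ref{Stinson 1 TD}, which preserves the $*$-property by construction.

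Concretely, I would apply Lemma~\ref{Stinson 1 TD} with $n=4$ and $g=2^{m-2}$, so that $ng=2^m$. The ingredient $\rtd(4,g)$ is equivalent to $3~\mols(g)$, which exist since $g=2^{m-2}\ge 16$ (indeed $2^{m-2}$ is a prime power). For the $g$ resolution classes of $4$-blocks I would use the smallest admissible square design $\ghd^*(4,12)$, which exists by Theorem~\ref{GHD*}(ii) as $4\neq 2,3$, forcing each $u_i=3$; for the class of groups I would take the outer design $\ghd^*(g,3g-3)$, which requires $u_{g+1}=g-4\ge 0$. The key point is that this outer ingredient is exactly $\ghd^*(2^{m-2},3\cdot 2^{m-2}-3)$, i.e.\ the very statement being proved, one step down in $m$. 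A short computation then gives $u=\sum_{i=1}^{g+1}u_i=3g+(g-4)=4g-4=2^m-4$, whence Lemma~\ref{Stinson 1 TD} outputs a $\ghd^*(ng,2ng+u+1)=\ghd^*(2^m,12g-3)=\ghd^*(2^m,3\cdot 2^m-3)$, as required. Since $n=4$ is the smallest usable block size (Theorem~\ref{GHD*}(ii) rules out $\ghd^*(2,6)$ and $\ghd^*(3,9)$), the induction necessarily steps by $2$.

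It then remains only to supply base cases, and I would take these to be $m=6$ and $m=7$; as the recursion decreases $m$ by $2$, these two cover all $m\ge 6$. Here the outer ingredient is $\ghd^*(2^{m-2},3\cdot 2^{m-2}-3)$ with $m-2\in\{4,5\}$, namely $\ghd^*(16,45)$ and $\ghd^*(32,93)$; each is a $\ghd^*(N+1,3N)$ with $N=2^{m-2}-1\in\{15,31\}$ odd, and so exists by Lemma~\ref{revised WangDu starter adder}. The only genuine obstacle is verifying that the parameters line up exactly --- that the forced values $u_i=3$ and $u_{g+1}=g-4$ sum to the precise $u=2^m-4$ needed to convert $2ng+u+1$ into $3\cdot 2^m-3$ --- and confirming availability of all three ingredient families ($3~\mols(2^{m-2})$, $\ghd^*(4,12)$, and the smaller $\ghd^*$); once these checks are in place the result follows by induction on $m$.
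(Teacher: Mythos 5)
Your proposal is correct: all three ingredient families exist as you claim, the arithmetic $u = 3g+(g-4) = 2^m-4$ and $2ng+u+1 = 12g-3 = 3\cdot 2^m-3$ is right, and the induction is well-founded on the base cases $m=6,7$, where $\ghd^*(16,45)$ and $\ghd^*(32,93)$ do come from Lemma~\ref{revised WangDu starter adder} since $n=15,31$ are odd and lie in $[7,33]$. It is not, however, the paper's argument. The paper also applies Lemma~\ref{Stinson 1 TD}, but with an $\rtd(8,2^{m-3})$ rather than your $\rtd(4,2^{m-2})$: it sets $u_i=7$ on $2^{m-3}-1$ of the block classes (ingredient $\ghd^*(8,24)$, Theorem~\ref{GHD*}(ii)), $u_i=4$ on the one remaining block class (ingredient $\ghd^*(8,21)$, Lemma~\ref{revised WangDu starter adder}), and $u_{g+1}=2^{m-3}-1$ on the group class (ingredient $\ghd^*(2^{m-3},3\cdot 2^{m-3})$, Theorem~\ref{GHD*}(ii)), giving the same total $u=2^m-4$. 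In other words, the paper absorbs the deficit of three points into a single block class, so every ingredient is available outright and the proof is direct and uniform in $m\geq 6$; you absorb the deficit into the group class, which makes the construction consume the lemma itself at $m-2$ and turns the proof into an induction with step $2$. What your route buys: a lighter ingredient list --- only $3~\mols(2^{m-2})$ and the single small design $\ghd^*(4,12)$, versus the $7~\mols(2^{m-3})$ implicit in the paper's $\rtd(8,2^{m-3})$ (indeed your construction even reaches $m=5$, taking $\ghd^*(8,21)$ as the outer ingredient, where the paper's RTD does not exist). What it costs: the two external base cases and the inductive bookkeeping. Both proofs are valid; one minor quibble is that your closing remark that the step size $2$ is ``necessary'' because $n=4$ is the smallest usable block size is beside the point --- larger block sizes give other valid step sizes (the paper's choice $n=8$ in effect steps by $3$) --- but this does not affect correctness.
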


\begin{proof}
Since $2^{m-3}$ is a prime power and $m \geq 6$, there is an $\rtd(8,2^{m-3})$, with disjoint parallel classes $P_1, P_2, \ldots, P_{2^{m-3}}$.  Form a $\pbd(2^m,\{2^{m-3},8\},1)$ by adding a single 
parallel class $P_{2^{m-3}+1}$ consisting of the groups of the RTD.  For the parallel class $P_{2^{m-3}+1}$, set $u_{2^{m-3}+1} =2^{m-3}-1$.  For parallel classes $P_i$ with $1  \leq i \leq  2^{m-3}-1$, 
let $u_i=7$,  and for parallel  class $P_{2^{m-3}}$, let $u_{2^{m-3}}=4$.  Since there exist a $\mathrm{GHD}^*(2^{m-3},3\cdot 2^{m-3})$ and a $\mathrm{GHD}^*(8,24)$ (both by Theorem~\ref{GHD*}(ii)), as 
well as  a $\mathrm{GHD}^*(8,21)$ (by Lemma~\ref{revised WangDu starter adder}), and 
\[
2 \cdot 2^m+1 + \left((2^{m-3}-1) + (2^{m-3}-1)(7)+4\right) = 3 \cdot 2^m-3
\]
it follows by Lemma~\ref{Stinson 1 TD} that there exists a $\ghd^*(2^m,3 \cdot 2^m-3)$.
\end{proof}

\begin{lemma} \label{2-empty-cell}
Let $m \geq 6$.  There exists a $\mathrm{GHD}^*(2^m,3\cdot 2^m-6)$.
\end{lemma}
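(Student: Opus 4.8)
The plan is to follow the proof of Lemma~\ref{1-empty-cell} essentially verbatim, changing only the choice of the auxiliary parameters $u_i$ so that the running total decreases by an extra $3$. Since $m\geq 6$, the integer $2^{m-3}$ is a prime power with $2^{m-3}\geq 8$, so there are $2^{m-3}-1\geq 7$ MOLS of side $2^{m-3}$ and hence an $\rtd(8,2^{m-3})$ with disjoint parallel classes $P_1,\ldots,P_{2^{m-3}}$. Adjoining the parallel class $P_{2^{m-3}+1}$ consisting of the groups produces a resolvable $\pbd(2^m,\{2^{m-3},8\},1)$, and the whole construction will be a single application of Lemma~\ref{Stinson 1 TD} with $n=8$ and $g=2^{m-3}$. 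Because that lemma outputs a $\ghd^*(2^m,2\cdot 2^m+u+1)$ and the target here is $\ghd^*(2^m,3\cdot 2^m-6)$, I need the total $u=2^m-7$, which is exactly $3$ less than the value $u=2^m-4$ used for Lemma~\ref{1-empty-cell}.

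The cleanest way to remove these extra $3$ units is to move one additional block class from value $7$ down to value $4$. Concretely, for $2^{m-3}-2$ of the block classes set $u_i=7$ (each calling for a $\ghd^*(8,24)$), for the remaining two block classes set $u_i=4$ (each calling for a $\ghd^*(8,21)$), and for the group class keep $u_{2^{m-3}+1}=2^{m-3}-1$ (calling for a $\ghd^*(2^{m-3},3\cdot 2^{m-3})$). All of these ingredients exist: the $\ghd^*(8,24)$ and $\ghd^*(2^{m-3},3\cdot 2^{m-3})$ by Theorem~\ref{GHD*}(ii), since both $8$ and $2^{m-3}\geq 8$ differ from $2$ and $3$, and the $\ghd^*(8,21)$ by Lemma~\ref{revised WangDu starter adder} (the case $n=7$). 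As $g=2^{m-3}\geq 8\geq 2$, there are comfortably enough block classes to carry two copies of the value $4$.

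Finally I would check the arithmetic: with these choices
\[
2 \cdot 2^m + 1 + \left((2^{m-3}-1) + 7(2^{m-3}-2) + 8\right) = 3 \cdot 2^m - 6,
\]
so Lemma~\ref{Stinson 1 TD} yields the desired $\ghd^*(2^m,3\cdot 2^m-6)$. I expect no genuine obstacle, since every ingredient is available unconditionally throughout the range $m\geq 6$; the only point meriting a second's thought is that the reduction be absorbable by the block classes, which it clearly is. One could alternatively leave the block classes as in Lemma~\ref{1-empty-cell} and instead lower the group-class parameter to require a $\ghd^*(2^{m-3},3\cdot 2^{m-3}-3)$, but that smaller ingredient is itself only guaranteed by Lemma~\ref{1-empty-cell} once $m-3\geq 6$, so adjusting the block classes is preferable for small $m$.
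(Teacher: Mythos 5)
Your proof is correct and follows the paper's route exactly: apply Lemma~\ref{Stinson 1 TD} to the resolvable $\pbd(2^m,\{2^{m-3},8\},1)$ built from an $\rtd(8,2^{m-3})$, as in Lemma~\ref{1-empty-cell}, adjusting the $u_i$ so that the total drops by $3$. The only difference is where the adjustment is made. The paper lowers the single exceptional block class from $u=4$ to $u=1$, which requires a $\ghd^*(8,18)$ (the design of Example~\ref{FinlandExample}, i.e.\ the computationally found DRNKTS$(18)$ of~\cite{Finland}); you instead lower a second block class from $u=7$ to $u=4$, so that two classes call for a $\ghd^*(8,21)$ and none for a $\ghd^*(8,18)$. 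Your arithmetic is right: $u=7(2^{m-3}-2)+8+(2^{m-3}-1)=2^m-7$, giving $3\cdot 2^m-6$ points; all ingredients exist by the results you cite, and $2^{m-3}\geq 8$ leaves ample room for two classes with $u_i=4$. Your variant has the mild advantage of relying only on starter-adder and MOLS-based ingredients ($\ghd^*(8,21)$ and $\ghd^*(8,24)$) rather than the classification result behind $\ghd^*(8,18)$, at no cost in generality.
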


\begin{proof}
The proof is similar to that of Lemma~\ref{1-empty-cell}, except that we take $u_{2^{m-3}}=1$ rather than 4, which requires a $\ghd^*(8,18)$ (given in Example~\ref{FinlandExample}) rather than a $\ghd^*(8,21)$.
\end{proof}

With these ingredients in hand, we now construct GHDs with side length a power of 2.

\begin{lemma} \label{Power-2-even}
Let $m \geq 7$ be odd, and let $A=\frac{5}{36}\cdot 2^{2m}+\frac{5}{18}\cdot 2^m-\frac{19}{9}$.  For all $0 \leq \alpha \leq A$,
there exists a $\mathrm{GHD}^*(2^{2m},3 \cdot 2^{2m}-6\alpha)$.  
\end{lemma}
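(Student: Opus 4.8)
The plan is to use Lemma~\ref{Stinson 1 TD} repeatedly, building $\ghd^*(2^{2m}, 2\cdot 2^{2m}+1+u)$ for a range of total deficiencies $u$, where the target $\ghd^*(2^{2m}, 3\cdot 2^{2m}-6\alpha)$ corresponds to the requirement that $u = 2^{2m}-2-6\alpha$. First I would note that since $2m$ is even and $m\geq 7$, both $n=2^{m}$ and $g=2^{m}$ are powers of $2$ satisfying the hypotheses, so an $\rtd(2^m,2^m)$ exists (equivalently, $2^m-1\geq 2$ MOLS of side $2^m$, which hold since $2^m$ is a prime power). Applying Lemma~\ref{Stinson 1 TD} with $n=g=2^m$ gives a resolvable $\pbd(2^{2m},\{2^m\},1)$ with $2^m$ parallel classes $P_1,\ldots,P_{2^m}$ of blocks of size $2^m$, plus one parallel class $P_{2^m+1}$ of size $2^m$ consisting of groups. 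For each class $P_i$ ($1\leq i\leq 2^m+1$) we assign a nonnegative deficiency $u_i$ for which a $\ghd^*(2^m, 2\cdot 2^m+1+u_i)$ exists (for $i\leq 2^m$), and a $\ghd^*(2^m, 2\cdot 2^m+1+u_{2^m+1})$ for the final class; the resulting design is a $\ghd^*(2^{2m}, 2\cdot 2^{2m}+1+u)$ with $u=\sum u_i$.

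The heart of the argument is to show that, by distributing deficiencies appropriately across the $2^m+1$ classes, every value of $u$ of the correct parity in the required range is achievable. The usable per-class values $u_i$ are precisely those for which a $\ghd^*(2^m, 2\cdot 2^m+1+u_i)$ exists; by Theorem~\ref{GHD*}(ii) the value $u_i=2^m-1$ is available (a $\ghd^*(2^m,3\cdot 2^m)$ coming from MOLS), and Lemmas~\ref{1-empty-cell} and~\ref{2-empty-cell} supply the intermediate deficiencies $u_i = 2^m-4$ and $u_i=2^m-7$ when $m$ itself is replaced by the base parameter; more usefully, the classes themselves can be of power-of-two side, so one can recurse. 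Since $6\alpha$ ranges over multiples of $6$ from $0$ up to $6A = \tfrac{5}{6}2^{2m}+\tfrac{5}{3}2^m-\tfrac{38}{3}$, I would verify that $u=2^{2m}-2-6\alpha$ then ranges down from $2^{2m}-2$ to roughly $\tfrac{1}{6}2^{2m}$, i.e.\ the proportion of empty cells tends to $\tfrac{5}{18}$ at the extreme $\alpha=A$. The key step is a counting/bookkeeping lemma: the maximum total deficiency $\sum u_i$ with each $u_i\leq 2^m-1$ is $(2^m+1)(2^m-1)=2^{2m}-1$, comfortably exceeding $2^{2m}-2$, while choosing all $u_i$ small (using the $\ghd^*(2^m,2\cdot2^m+1+u_i)$ with minimal deficiency) drives $u$ down to the $\tfrac{5}{18}$ threshold.

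I expect the main obstacle to be the \emph{granularity and parity} of the achievable values of $u$: Lemma~\ref{Stinson 1 TD} only produces values $u=\sum u_i$ where each $u_i$ lies in the discrete set of deficiencies for which a suitable $\ghd^*(2^m,\cdot)$ is known to exist, so I must show this set is rich enough that all multiples of $6$ in the target window are hit. The cleanest way is to exhibit explicitly a small menu of per-class deficiencies---for instance $u_i\in\{1,4,7,2^m-1\}$ obtained from Example~\ref{FinlandExample}, Lemma~\ref{revised WangDu starter adder}, and Theorem~\ref{GHD*}(ii) at the base level, suitably scaled---whose nonnegative integer combinations across $2^m+1$ classes cover every residue needed. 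I would therefore carry out the construction in three steps: (i) fix the $\rtd$ and $\pbd$ skeleton; (ii) prove an arithmetic covering lemma showing that $\{u : \ghd^*(2^m,2\cdot2^m+1+u)\text{ exists}\}$ contains a complete set of residues modulo the relevant modulus, so that $\sum u_i$ sweeps every admissible $u\equiv 2^{2m}-2\pmod 6$ in $[\,2^{2m}-2-6A,\ 2^{2m}-2\,]$; and (iii) invoke Lemma~\ref{Stinson 1 TD} to assemble the $\ghd^*(2^{2m},3\cdot2^{2m}-6\alpha)$ for each such $\alpha$. The upper bound $A$ is exactly the point where the smallest feasible total deficiency is reached, so verifying that $6A$ matches that minimum---rather than overshooting it---is the delicate endpoint check.
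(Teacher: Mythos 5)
Your overall skeleton does match the paper's: both start from a resolvable $\pbd(2^{2m},\{2^m\},1)$ with $2^m+1$ parallel classes (your $\rtd(2^m,2^m)$-plus-groups skeleton is exactly the affine plane the paper uses) and then apply the Stinson-style construction (Lemma~\ref{Stinson 1}) with varying per-class deficiencies $u_i$. However, there are two genuine gaps. First, your menu of admissible per-class deficiencies is wrong at the level where it matters. The values $1,4,7$ you cite are deficiencies relative to side $8$ --- that is what Example~\ref{FinlandExample}, Lemma~\ref{revised WangDu starter adder} and Theorem~\ref{GHD*}(ii) actually give ($\ghd^*(8,18)$, $\ghd^*(8,21)$, $\ghd^*(8,24)$); they are the ingredients \emph{inside} Lemmas~\ref{1-empty-cell} and~\ref{2-empty-cell}, and what those lemmas deliver at side $2^m$ is $u_i\in\{2^m-4,\,2^m-7\}$, alongside $u_i=2^m-1$ from Theorem~\ref{GHD*}(ii). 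With only these, the smallest achievable total deficiency is $\sum u_i\geq(2^m+1)(2^m-7)=2^{2m}-6\cdot2^m-7$, i.e.\ you can delete only $O(2^m)$ points from $3\cdot 2^{2m}$ --- nowhere near $6A\approx\frac{5}{6}\cdot 2^{2m}$. The indispensable ingredient is $u_i=1$ at side $2^m$, i.e.\ a $\ghd^*(2^m,2\cdot 2^m+2)$, which comes from Theorem~\ref{GHD*}(i) (the DRNKTS end of the spectrum; note $2^{m+1}+2\equiv 0\pmod 3$ precisely because $m$ is odd). ``Example~\ref{FinlandExample} suitably scaled'' does not provide it: the Moore--MacNeish product scales both parameters (giving $\ghd(8n,18n)$), so it never produces $v=2s+2$ for $s=2^m$.

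Second, the covering argument --- which is the real content of the lemma and the only place the specific value of $A$ can come from --- is left as a declared ``arithmetic covering lemma'' rather than proved, and your sketch of it (a complete residue system modulo the relevant modulus) is not the right statement. The paper's argument takes $x$ classes with $u_i=1$, $y$ classes with $u_i=2^m-1$, and the remaining $2^m+1-x-y$ classes with $u_i=2^m-7$; for fixed $x$, varying $y$ sweeps an arithmetic progression of step $6$, and one must check that the intervals for consecutive values of $x$ overlap, i.e.\ $f(m,x,0)\leq f(m,x+1,2^m-x)$, which holds exactly for $x\leq x_0=\frac{5}{6}\cdot 2^m+\frac{4}{3}$. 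The bound $A$ is then read off from $f(m,x_0+1,0)=3\cdot 2^{2m}-6A$; residue coverage alone gives no bound at all, and your proposal gives no mechanism for deriving $A$. (There is also a small slip: $2\cdot 2^{2m}+1+u=3\cdot 2^{2m}-6\alpha$ gives $u=2^{2m}-1-6\alpha$, not $2^{2m}-2-6\alpha$; with the correct value, $\alpha=0$ forces every class to take $u_i=2^m-1$, matching the maximum $(2^m+1)(2^m-1)=2^{2m}-1$ exactly rather than ``comfortably exceeding'' it.)
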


\begin{proof}
Since $2^m$ is a prime power, there is a $\pbd(2^{2m},\{2^m\},1)$ (an affine plane of order $2^m$); note that the number of parallel classes is $2^m+1$.  Let $x$ and $y$ be integers with $0 \leq x, y \leq 2^m+1$ and $x+y \leq 2^{m}+1$.  In Lemma~\ref{Stinson 1}, for $x$ parallel classes take $u_i=1$, for $y$ parallel classes take $u_i=2^m-1$, and for the remaining $2^m+1-x-y$ parallel classes take $u_i=2^m-7$.  Note that there exist a $\ghd^*(2^m, 2 \cdot 2^m + 2)$ (by Theorem~\ref{GHD*}(i)), a $\ghd^*(2^m, 3 \cdot 2^m)$ (by Theorem~\ref{GHD*}(ii)) and a $\ghd^*(2^m, 3 \cdot 2^m-6)$ (by Lemma~\ref{2-empty-cell}).  Thus, by Lemma~\ref{Stinson 1}, there is a $\ghd^*(2^{2m}, 2 \cdot 2^{2m}+1+x+y(2^m-1)+(2^m+1-x-y)(2^m-7))$.  Note that the number of points is
\[
2 \cdot 2^{2m}+1+x+y(2^m-1)+(2^m+1-x-y)(2^m-7) = 3 \cdot 2^{2m} -6(1+2^{m})-x(2^m-8)+6y.
\]
Let $f(m,x,y)$ denote this number of points.  For a fixed $x$, varying $y$ between 0 and $2^m+1-x$ gives all values of the number of points congruent to $0\pmod{6}$ between $f(m,x,0)$ and $f(m,x,2^m+1-x)$.  Noting that for fixed $m$, $f(m,x,y)$ is linear in $x$ and $y$, and solving $f(m,x,0)=f(m,x+1,2^m+1-(x+1))$ for $x$, we obtain the solution $x_0=\frac{5}{6}2^m+\frac{4}{3}$ (which is an integer since $m$ is odd).  For $x \leq x_0$, we have that $f(m,x,0) \leq f(m,x+1,2^m+1-(x+1))$, which means that we cover all possible values for the number of points congruent to $0\pmod{6}$ from $f(m,x_0+1,0)$ to $3 \cdot 2^{2m}$.  Moreover, 
\begin{eqnarray*}
f(m,x_0+1,0) &=& 3\cdot 2^{2m}-6(1+2^m)-\left(\frac{5}{6} \cdot 2^m+\frac{7}{3}\right)(2^m-8) \\
&=& 3 \cdot 2^{2m} - \left(6 + 6 \cdot 2^m + \frac{5}{6} \cdot 2^{2m}-\frac{20}{3} \cdot 2^m  +\frac{7}{3} \cdot 2^m - \frac{56}{3}\right) \\
&=& 3 \cdot 2^{2m} - \left( \frac{5}{6} \cdot 2^{2m} + \frac{5}{3} \cdot 2^{m} - \frac{38}{3}\right) \\
&=& 3 \cdot 2^{2m} - 6 \left( \frac{5}{36} \cdot 2^{2m} + \frac{5}{18} \cdot 2^m - \frac{19}{9} \right) \\
&=& 3 \cdot 2^{2m} - 6A,
\end{eqnarray*}  
and so the result is verified.
\end{proof}

\begin{lemma} \label{Power-2-odd}
Let $m \geq 7$ be odd, and let $A'=\frac{5}{36} \cdot 2^{2m} + \frac{1}{9} \cdot 2^m - \frac{23}{18}$.  For all $1 \leq \alpha \leq A'$, there exists a $\ghd(2^{2m},3\cdot 2^{2m}-6\alpha+3)$.
\end{lemma}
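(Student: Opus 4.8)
The plan is to mirror the proof of Lemma~\ref{Power-2-even} almost verbatim, the only essential change being dictated by the congruence $v \equiv 3 \pmod 6$ of the target designs. As in that lemma, I would start from the affine plane of order $2^m$, a resolvable $\pbd(2^{2m},\{2^m\},1)$ with $2^m+1$ parallel classes, and feed it into Lemma~\ref{Stinson 1} using ingredient designs $\ghd^*(2^m,\, 2\cdot 2^m+1+u_i)$ on the blocks. The reason $m$ odd must be handled separately is precisely that, when $m$ is odd, $2^m \equiv 2 \pmod 3$, so $2\cdot 2^m+1 \equiv 2 \pmod 3$ and no Kirkman square $\ghd^*(2^m,\, 2\cdot 2^m+1)$ exists; every ingredient used in Lemma~\ref{Power-2-even} has $v \equiv 0 \pmod 6$, which is exactly why that lemma produces only $v \equiv 0 \pmod 6$. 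Since an ingredient $\ghd^*(2^m,\, 2\cdot 2^m+1+u_i)$ contributes $v \equiv 3 \pmod 6$ if and only if $u_i \equiv 4 \pmod 6$, to reach the complementary residue I would include exactly one such ingredient, namely the $\ghd^*(2^m,\, 3\cdot 2^m-3)$ furnished by Lemma~\ref{1-empty-cell} (valid since $m \ge 7 > 6$), which corresponds to $u_i = 2^m-4$.

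Concretely, I would reserve one parallel class to carry this $\ghd^*(2^m,\, 3\cdot 2^m-3)$ ingredient, and split the remaining $2^m$ classes exactly as in the even case: $x$ classes with $u_i=1$ (using $\ghd^*(2^m,\, 2\cdot 2^m+2)$ from Theorem~\ref{GHD*}(i)), $y$ classes with $u_i = 2^m-1$ (using $\ghd^*(2^m,\, 3\cdot 2^m)$ from Theorem~\ref{GHD*}(ii)), and the remaining $2^m-x-y$ classes with $u_i = 2^m-7$ (using $\ghd^*(2^m,\, 3\cdot 2^m-6)$ from Lemma~\ref{2-empty-cell}). Summing $u = \sum_i u_i$ and applying Lemma~\ref{Stinson 1}, the resulting $\ghd^*(2^{2m},\, 2\cdot 2^{2m}+1+u)$ has point count $P(x,y) = 3\cdot 2^{2m} - 3 - 6\cdot 2^m - x(2^m-8) + 6y$; a short calculation shows this equals $3\cdot 2^{2m} - 6\alpha + 3$ with $\alpha = 1 + 2^m + x\cdot\frac{2^m-8}{6} - y$, and that $\frac{2^m-8}{6}$ is an integer because $2^m \equiv 2 \pmod 6$. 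Taking $x=0$, $y=2^m$ gives the minimum $\alpha = 1$, matching the lower end of the claimed range.

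The main work, as in Lemma~\ref{Power-2-even}, is the coverage argument. For fixed $x$, letting $y$ run over $0 \le y \le 2^m - x$ produces all admissible point counts in an arithmetic progression of common difference $6$ between $P(x,0)$ and $P(x,2^m-x)$. I would then show that consecutive values of $x$ give overlapping (hence gap-free) progressions by solving $P(x,0) = P(x+1,\, 2^m-x-1)$, which yields the integer threshold $x_0 = \frac{5}{6} 2^m + \frac{1}{3} = \frac{5\cdot 2^m+2}{6}$; for $x \le x_0$ the progressions overlap, so the union covers every feasible point count from $3\cdot 2^{2m}-3$ (at $x=0$) down to $P(x_0+1,0)$. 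Evaluating $P(x_0+1,0)$ and comparing it with $3\cdot 2^{2m}+3-6A'$ then establishes that every $\alpha$ with $1 \le \alpha \le A'$ is realized (in fact the coverage extends slightly beyond, consistent with $A'$ being stated as a clean but non-integral bound, so one need only confirm the union reaches at least $\lfloor A'\rfloor$ rather than stopping a step short). The routine side conditions — that $x_0$ and $x_0+1$ are integers not exceeding $2^m$, and that each ingredient exists — all hold for $m \ge 7$. The hard part is purely this final bookkeeping, which is delicate precisely because the reserved $v\equiv 3\pmod 6$ class shifts the point count by $3$ relative to the even case and must be tracked carefully through the overlap computation.
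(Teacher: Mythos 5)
Your proposal is correct and matches the paper's proof: the paper obtains this lemma in exactly the same way, by repeating the argument of Lemma~\ref{Power-2-even} with a single parallel class assigned $u_i = 2^m-4$ (i.e., the $\ghd^*(2^m,3\cdot 2^m-3)$ of Lemma~\ref{1-empty-cell}) in place of $u_i = 2^m-7$. Your additional bookkeeping --- the shifted overlap threshold $x_0 = \frac{5}{6}\cdot 2^m + \frac{1}{3}$ and the observation that the coverage actually reaches $A'+\frac{1}{2}$, so the stated bound $A'$ is safely attained --- is accurate and simply fills in details the paper leaves implicit.
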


\begin{proof}
The proof is similar to that of Lemma~\ref{Power-2-even}, except that on one parallel class we take $u_i=2^m-4$ instead of $u_i=2^m-7$.  This requires a $\ghd^*(2^m,3 \cdot 2^m-3)$, which exists by Lemma~\ref{1-empty-cell}.
\end{proof}

In Lemma~\ref{Power-2-even}, the number of empty cells in each row of the $\mathrm{GHD}^*(2^{2m},3 \cdot 2^{2m}-6\alpha)$ is $2\alpha$, while in Lemma~\ref{Power-2-odd}, the number of 
empty cells in each row of the \mbox{$\ghd^*(2^{2m},3 \cdot 2^{2m}-6\alpha+3)$} is $2\alpha-1$.  Note that in Lemmas~\ref{Power-2-even} and~\ref{Power-2-odd}, for $\alpha=A$ and $\alpha=A'$, 
respectively, the number of points is less than $3 \cdot 2^{2m} - \frac{5}{6} \cdot 2^{2m}$, so that we have GHDs of side length $2^{2m}$ for any number of empty cells per row 
between 0 and $\frac{5}{18} \cdot 2^{2m}$, giving proportions of empty cells per row or column between 0 and $\frac{5}{18}$. 
Approximating any real number $\pi \in [0,5/18]$ by a dyadic rational, this means that we can now construct a $\ghd$ such that the proportion of empty cells in a row is arbitrarily close to $\pi$.  

\begin{theorem} \label{proportion}
Let $\pi \in [0,5/18]$.  For any $\varepsilon>0$, there exists an odd integer $m$ and an integer $v$ for which there exists a $\mathrm{GHD}^*(2^{2m},v)$, $\mathcal{D}$, such that the proportion $\pi_0$ 
of empty cells in each row or column of $\mathcal{D}$ satisfies $|\pi-\pi_0|<\epsilon$.
\end{theorem}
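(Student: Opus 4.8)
The plan is to reinterpret Lemmas~\ref{Power-2-even} and~\ref{Power-2-odd} as producing every integer number of empty cells per row up to roughly $\tfrac{5}{18}2^{2m}$, and then to let $m\to\infty$ so that the achievable proportions form an arbitrarily fine net of $[0,5/18]$. First I would record that for a $\ghd(2^{2m},v)$ the number of empty cells in each row is $2^{2m}-v/3$, so the proportion of empty cells per row is $1-v/(3\cdot 2^{2m})$. Feeding in the designs of Lemma~\ref{Power-2-even} gives a $\ghd^*(2^{2m},3\cdot 2^{2m}-6\alpha)$ with exactly $2\alpha$ empty cells per row for each $0\le\alpha\le A$, and those of Lemma~\ref{Power-2-odd} give a $\ghd(2^{2m},3\cdot 2^{2m}-6\alpha+3)$ with $2\alpha-1$ empty cells per row for each $1\le\alpha\le A'$.

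Next I would show these two families jointly realize a contiguous block of integers. The even family supplies the even values $0,2,\dots,2\lfloor A\rfloor$ and the odd family supplies the odd values $1,3,\dots,2\lfloor A'\rfloor-1$; since $A'<A$, every integer $j$ with $0\le j\le 2\lfloor A'\rfloor$ is therefore achievable as a number of empty cells per row. It then suffices to check that $2\lfloor A'\rfloor\ge\tfrac{5}{18}2^{2m}$. Using $2A'=\tfrac{5}{18}2^{2m}+\tfrac{2}{9}2^m-\tfrac{23}{9}$ together with $2\lfloor A'\rfloor\ge 2A'-2$, this reduces to $\tfrac{2}{9}2^m\ge\tfrac{41}{9}$, i.e.\ $2^m\ge 20.5$, which holds for all the admissible $m\ge 7$. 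Hence every integer $j$ with $0\le j\le\tfrac{5}{18}2^{2m}$ occurs, so every rational $j/2^{2m}$ in $[0,5/18]$ is an achievable proportion, and consecutive achievable proportions are spaced exactly $1/2^{2m}$ apart.

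Finally I would round. Given $\pi\in[0,5/18]$ and $\varepsilon>0$, pick an odd $m\ge 7$ with $1/2^{2m}<\varepsilon$ and set $j=\lfloor\pi\,2^{2m}\rfloor$, which lies in $[0,\tfrac{5}{18}2^{2m}]$ because $\pi\le 5/18$. The design $\mathcal{D}$ realizing $j$ empty cells per row has proportion $\pi_0=j/2^{2m}$ satisfying $|\pi-\pi_0|<1/2^{2m}<\varepsilon$, as required. The only nontrivial point---and so the main obstacle---is the combinatorial bookkeeping of the previous paragraph: verifying that the even and odd families interleave with no gap and that their combined reach, after absorbing the floor losses $2\lfloor A'\rfloor\ge 2A'-2$, still clears $\tfrac{5}{18}2^{2m}$. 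Everything else is a routine $\varepsilon$-net argument.
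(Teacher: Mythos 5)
Your proposal is correct and follows essentially the same route as the paper: both arguments combine Lemmas~\ref{Power-2-even} and~\ref{Power-2-odd} to realize every integer number of empty cells per row from $0$ up to $\frac{5}{18}\cdot 2^{2m}$, and then approximate $\pi$ by the dyadic rational $\lfloor \pi\, 2^{2m}\rfloor/2^{2m}$ for a sufficiently large odd $m$. Your explicit floor-absorbing estimate $2\lfloor A'\rfloor \ge 2A'-2 \ge \frac{5}{18}\cdot 2^{2m}$ is simply a more careful write-up of the paper's observation that for $\alpha=A$ and $\alpha=A'$ the number of points falls below $3\cdot 2^{2m}-\frac{5}{6}\cdot 2^{2m}$.
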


\begin{proof}
Given $\pi$ and $\varepsilon$, there exists $m_0$ such that for all $m_1>m_0$, $|\pi-\lfloor 2^{m_1} \pi \rfloor / 2^{m_1}|<\varepsilon$. 
Let $m$ be an odd integer with $2m>m_0$, and let $\pi_0=\lfloor 2^{2m} \pi \rfloor / 2^{2m}$.  Note that if $\pi=0$, then $\pi_0=0$.  Otherwise, since $\pi-\frac{1}{2^{2m}} < \pi_0 \leq \pi$, we 
may also choose $m$ sufficiently large to ensure that $\pi_0 \in [0,\frac{5}{18}]$.  Thus Lemmas~\ref{Power-2-odd} and~\ref{Power-2-even} enable us to construct a $\ghd^*$ with side 
length $2^{2m}$ whose proportion of empty cells in a row is $\pi_0$. 
\end{proof}

Theorem~\ref{proportion} shows that we can find a $\ghd$ with an arbitrary proportion of empty cells across five-sixths of the interval $[0,1/3)$ of possible proportions.  This improves on previous 
work which has shown existence only very close to the ends of the spectrum.  The impact of this result is discussed further in Section~\ref{concl}.  

We remark also that GHDs exist throughout the spectrum with more general side lengths than powers of~$2$.  
First, the methods of Lemmas~\ref{1-empty-cell}--\ref{Power-2-odd} work for primes other than~$2$, provided we can find appropriate ingredient $\ghd$s.  Also, by a straightforward 
generalization of  the Moore--MacNeish product construction for MOLS~\cite{MacNeish22, Moore}, the existence of a $\ghd(s,v)$ and $3~\mols(n)$ implies that there exists 
a $\ghd(ns,nv)$. See, for instance, Theorem 2.6 in \cite{YanYin} for a similar result.  By applying this construction to the results of Lemmas~\ref{Power-2-even} and~\ref{Power-2-odd}, we 
can find GHDs with an arbitrary proportion of empty  cells (for proportions in $[0,5/18]$) for side lengths of the form $2^{2m}n$.

\section{Conclusion} \label{concl}

The concept of generalized Howell designs brings together various classes of designs, from doubly resolvable BIBDs on one side of the spectrum to MOLS and SOMAs on the other.  In this paper, 
we have defined generalized Howell designs in a way that encompasses several previously studied generalizations of Howell designs, and have attempted to unify disparate terminology for techniques 
used on both ends of the spectrum.  

In Section~\ref{ConstrSection}, we described several construction techniques for $\ghd$s, several of which generalize known constructions for Howell designs, doubly resolvable designs and MOLS.  
These construction techniques were used in Section~\ref{TwoEmptySection} to settle existence of $\ghd(s,v)$ in the case that the number of empty cells in each row and 
column is one or two,  with one  possible exception (a $\ghd(7,18)$) (Theorems~\ref{Existence_t=1} and \ref{Existence_t=2}).
The existence of $\ghd(s,v)$ with $e$ empty cells, where $e \in \{3, 4, \ldots, (s-3)/3 \} \setminus \{(s-6)/3\}$
remains open in general (although in \cite{DuAbelWang}, the case  $e=(s-4)/3$  was solved for $e$ even with  15 possible exceptions).
$e=0$ (a $\ghd(6,18)$) $e=1$ (a $\ghd(9,24)$) and $e=2$ (a $\ghd(12,30)$) are now known  to exist. (see Theorems~\ref{618}, \ref{Existence_t=1} and \ref{Existence_t=2}).)    
A simpler interim result would be to show existence for an interval of $e$-values 
of a given fixed length.  We conjecture that there exists a $\ghd(s,v)$ whenever the obvious necessary conditions are satisfied, with at most a small number of exceptions for each $e$. 

The main result of Section~\ref{Spectrum} is that for any $\pi \in [0,5/18]$, there exists a $\ghd$ whose proportion of cells in a given row or column which are empty is arbitrarily close 
to $\pi$.  This is a powerful result.  While it does not close the existence spectrum, it does provide strong evidence that this should be possible.  Previous work has focused on the two 
ends of the spectrum: Kirkman squares and DRNKTSs at one end, and MOLS, SOMAs, and GHDs with one empty cell per row/column at the other; Theorem~\ref{proportion} shows existence of 
GHDs across five-sixths of the spectrum.
The techniques of Section~\ref{Spectrum} can be used to give some examples of $\ghd$s with proportion greater than $5/18$, but necessarily bounded away from $1/3$.  It remains a challenging 
open problem to show that there exist $\ghd$s whose proportion of empty cells per row or column can be arbitrarily close to any element of $[0,1/3)$.

\section{Acknowledgments}
The authors would like to thank Esther Lamken for a number of useful comments
and in particular for suggesting  the intransitive starter-adder method,
which was used for several of the smaller GHDs in this paper.

\begin{appendices}
\section{Starters and adders for small \mbox{\boldmath $\ghd(n+1,3n)$}}
\label{1EmptyAppendix}

First we give those obtained by transitive starters and adders:

\begin{example} For $n=14$:
\[
\begin{array}{@{}*{6}l@{}}
0_0  3_0  5_0             [10] & 1_1  3_1  7_1            [ 5] & \infty_0  4_0  4_1        [12] &  \infty_1  8_0  9_1       [ 1] & \infty_2   9_0  11_1     [ 2] & \infty_3 11_0 14_1       [ 8] \\   
\infty_4 13_0  2_1        [ 9] & \infty_5 10_0  0_1       [ 4] & \infty_6 14_0  5_1        [13] & \infty_7   6_0 13_1       [11] & \infty_8   1_0  10_1     [ 7] & \infty_9  2_0 12_1       [ 3] \\
\infty_{10} 12_0  8_1     [ 6] & \infty_{11}  7_0  6_1    [14] 
\end{array}
\]

\end{example}

\begin{example} For $n=20$:
\[
\begin{array}{@{}*{6}l@{}}
0_0  1_0  3_0             [14] &  2_1  6_1  8_1           [ 7] & \infty_0  8_0  9_1        [ 2] &  \infty_1  9_0 11_1       [12] & \infty_2   7_0  10_1     [11] & \infty_3 11_0 15_1       [ 5] \\   
\infty_4 13_0 18_1        [17] & \infty_5 18_0  3_1       [ 4] & \infty_6 10_0 17_1        [10] & \infty_7  14_0  1_1       [18] & \infty_8  12_0   0_1     [16] & \infty_9 15_0  4_1       [ 8] \\
\infty_{10}  5_0 16_1     [ 1] & \infty_{11} 16_0  7_1    [ 3] & \infty_{12} 20_0 12_1     [13] & \infty_{13}  4_0 19_1     [20] & \infty_{14} 19_0 14_1    [15] & \infty_{15} 17_0 13_1    [ 9] \\
\infty_{16}  2_0 20_1     [ 6] & \infty_{17}  6_0  5_1    [19]  
\end{array}
\]

\end{example}

\begin{example} For $n=26$:
\[
\begin{array}{@{}*{6}l@{}}
0_0  6_0 10_0             [18] &  3_1 11_1 16_1           [ 9] & \infty_0  8_0  8_1        [19] &  \infty_1 14_0 15_1       [17] & \infty_2  22_0  24_1     [25] & \infty_3 11_0 14_1       [ 2] \\   
\infty_4 13_0 17_1        [16] & \infty_5 17_0 22_1       [24] & \infty_6  7_0 13_1        [15] & \infty_7   2_0  9_1       [ 4] & \infty_8  24_0   5_1     [12] & \infty_9  1_0 10_1       [11] \\
\infty_{10} 15_0 25_1     [20] & \infty_{11} 12_0 23_1    [ 7] & \infty_{12} 19_0  4_1     [ 6] & \infty_{13} 16_0  2_1     [21] & \infty_{14} 20_0  7_1    [ 1] & \infty_{15} 18_0  6_1    [ 5] \\
\infty_{16} 23_0 12_1     [ 3] & \infty_{17}  3_0 20_1    [14] & \infty_{18}  9_0  1_1     [23] & \infty_{19} 25_0 18_1     [13] & \infty_{20}  5_0 26_1    [10] & \infty_{21} 26_0 21_1    [ 8] \\
\infty_{22}  4_0  0_1     [26] & \infty_{23} 21_0 19_1    [22] 
\end{array}
\]

\end{example}

\begin{example} For $n=32$: 
\[
\begin{array}{@{}*{6}l@{}}
0_0  4_0  7_0             [22] &  5_1  6_1 11_1           [11] & \infty_0 25_0 25_1        [ 8] &  \infty_1  8_0  9_1       [17] & \infty_2  16_0  18_1     [25] & \infty_3 11_0 14_1       [ 1] \\   
\infty_4 13_0 17_1        [14] & \infty_5 17_0 22_1       [29] & \infty_6  6_0 12_1        [15] & \infty_7   1_0  8_1       [ 3] & \infty_8   2_0  10_1     [32] & \infty_9 10_0 19_1       [13] \\
\infty_{10}  3_0 13_1     [28] & \infty_{11} 18_0 29_1    [18] & \infty_{12} 19_0 31_1     [21] & \infty_{13} 21_0  1_1     [23] & \infty_{14} 22_0  4_1    [26] & \infty_{15} 20_0  3_1    [ 4] \\
\infty_{16} 23_0  7_1     [16] & \infty_{17} 12_0 30_1    [31] & \infty_{18} 14_0  0_1     [ 5] & \infty_{19} 15_0  2_1     [ 2] & \infty_{20}  5_0 26_1    [27] & \infty_{21} 27_0 16_1    [20] \\
\infty_{22}  9_0 32_1     [ 7] & \infty_{23} 24_0 15_1    [ 6] & \infty_{24} 29_0 21_1     [24] & \infty_{25} 30_0 23_1     [12] & \infty_{26} 26_0 20_1    [ 9] & \infty_{27} 32_0 27_1    [19] \\
\infty_{28} 28_0 24_1     [10] & \infty_{29} 31_0 28_1    [30] 
\end{array}
\]
\end{example}

\begin{example} For $n=38$:
\[
\begin{array}{@{}*{6}l@{}}
0_0  6_0  7_0             [26] &  8_1 14_1 24_1           [13] & \infty_0 20_0 20_1        [20] &  \infty_1 24_0 25_1       [ 1] & \infty_2  17_0  19_1     [ 4] & \infty_3 26_0 29_1       [ 2] \\   
\infty_4 19_0 23_1        [23] & \infty_5 23_0 28_1       [30] & \infty_6 25_0 31_1        [19] & \infty_7   2_0  9_1       [29] & \infty_8   3_0  12_1     [27] & \infty_9  5_0 15_1       [ 7] \\
\infty_{10} 10_0 21_1     [31] & \infty_{11} 18_0 30_1    [25] & \infty_{12} 29_0  3_1     [ 9] & \infty_{13} 35_0 10_1     [24] & \infty_{14} 28_0  4_1    [28] & \infty_{15} 30_0  7_1    [17] \\
\infty_{16} 33_0 11_1     [22] & \infty_{17} 14_0 32_1    [21] & \infty_{18} 37_0 17_1     [12] & \infty_{19} 32_0 13_1     [36] & \infty_{20} 34_0 16_1    [32] & \infty_{21} 12_0 34_1    [33] \\
\infty_{22} 22_0  6_1     [14] & \infty_{23} 16_0  1_1    [34] & \infty_{24} 11_0 36_1     [11] & \infty_{25} 13_0  0_1     [ 5] & \infty_{26} 38_0 26_1    [38] & \infty_{27}  9_0 37_1    [ 6] \\
\infty_{28} 15_0  5_1     [37] & \infty_{29}  8_0 38_1    [16] & \infty_{30}  1_0 33_1     [ 8] & \infty_{31} 27_0 22_1     [35] & \infty_{32} 31_0 27_1    [ 3] & \infty_{33} 21_0 18_1    [18] \\
\infty_{34}  4_0  2_1     [15] & \infty_{35} 36_0 35_1    [10] 
\end{array}
\]

\end{example}

\begin{example} For $n=41$:
\[
\begin{array}{@{}*{6}l@{}}
0_0 24_0 32_0             [28] & 10_1 11_1 13_1           [14] & \infty_0 26_0 26_1        [ 8] &  \infty_1  8_0  9_1       [ 1] & \infty_2  16_0  18_1     [ 5] & \infty_3 29_0 32_1       [10] \\   
\infty_4 18_0 22_1        [15] & \infty_5 22_0 27_1       [36] & \infty_6 17_0 23_1        [33] & \infty_7  35_0  0_1       [11] & \infty_8   4_0  12_1     [31] & \infty_9  5_0 14_1       [19] \\
\infty_{10}  6_0 16_1     [32] & \infty_{11}  9_0 20_1    [39] & \infty_{12}  3_0 15_1     [17] & \infty_{13} 11_0 24_1     [20] & \infty_{14} 33_0  5_1    [35] & \infty_{15}  2_0 17_1    [41] \\
\infty_{16} 21_0 37_1     [34] & \infty_{17} 31_0  6_1    [40] & \infty_{18} 15_0 33_1     [29] & \infty_{19} 30_0  7_1     [24] & \infty_{20} 25_0  3_1    [16] & \infty_{21} 19_0 40_1    [30] \\
\infty_{22} 14_0 36_1     [13] & \infty_{23} 23_0  4_1    [22] & \infty_{24} 20_0  2_1     [37] & \infty_{25} 36_0 19_1     [38] & \infty_{26} 1_0 29_1     [18] & \infty_{27} 10_0 39_1    [12] \\
\infty_{28} 12_0  1_1     [ 2] & \infty_{29} 38_0 28_1    [27] & \infty_{30} 40_0 31_1     [ 7] & \infty_{31}  7_0 41_1     [23] & \infty_{32} 41_0 34_1    [26] & \infty_{33} 27_0 21_1    [ 9] \\
\infty_{34} 13_0  8_1     [ 4] & \infty_{35} 34_0 30_1    [ 6] & \infty_{36} 28_0 25_1     [25] & \infty_{37} 37_0 35_1     [ 0] & \infty_{38} 39_0 38_1    [ 3] 
\end{array}
\]

\end{example}

\begin{example} For $n=44$:
\[
\begin{array}{@{}*{6}l@{}}
0_0  2_0 10_0             [30] & 16_1 30_1 36_1           [15] & \infty_0 11_0 11_1        [28] &  \infty_1 25_0 26_1       [31] & \infty_2  26_0  28_1     [ 2] & \infty_3 21_0 24_1        [5] \\   
\infty_4 16_0 20_1        [ 7] & \infty_5 27_0 32_1       [17] & \infty_6 29_0 35_1        [32] & \infty_7   1_0  8_1       [35] & \infty_8   4_0  12_1     [23] & \infty_9  8_0 17_1       [43] \\
\infty_{10} 19_0 29_1     [33] & \infty_{11} 33_0 44_1    [22] & \infty_{12} 30_0 42_1     [16] & \infty_{13} 36_0  4_1     [12] & \infty_{14} 38_0  7_1    [29] & \infty_{15} 39_0  9_1    [10] \\
\infty_{16} 34_0  5_1     [ 4] & \infty_{17} 37_0 10_1    [27] & \infty_{18} 32_0  6_1     [42] & \infty_{19} 20_0 40_1     [13] & \infty_{20} 42_0 18_1    [24] & \infty_{21} 15_0 37_1    [26] \\
\infty_{22}  9_0 33_1     [38] & \infty_{23} 14_0 39_1    [39] & \infty_{24} 40_0 21_1     [19] & \infty_{25}  7_0 34_1     [36] & \infty_{26} 3_0 31_1     [21] & \infty_{27} 18_0  2_1    [44] \\
\infty_{28} 41_0 27_1     [41] & \infty_{29}  6_0 38_1    [ 3] & \infty_{30} 13_0  1_1     [37] & \infty_{31} 24_0 13_1     [34] & \infty_{32} 35_0 25_1    [25] & \infty_{33} 12_0  3_1    [ 8] \\
\infty_{34} 31_0 23_1     [11] & \infty_{35} 22_0 15_1    [ 9] & \infty_{36} 28_0 22_1     [ 6] & \infty_{37}  5_0  0_1     [20] & \infty_{38} 23_0 19_1    [40] & \infty_{39} 17_0 14_1    [18] \\
\infty_{40} 43_0 41_1     [14] & \infty_{41} 44_0 43_1    [ 1]
\end{array}
\]

\end{example}

  Now the intransitive starters and adders:

\begin{example} For $n=10$:
\[
\begin{array}{@{}*{6}l@{}}
0_2  6_2  7_2             [ 0] &  7_0  8_0  3_2           [ 8] &   5_1     6_1  1_2        [ 2] &   9_0      6_0  8_2       [ 4] &  3_1 0_1   2_2   [ 6] & 3_0 1_0 4_2       [ 1] \\   
4_1  2_1  5_2             [ 9] &  4_0  0_0  8_1           [ 7] &   5_0     1_1  7_1        [ 3] &   2_0      9_1  9_2       [ R] &  9_0 2_1   9_2   [ C]   
\end{array}
\]

\end{example}

\begin{example} For $n=12$:
\[
\begin{array}{@{}*{6}l@{}}
3_2  6_2 10_2             [ 0] &  6_0 10_0  9_0           [ 4] &  10_1     2_1  1_1        [ 8] &   5_0      0_1  9_2       [ 2] &  2_0 7_1  11_2   [10] &  7_0 3_1 1_2       [ 1] \\   
4_0  8_1  2_2             [11] & 11_0  1_0  0_2           [ 5] &   4_1     6_1  5_2        [ 7] &   8_0      9_1  4_2       [ 3] &  0_0 11_1  7_2   [ 9] &  3_0 5_1 8_2       [ R] \\   
5_0  3_1  8_2             [ C]  
\end{array}
\]

\end{example}

\begin{example} For $n=16$:
\[
\begin{array}{@{}*{6}l@{}}
6_2 12_2 15_2             [ 0] &  0_0  2_0  7_0           [10] &  10_1    12_1  1_1        [ 6] &   4_0     10_0  5_2       [ 4] &  8_1 14_1  9_2   [12] & 8_0 9_1 1_2       [13] \\   
6_0  5_1 14_2             [ 3] &  3_0  6_1  8_2           [15] &   5_0     2_1  7_2        [ 1] &  12_0      3_1  0_2       [11] & 14_0 7_1  11_2   [ 5] & 13_0  1_0  4_2    [14] \\   
11_1 15_1  2_2            [ 2] & 11_0  0_1 10_2           [ 9] &   9_0     4_1  3_2        [ 7] &  15_0     13_1 13_2       [ R] & 13_0 15_1 13_2   [ C]   
\end{array}
\]

\end{example}

\begin{example} For $n=18$:
\[
\begin{array}{@{}*{6}l@{}}
1_2  2_2 12_2             [ 0] &  0_0  8_0  7_0           [12] &  12_1     2_1  1_1        [ 6] &   4_0     10_0  3_2       [ 4] & 8_1 14_1   7_2   [14] & 12_0 15_1 13_2     [ 1] \\   
16_0 13_1 14_2            [17] &  1_0  0_1  5_2           [ 5] &   5_0     6_1 10_2        [13] &  14_0      3_1  6_2       [ 3] & 6_0 17_1   9_2   [15] &  3_0 5_1 0_2       [ 8] \\   
13_0 11_1 8_2             [10] &  9_0 11_0 17_2           [16] &   7_1     9_1 15_2        [ 2] &  17_0      4_1 11_2       [11] & 15_0 10_1  4_2   [ 7] &  2_0 16_1 16_2     [ R] \\   
16_0  2_1  16_2           [ C]  
\end{array}
\]

\end{example}

\begin{example} For $n=22$:
\[
\begin{array}{@{}*{6}l@{}}
0_2  7_2 13_2             [ 0] &  0_0  4_0  7_0           [16] &  16_1    20_1  1_1        [ 6] &   8_0     14_0 11_2       [10] & 18_1  2_1 21_2   [12] & 10_0 11_1   4_2   [ 5] \\   
16_0 15_1  9_2            [17] & 19_0  0_1  6_2           [ 9] &   9_0     6_1 15_2        [13] &   2_0      9_1 14_2       [ 3] & 12_0 5_1  17_2   [19] &  1_0 19_1 18_2    [ 2] \\   
21_0  3_1 20_2            [20] & 11_0 21_1 12_2           [18] &  17_0     7_1  8_2        [ 4] &  20_0      3_0  5_2       [14] & 12_1 17_1  19_2  [ 8] & 15_0 13_1  1_2    [15] \\   
 6_0  8_1 16_2            [ 7] &  5_0 14_1  3_2           [21] &  13_0     4_1  2_2        [ 1] &  18_0     10_1 10_2       [ R] & 10_0 18_1 10_2   [ C]   
\end{array}
\]

\end{example}

\begin{example} For $n=28$:
\[
\begin{array}{@{}*{6}l@{}}
4_2 12_2 17_2             [ 0] &  0_0  8_0  5_0           [16] &  16_1    24_1 21_1        [12] &  20_0     26_1 23_2       [ 8] &  6_0  0_1  3_2   [20] & 22_0  5_1  16_2   [19] \\   
24_0 13_1  7_2            [ 9] &  9_0 12_1 27_2           [23] &   7_0     4_1 22_2        [ 5] &  10_0     17_1 18_2       [ 1] & 18_0 11_1 19_2   [27] & 13_0 17_0  2_2    [18] \\   
 3_1  7_1 20_2            [10] & 15_0 25_1  1_2           [ 4] &   1_0    19_1  5_2        [24] &  25_0      2_1 21_2       [21] & 23_0 18_1  14_2  [ 7] &  3_0 15_1 24_2    [17] \\   
 4_0 20_1 13_2            [11] & 21_0  2_0  9_2           [ 6] &  27_1     8_1 15_2        [22] &  11_0     12_0 10_2       [26] &  9_1 10_1   8_2  [ 2] & 27_0  1_1 11_2    [15] \\   
16_0 14_1 26_2            [13] & 26_0 22_1  0_2           [25] &  19_0    23_1 25_2        [ 3] &  14_0      6_1  6_2     [ R] &  6_0 14_1  6_2   [ C]   
\end{array}
\]

\end{example}

\begin{example} For $n=30$:
\[
\begin{array}{@{}*{6}l@{}}
3_2  7_2 10_2             [ 0] &  0_0  8_0  5_0           [18] &  18_1    26_1 23_1        [12] &  18_0     24_1 22_2       [28] & 22_0 16_1  20_2   [ 2] & 28_0  9_1 15_2     [27] \\   
 6_0 25_1 12_2             [ 3] & 27_0  0_1  2_2           [23] &  23_0    20_1 25_2        [ 7] &  10_0     17_1 11_2       [17] &  4_0 27_1  28_2  [13] &  9_0 13_0 21_2     [ 6] \\   
15_1 19_1 27_2             [24] & 19_0 29_1 26_2           [22] &  21_0    11_1 18_2        [ 8] &   7_0     12_1 23_2       [21] &  3_0 28_1  14_2  [ 9] &  1_0 13_1 16_2     [ 1] \\   
14_0  2_1 17_2             [29] & 15_0 24_0  4_2           [20] &   5_1    14_1 24_2        [10] &  25_0     26_0  9_2       [26] & 21_1 22_1   5_2  [ 4] & 29_0  1_1 19_2     [11] \\   
12_0 10_1 0_2             [19] &  2_0  6_1  1_2           [ 5] &  11_0     7_1  6_2        [25] &  20_0      3_1 29_2       [14] & 17_0  4_1 13_2   [16] & 16_0  8_1  8_2     [ R] \\   
 8_0 16_1   8_2         [ C]  
\end{array}
\]

\end{example}

\begin{example} For $n=36$:
\[
\begin{array}{@{}*{6}l@{}}
6_2 18_2 35_2             [ 0] &  0_0  4_0  7_0            [26] &  26_1    30_1 33_1        [10] &  34_0      4_1  3_2       [14] & 18_0 12_1  17_2  [22] & 32_0  7_1 33_2     [35] \\   
 6_0 31_1 32_2             [ 1] & 35_0  2_1 15_2           [25] &  27_0    24_1  4_2        [11] &  28_0     35_1 19_2       [31] & 30_0 23_1  14_2  [ 5] & 21_0 25_1 28_2     [ 8] \\   
33_0 29_1  0_2             [28] & 25_0  1_1 31_2           [30] &  31_0    19_1 25_2        [ 6] &  15_0     20_1  2_2       [27] & 11_0  6_1  29_2  [ 9] &  3_0 13_1  1_2     [ 7] \\   
20_0 10_1  8_2             [29] & 19_0 28_1 21_2           [20] &  12_0     3_1  5_2        [16] &  17_0     18_1 13_2       [34] & 16_0 15_1  11_2  [ 2] & 13_0 27_1 24_2     [19] \\   
10_0 32_1  7_2             [17] &  9_0 22_0 34_2           [12] &  21_1    34_1 10_2        [24] &  29_0      2_0 12_2       [15] &  8_1 17_1  27_2  [21] & 26_0  9_1 30_2     [32] \\   
 5_0 22_1 26_2             [ 4] &  1_0 23_0  9_2           [13] &  14_1     0_1 22_2        [23] &   8_0     14_0 23_2       [33] &  5_1 11_1  20_2  [ 3] & 24_0 16_1 16_2     [ R] \\   
16_0 24_1 16_2         [ C]  
\end{array}
\]

\end{example}

\begin{example} For $n=46$:
\[
\begin{array}{@{}*{6}l@{}}
4_2  17_2 35_2            [ 0] &  0_0  4_0  7_0           [36] &  36_1    40_1 43_1        [10] &  44_0      4_1 45_2       [22] & 26_0 20_1 21_2   [24] & 40_0  5_1  44_2   [25] \\   
30_0 19_1 23_2            [21] & 45_0  2_1  1_2           [45] &   1_0    44_1  0_2        [ 1] &  42_0      3_1 11_2       [ 3] &  6_0 45_1 14_2   [43] & 37_0 41_1 27_2    [ 2] \\   
43_0 39_1 29_2            [44] & 21_0 33_1 12_2           [16] &   3_0    37_1 28_2        [30] &  17_0     22_1  5_2       [11] & 33_0 28_1  16_2  [35] & 13_0 23_1 20_2    [39] \\   
16_0  6_1 13_2            [ 7] & 25_0 34_1  7_2           [32] &  20_0    11_1 39_2        [14] &  31_0     32_1 37_2       [28] & 14_0 13_1  19_2  [18] & 41_0  9_1 30_2    [13] \\   
22_0  8_1 43_2            [33] & 15_0 32_0 42_2           [ 6] &  21_1    38_1  2_2        [40] &  19_0     24_0 36_2       [ 5] & 24_1 29_1 41_2   [41] & 38_0  7_1 18_2    [ 4] \\   
11_0 42_1 22_2            [42] & 27_0  9_0 40_2           [37] &  18_1     0_1 31_2        [ 9] &   8_0     18_0 38_2       [17] & 25_1 35_1   9_2  [29] & 36_0 12_1  6_2    [27] \\   
39_0 17_1 33_2            [19] & 10_0 31_1 34_2           [20] &   5_0    30_1  8_2        [26] &  28_0      1_1 15_2       [34] & 35_0 16_1   3_2  [12] &  2_0 15_1 24_2    [ 8] \\   
23_0 10_1 32_2            [38] & 12_0 14_1 10_2           [15] &  29_0    27_1 25_2        [31] &  34_0     26_1 26_2       [ R] & 26_0 34_1  26_2  [ C]   
\end{array}
\]

\end{example}

\section{Starters and adders for small \mbox{\boldmath $\ghd(n+2,3n)$}}
\label{starters and adders}

%\subsection{\boldmath $n=8$}

First we give those obtained by transitive starters and adders:

\begin{example} For $n=8$: 
\[
\begin{array}{@{}*{8}l@{}}
0_{1} 6_{1} 7_{1}  [2] & 4_{1} 2_{1} 7_{0}  [3] & 6_{0} 8_{0} 8_{1}  [8] & 0_{0} 1_{0} 4_{0}  [7] & \infty_0 2_{0} 3_{1}  [1] & \infty_1 5_{0} 9_{1}  [4] & \infty_2 3_{0} 1_{1}  [9] & \infty_3 9_{0} 5_{1}  [6] 
\end{array}
\] \end{example}

%\subsection{\boldmath $n=9$}

\begin{example} For $n=9$: 
\[
\begin{array}{@{}*{7}l@{}}
4_{0} 5_{1} 3_{1}  [8] & 10_{0} 2_{0} 3_{0}  [4] & 6_{0} 8_{0} 6_{1}  [3] & 9_{1} 10_{1} 2_{1}  [6] & \infty_0 0_{0} 4_{1}  [2] & \infty_1 7_{0} 1_{1}  [9] & \infty_2 9_{0} 0_{1}  [1] \\
\infty_3 5_{0} 8_{1}  [10] & \infty_4 1_{0} 7_{1}  [7] & 
\end{array}
\] \end{example}

%\subsection{\boldmath $n=10$}

\begin{example} For $n=10$: 
\[
\begin{array}{@{}*{7}l@{}}
3_{1} 10_{1} 11_{1} [3]   & 3_{0} 4_{0} 6_{0} [10]   & 1_{0} 8_{0} 4_{1} [7]   & 7_{0} 5_{1} 8_{1} [4]   & \infty_{0} 0_{0} 0_{1} [5]   & \infty_{1} 11_{0} 6_{1} [1]   & \infty_{2} 10_{0} 9_{1} [11] \\
\infty_{3} 5_{0} 2_{1} [2]  & \infty_{4} 2_{0} 7_{1}  [8]   & \infty_{5} 9_{0} 1_{1} [9]
\end{array}
\] \end{example}

%\subsection{\boldmath$n=11$}

\begin{example} For $n=11$: 
\[
\begin{array}{lllllll}
3_{1} 11_{0} 0_{0} [7] & 9_{0} 2_{0} 5_{0} [4] & 10_{1} 11_{1} 2_{1} [6] & 4_{1} 10_{0} 6_{1}[1] & \infty_0 3_{0} 0_{1}[0] & \infty_1 12_{0} 5_{1}  [9] & \infty_2 8_{0} 9_{1} [2] \\
\infty_3 6_{0} 1_{1} [8] & \infty_4 4_{0} 8_{1}  [11] &  \infty_5 1_{0} 12_{1} [3] & \infty_6 7_{0} 7_{1} [5]
\end{array}
\] \end{example}

%\subsection{\boldmath $n=12$}

\begin{example} For $n=12$: 
\[
\begin{array}{@{}*{7}l@{}}
3_{1} 7_{0} 2_{0}  [2] & 8_{1} 0_{1} 10_{1}  [6] & 1_{1} 6_{1} 6_{0}  [11] & 1_{0} 3_{0} 4_{0}  [9] & \infty_{0} 11_{0} 13_{1}  [5] & \infty_{1} 0_{0} 5_{1}  [8] \\
 \infty_{2} 5_{0} 9_{1}  [1] & \infty_{3} 8_{0} 2_{1}  [13] & \infty_{4} 10_{0} 7_{1}  [4] & \infty_{5} 12_{0} 4_{1}  [3] & \infty_{6} 13_{0} 11_{1}  [12] & \infty_{7} 9_{0} 12_{1}  [10] & 
\end{array}
\] \end{example}

%\subsection{\boldmath $n=13$}

\begin{example} For $n=13$: 
\[
\begin{array}{lllllll}
0_{0} 4_{1} 8_{1}  [4] & 6_{0} 7_{0} 9_{0}  [8] & 11_{0} 1_{0} 2_{1} [7] & 5_{1} 6_{1}, 11_{1} [9] & \infty_0,13_{0} 7_{1} [0] & \infty_1 8_{0} 3_{1}  [14] \\
 \infty_2 4_{0} 1_{1}  [5] &  \infty_3 14_{0}, 13_{1}  [12] & \infty_4 12_{0}  0_{1}  [13] & \infty_5 10_{0} 12_{1} [6] & \infty_6 5_{0} 10_{1}  [1] & \infty_7 3_{0} 14_{1}  [2] \\
 \infty_8 2_{0} 9_{1}  [10]
\end{array}
\] \end{example}

%\subsection{\boldmath $n=14$}

\begin{example} For $n=14$: 
\[
\begin{array}{@{}*{7}l@{}}
6_{0} 9_{1} 8_{1}  [5] & 9_{0} 13_{0} 0_{0}  [7] & 10_{0} 5_{1} 11_{0}  [3] & 0_{1} 4_{1} 6_{1}  [1] & \infty_0 3_{0} 12_{1}  [6] & \infty_1 12_{0} 10_{1}  [9] \\
 \infty_2 8_{0} 14_{1}  [14] & \infty_3 15_{0} 3_{1}  [13] & \infty_4 4_{0} 11_{1}  [4] & \infty_5 7_{0} 15_{1}  [11] & \infty_6 1_{0} 2_{1}  [2] & \infty_7 2_{0} 7_{1}  [15] \\
 \infty_8 5_{0} 1_{1}  [10] & \infty_9 14_{0} 13_{1}  [12]
\end{array}
\] \end{example}

%\subsection{\boldmath $n=15$}

\begin{example} For $n=15$: 
\[
\begin{array}{@{}*{7}l@{}}
10_{1} 8_{1} 16_{1}  [7] & 16_{0} 4_{0} 13_{1}  [11] & 7_{0} 6_{1} 5_{1}  [4] & 2_{0} 13_{0} 0_{0}  [16] & \infty_0 14_{0} 4_{1}  [0] &  \infty_1 15_{0}  1_{1}  [15] \\
 \infty_2 10_{0}, 11_{1}  [9] & \infty_3 6_{0} 0_{1}  [14] & \infty_4 12_{0} 7_{1}  [12] &  \infty_5 9_{0} 14_{1}  [8] & \infty_6 1_{0} 3_{1}  [5] & \infty_7 3_{0} 9_{1}  [2] \\
 \infty_8 8_{0} 12_{1} [1] & \infty_{9} 5_{0} 15_{1}  [3]  & \infty_{10} 11_{0} 2_{1}  [10] 
\end{array}
\] \end{example}

%\subsection{\boldmath $n=16$} 

\begin{example} For $n=16$: 
\[
\begin{array}{lllllll}
0_{0} 16_{0} 4_{0}  [10] & 17_{1} 2_{1} 6_{0}  [14] & 12_{1} 2_{0} 17_{0}  [5] & 13_{1} 0_{1} 1_{1}  [7] & \infty_{0} 14_{0} 14_{1}  [1] & \infty_{1} 15_{0} 5_{1}  [4] \\
 \infty_{2} 13_{0} 4_{1}  [17] &  \infty_{3} 11_{0} 15_{1}  [13] & \infty_{4} 9_{0} 3_{1}  [8] & \infty_{5} 10_{0} 16_{1} [6] & \infty_{6} 3_{0} 10_{1}  [2] & \infty_{7} 12_{0} 9_{1}  [15] \\
 \infty_{8} 7_{0} 8_{1}  [11] & \infty_{9} 8_{0} 11_{1}  [3] & \infty_{10} 1_{0} 6_{1}  [12] & \infty_{11} 5_{0} 7_{1}  [16] 
\end{array}
\] \end{example}

%\subsection{\boldmath $n=17$}

\begin{example} For $n=17$: 
\[
\begin{array}{lllllll}
11_{1} 3_{1} 7_{0} [2] & 10_{0} 5_{1} 14_{0} [5] & 0_{1} 10_{1} 15_{1} [16] & 11_{0} 1_{0} 3_{0} [15] & \infty_0 2_{0} 8_{1} [0] & \infty_1 17_{0} 9_{1} [8] \\
 \infty_2 9_{0} 12_{1} [11] & \infty_3 4_{0} 4_{1} [10] & \infty_4 0_{0} 18_{1} [12] & \infty_5 18_{0} 1_{1} [18] & \infty_6 12_{0} 2_{1} [1] & \infty_7 15_{0} 16_{1} [9] \\
 \infty_8 16_{0} 13_{1} [7] & \infty_{9} 6_{0} 14_{1} [4] & \infty_{10} 5_{0} 17_{1} [17] & \infty_{11} 13_{0} 7_{1} [14] & \infty_{12} 8_{0} 6_{1} [3] 
\end{array}
\] \end{example}

%\subsection{\boldmath $n=18$}

\begin{example} For $n=18$: 
\[
\begin{array}{lllllll}
17_{0} 18_{1} 17_{1}  [7] & 1_{0} 15_{0} 10_{0}  [17] & 19_{0} 16_{0} 1_{1}  [1] & 0_{1} 2_{1} 16_{1}  [15] & \infty_0 9_{0} 5_{1}  [4] & \infty_1 8_{0} 6_{1}  [13] \\
 \infty_2 18_{0} 11_{1}  [12] & \infty_3 6_{0} 3_{1}  [5] & \infty_4 5_{0} 4_{1}  [3] & \infty_5 14_{0} 9_{1}  [11] & \infty_6 4_{0} 13_{1}  [19] & \infty_7 2_{0} 12_{1}  [14] \\
 \infty_8 13_{0} 7_{1}  [9] & \infty_9 3_{0} 14_{1}  [16] & \infty_{10} 7_{0} 10_{1}  [8] & \infty_{11} 11_{0} 15_{1}  [18] & \infty_{12} 12_{0} 19_{1}  [2] & \infty_{13} 0_{0} 8_{1}  [6] 
\end{array}
\] \end{example}

%\subsection{\boldmath $n=19$}

\begin{example} For $n=19$: 
\[
\begin{array}{llllll}
0_{0} 1_{0} 5_{0} [0] & 16_{1} 3_{1} 7_{0} [1] & 18_{0} 16_{0} 8_{1} [16] & 6_{1} 7_{1} 9_{1} [12] & \infty_0 20_{0} 20_{1} [3] & \infty_1 12_{0} 14_{1} [2] \\
\infty_2 17_{0} 4_{1} [10] & \infty_3 9_{0} 2_{1} [6] & \infty_4 10_{0} 11_{1} [9] & \infty_5 19_{0} 17_{1} [5] & \infty_6 14_{0} 0_{1} [11] & \infty_7 4_{0} 1_{1} [14] \\
 \infty_8 3_{0} 18_{1} [13] & \infty_{9} 2_{0} 12_{1} [15] & \infty_{10} 13_{0} 19_{1} [7] & \infty_{11} 15_{0} 10_{1} [18] & \infty_{12} 6_{0} 5_{1} [4] & \infty_{13} 11_{0} 15_{1} [19] \\ 
\infty_{14} 8_{0} 13_{1} [20] 
\end{array}
\] \end{example}

%\subsection{\boldmath $n=20$}

\begin{example} For $n=20$: 
\[
\begin{array}{@{}*{6}l@{}}
1_{1} 2_{1} 9_{0}  [15] & 16_{1} 5_{0} 10_{0}  [21] & 0_{0} 20_{0} 4_{0}  [3] & 7_{1} 17_{1} 21_{1}  [5] & \infty_0 15_{0} 18_{1}  [1] & \infty_1 18_{0} 15_{1}  [18] \\
\infty_2 11_{0} 5_{1}  [4] & \infty_3 2_{0} 9_{1}  [9] & \infty_4 13_{0} 4_{1}  [19] & \infty_5 1_{0} 10_{1}  [20] & \infty_6 16_{0} 20_{1}  [12] & \infty_7 14_{0} 13_{1}  [16] \\
\infty_8 7_{0} 3_{1}  [10] & \infty_9 12_{0} 0_{1}  [6] & \infty_{10} 6_{0} 14_{1}  [7] & \infty_{11} 3_{0} 8_{1}  [17] & \infty_{12} 17_{0} 12_{1}  [2] & \infty_{13} 8_{0} 6_{1}  [14] \\
\infty_{14} 19_{0} 19_{1}  [8] & \infty_{15} 21_{0} 11_{1}  [13] & 
\end{array}
\] \end{example}

%\subsection{\boldmath $n=21$}

 \begin{example} For $n=21$: 
\[ 
\begin{array}{llllll} 
22_{0} 5_{0} 7_{0} [13] & 21_{0} 14_{0} 3_{1} [5] & 20_{1}  8_{1} 13_{0} [15] & 12_{1}  4_{1}  5_{1} [2] & \infty_0 11_{0}  9_{1} [0] & \infty_1 15_{0} 17_{1} [1] \\
 \infty_2 0_{0} 13_{1} [4] & \infty_3 12_{0}  0_{1} [11] & \infty_4 3_{0} 22_{1} [3] & \infty_5 17_{0}  2_{1} [8] & \infty_6 6_{0} 15_{1} [9] & \infty_7 8_{0} 11_{1} [16] \\
\infty_8 19_{0} 18_{1} [21] & \infty_{9} 2_{0} 19_{1} [7] & \infty_{10} 4_{0} 14_{1} [6] & \infty_{11} 10_{0} 10_{1} [12] & \infty_{12} 1_{0} 16_{1} [20] & \infty_{13} 16_{0}  7_{1} [14] \\
\infty_{14} 20_{0} 21_{1} [17] & \infty_{15} 9_{0}  6_{1} [22] & \infty_{16} 18_{0}  1_{1} [18] 
\end{array} 
\] \end{example}

%\subsection{\boldmath $n=22$}

\begin{example} For $n=22$: 
\[
\begin{array}{@{}*{6}l@{}}
13_{1} 9_{1} 20_{0}  [3] & 10_{0} 23_{0} 7_{1}  [15] & 17_{1} 22_{1} 19_{1}  [9] & 19_{0} 0_{0} 1_{0}  [21] & \infty_0 3_{0} 18_{1}  [1] & \infty_1 4_{0} 6_{1}  [2] \\
\infty_2 11_{0} 10_{1}  [7] & \infty_3 14_{0} 4_{1}  [5] & \infty_4 13_{0} 5_{1}  [20] & \infty_5 17_{0} 20_{1}  [14] & \infty_6 12_{0} 12_{1}  [17] & \infty_7 15_{0} 16_{1}  [11] \\
\infty_8 22_{0} 2_{1}  [22] & \infty_9 9_{0} 21_{1}  [23] & \infty_{10} 18_{0} 3_{1}  [18] & \infty_{11} 8_{0} 14_{1}  [16] & \infty_{12} 21_{0} 8_{1}  [6] & \infty_{13} 16_{0} 23_{1}  [19] \\
\infty_{14} 6_{0} 11_{1}  [4] & \infty_{15} 7_{0} 1_{1}  [10] & \infty_{16} 2_{0} 0_{1}  [13] & \infty_{17} 5_{0} 15_{1}  [8] & 
\end{array}
\] \end{example}

%\subsection{\boldmath $n=23$}

\begin{example} For $n=23$: 
\[
\begin{array}{@{}*{6}l@{}}
12_{0} 8_{0} 21_{1}  [1] & 19_{1} 17_{0} 20_{0}  [6] & 14_{1} 13_{0} 7_{0}  [14] & 2_{1} 11_{1} 16_{1}  [7] & \infty_0 18_{0} 5_{1}  [0] & \infty_1 6_{0} 12_{1}  [2] \\
\infty_2 3_{0} 22_{1}  [9] & \infty_3 19_{0} 17_{1}  [10] & \infty_4 5_{0} 20_{1}  [17] & \infty_5 22_{0} 13_{1}  [19] & \infty_6 24_{0} 7_{1}  [8] & \infty_7 1_{0} 18_{1}  [24] \\
\infty_8 15_{0} 1_{1}  [15] & \infty_9 0_{0} 10_{1}  [3] & \infty_{10} 21_{0} 24_{1}  [21] & \infty_{11} 10_{0} 15_{1}  [4] & \infty_{12} 9_{0} 4_{1}  [22] & \infty_{13} 11_{0} 8_{1}  [13] \\
\infty_{14} 14_{0} 3_{1}  [5] & \infty_{15} 23_{0} 23_{1}  [12] & \infty_{16} 2_{0} 6_{1}  [18] & \infty_{17} 16_{0} 9_{1}  [20] & \infty_{18} 4_{0} 0_{1}  [11] & 
\end{array}
\] \end{example}

%\subsection{\boldmath $n=24$}

\begin{example} For $n=24$: 
\[
\begin{array}{@{}*{6}l@{}}
4_{0} 24_{0} 23_{0}  [12] & 14_{0} 0_{0} 5_{0}  [14] & 4_{1} 15_{1} 16_{1}  [19] & 8_{1} 12_{1} 17_{1}  [7] & \infty_0 21_{0} 13_{1}  [1] & \infty_1 1_{0} 10_{1}  [2] \\
\infty_2 15_{0} 25_{1}  [5] & \infty_3 13_{0} 21_{1}  [18] & \infty_4 16_{0} 20_{1}  [11] & \infty_5 25_{0} 0_{1} [22] & \infty_6 11_{0} 14_{1}  [23] & \infty_7 12_{0} 19_{1}  [6] \\
\infty_8 20_{0} 6_{1}  [10] & \infty_9 6_{0} 11_{1}  [17] & \infty_{10} 17_{0} 23_{1}  [21] & \infty_{11} 22_{0} 24_{1}  [3] & \infty_{12} 3_{0} 3_{1}  [4] & \infty_{13} 2_{0} 1_{1}  [9] \\
\infty_{14} 8_{0} 22_{1}  [24] & \infty_{15} 7_{0} 18_{1}  [8] & \infty_{16} 18_{0} 7_{1}  [25] & \infty_{17} 10_{0} 5_{1}  [16] & \infty_{18} 9_{0} 2_{1}  [15] & \infty_{19} 19_{0} 9_{1}  [20] 
\end{array}
\] \end{example}

%\subsection {\boldmath $n=25$}

\begin{example} For $n=25$: 
\[
\begin{array}{@{}*{6}l@{}}
15_{1} 3_{1} 5_{1}  [19] & 7_{0} 1_{0} 11_{0}  [9] & 16_{1} 21_{1} 25_{1}  [25] & 14_{0} 9_{0} 16_{0}  [8] & \infty_0 13_{0} 11_{1}  [0] & \infty_1 2_{0} 17_{1}  [1] \\
\infty_2 12_{0} 7_{1}  [2] & \infty_3 5_{0} 23_{1}  [3] & \infty_4 21_{0} 22_{1}  [5] & \infty_5 0_{0} 4_{1}  [6] & \infty_6 25_{0} 9_{1}  [23] & \infty_7 18_{0} 26_{1}  [18] \\
\infty_8 6_{0} 13_{1}  [26] & \infty_9 20_{0} 19_{1}  [14] & \infty_{10} 23_{0} 8_{1}  [22] & \infty_{11} 24_{0} 18_{1}  [7] & \infty_{12} 26_{0} 1_{1}  [12] & \infty_{13} 3_{0} 0_{1}  [16] \\
\infty_{14} 22_{0} 12_{1}  [17] & \infty_{15} 17_{0} 10_{1}  [10] & \infty_{16} 8_{0} 14_{1}  [21] & \infty_{17} 10_{0} 6_{1}  [15] & \infty_{18} 15_{0} 2_{1}  [13] & \infty_{19} 4_{0} 20_{1}  [11] \\
\infty_{20} 19_{0} 24_{1}  [4] & 
\end{array}
\] \end{example}

%\subsection {\boldmath $n=26$}

\begin{example} For $n=26$: 
\[
\begin{array}{@{}*{6}l@{}}
23_{0} 23_{1} 1_{1}  [11] & 8_{0} 9_{0} 20_{0}  [5] & 11_{0} 3_{0} 7_{1}  [15] & 16_{1} 5_{1} 6_{1}  [25] & \infty_0 1_{0} 8_{1}  [1] & \infty_1 2_{0} 21_{1} [2] \\
\infty_2 13_{0} 2_{1} [3] & \infty_3 0_{0} 10_{1}  [9] & \infty_4 27_{0} 11_{1}  [4] & \infty_5 25_{0} 15_{1}  [20] & \infty_6 24_{0} 17_{1}  [12] & \infty_7 12_{0} 26_{1}  [27] \\
\infty_8 16_{0} 18_{1}  [24] & \infty_9 7_{0} 22_{1}  [22] & \infty_{10} 19_{0} 24_{1}  [8] & \infty_{11} 5_{0} 14_{1}  [10] & \infty_{12} 15_{0} 3_{1}  [7] & \infty_{13} 17_{0} 0_{1}  [21] \\
\infty_{14} 26_{0} 25_{1}  [23] & \infty_{15} 10_{0} 4_{1}  [13] & \infty_{16} 14_{0} 27_{1} [19] & \infty_{17} 6_{0} 9_{1}  [18] & \infty_{18} 4_{0} 12_{1}  [16] & \infty_{19} 18_{0} 19_{1}  [17] \\
\infty_{20} 21_{0} 13_{1}  [26] & \infty_{21} 22_{0} 20_{1}  [6] &
\end{array}
\] \end{example}

%\subsection {\boldmath $n=27$}

\begin{example} For $n=27$: 
\[
\begin{array}{@{}*{6}l@{}}
4_{0} 8_{0} 22_{0}  [17] & 11_{0} 19_{0} 22_{1}  [11] & 5_{1} 18_{1} 6_{0}  [14] & 2_{1} 10_{1} 4_{1}  [16] & \infty_0 24_{0} 16_{1}  [0] & \infty_1 14_{0} 27_{1}  [1] \\
\infty_2 25_{0} 14_{1}  [3] & \infty_3 9_{0} 26_{1}  [4] & \infty_4 27_{0} 8_{1}  [2] & \infty_5 10_{0} 19_{1}  [6] & \infty_6 26_{0} 20_{1}  [7] & \infty_7 0_{0} 24_{1}  [27] \\
\infty_8 16_{0} 21_{1}  [21] & \infty_9 18_{0} 9_{1}  [5] & \infty_{10} 20_{0} 17_{1}  [12] & \infty_{11} 23_{0} 25_{1}  [13] & \infty_{12} 12_{0} 13_{1}  [22] & \infty_{13} 2_{0} 0_{1}  [15] \\
\infty_{14} 3_{0} 28_{1}  [8] & \infty_{15} 13_{0} 6_{1}  [28] & \infty_{16} 1_{0} 15_{1}  [25] & \infty_{17} 15_{0} 1_{1}  [23] & \infty_{18} 17_{0} 23_{1}  [18] & \infty_{19} 7_{0} 7_{1}  [24] \\
\infty_{20} 28_{0} 3_{1}  [20] & \infty_{21} 21_{0} 11_{1}  [26] & \infty_{22} 5_{0} 12_{1}  [9] & 
\end{array}
\] \end{example}

%\subsection {\boldmath $n=28$}

\begin{example} For $n=28$: 
\[
\begin{array}{@{}*{6}l@{}}
22_{0} 3_{0} 6_{0}  [26] & 27_{0} 2_{0} 28_{0}  [24] & 10_{1} 18_{1} 13_{1}  [23] & 8_{1} 14_{1} 27_{1}  [17] & \infty_0 12_{0} 25_{1}  [1] & \infty_1 10_{0} 22_{1}  [2] \\
\infty_2 16_{0} 2_{1}  [3] & \infty_3 1_{0} 19_{1}  [4] & \infty_4 5_{0} 5_{1}  [5] & \infty_5 15_{0} 24_{1}  [8] & \infty_6 24_{0} 29_{1}  [13] & \infty_7 4_{0} 23_{1}  [29] \\
\infty_8 25_{0} 12_{1}  [9] & \infty_9 7_{0} 17_{1}  [20] & \infty_{10} 14_{0} 6_{1}  [14] & \infty_{11} 26_{0} 28_{1} [18] & \infty_{12} 13_{0} 16_{1}  [12] & \infty_{13} 0_{0} 7_{1}  [11] \\
\infty_{14} 18_{0} 11_{1}  [28] & \infty_{15} 29_{0} 26_{1}  [21] & \infty_{16} 20_{0} 4_{1}  [25] & \infty_{17} 21_{0} 20_{1}  [10] & \infty_{18} 17_{0} 21_{1}  [22] & \infty_{19} 9_{0} 0_{1}  [27] \\
\infty_{20} 23_{0} 1_{1}  [7] & \infty_{21} 8_{0} 3_{1}  [16] & \infty_{22} 19_{0} 15_{1}  [19] & \infty_{23} 11_{0} 9_{1}  [6] &
\end{array}
\] \end{example}

%\subsection{\boldmath $n=29$}

\begin{example} For $n=29$: 
\[
\begin{array}{@{}*{6}l@{}}
12_{0} 17_{0} 13_{1}  [29] & 9_{0} 10_{0} 0_{0}  [13] & 7_{1} 23_{1} 25_{1}  [7] & 27_{1} 30_{1} 2_{0}  [16] & \infty_{0} 27_{0} 19_{1}  [0] & \infty_{1} 20_{0} 2_{1}  [1] \\
\infty_{2} 15_{0} 22_{1}  [2] & \infty_{3} 26_{0} 5_{1}  [3] & \infty_{4} 3_{0} 18_{1}  [4] & \infty_{5} 28_{0} 15_{1}  [5] & \infty_{6} 22_{0} 26_{1}  [9] & \infty_{7} 21_{0} 29_{1}  [11] \\
\infty_{8} 14_{0} 28_{1}  [10] & \infty_{9} 18_{0} 8_{1}  [17] & \infty_{10} 6_{0} 17_{1}  [30] & \infty_{11} 16_{0} 14_{1}  [14] & \infty_{12} 24_{0} 12_{1}  [19] & \infty_{13} 29_{0} 20_{1}  [21] \\
\infty_{14} 19_{0} 0_{1}  [18] & \infty_{15} 8_{0} 24_{1}  [12] & \infty_{16} 25_{0} 3_{1}  [20] & \infty_{17} 4_{0} 4_{1}  [22] & \infty_{18} 7_{0} 10_{1}  [27] & \infty_{19} 30_{0} 1_{1}  [26] \\
\infty_{20} 23_{0} 9_{1}  [24] & \infty_{21} 11_{0} 16_{1}  [28] & \infty_{22} 5_{0} 11_{1}  [6] & \infty_{23} 1_
{0} 21_{1}  [8] & \infty_{24} 13_{0} 6_{1}  [15] & 
\end{array}
\] \end{example}

%\subsection{\boldmath $n=31$}

\begin{example} For $n=31$: 
\[
\begin{array}{@{}*{6}l@{}}
11_{0} 19_{0} 13_{0}  [28] & 22_{1} 17_{1} 24_{1}  [26] & 4_{0} 24_{0} 27_{0}  [9] & 28_{1} 19_{1} 31_{1}  [11] & \infty_{0} 1_{0} 5_{1}  [0] & \infty_{1} 23_{0} 30_{1}  [1] \\
\infty_{2} 25_{0} 26_{1}  [2] & \infty_{3} 17_{0} 10_{1}  [3] & \infty_{4} 31_{0} 8_{1}  [4] & \infty_{5} 5_{0} 18_{1}  [5] & \infty_{6} 6_{0} 20_{1}  [6] & \infty_{7} 12_{0} 3_{1}  [13] \\
\infty_{8} 28_{0} 13_{1}  [31] & \infty_{9} 8_{0} 25_{1}  [7] & \infty_{10} 14_{0} 11_{1}  [16] & \infty_{11} 3_{0} 12_{1}  [8] & \infty_{12} 20_{0} 23_{1}  [17] & \infty_{13} 9_{0} 14_{1}  [23] \\
\infty_{14} 26_{0} 21_{1}  [12] & \infty_{15} 21_{0} 0_{1}  [29] & \infty_{16} 16_{0} 32_{1}  [15] & \infty_{17} 22_{0} 4_{1}  [30] & \infty_{18} 15_{0} 7_{1}  [14] & \infty_{19} 0_{0} 6_{1}  [18] \\
\infty_{20} 10_{0} 9_{1}  [32] & \infty_{21} 29_{0} 27_{1}  [25] & \infty_{22} 32_{0} 1_{1}  [24] & \infty_{23} 30_{0} 16_{1}  [19] & \infty_{24} 18_{0} 29_{1}  [22] & \infty_{25} 7_{0} 15_{1}  [21] \\
\infty_{26} 2_{0} 2_{1}  [20] & 
\end{array}
\] \end{example}

\begin{example} For $n=32$: 
\[
\begin{array}{@{}*{6}l@{}}
0_0  3_0  11_0  [31] & 12_0 13_0 27_0  [3] & 23_1 24_1 29_1 [7] & 15_1 26_1  5_1  [27] & \infty_0 17_0 17_1 [9] & \infty_1 15_0 16_1 [12] \\
\infty_2 26_0 28_1  [26] & \infty_3 1_0  4_1  [1] & \infty_4 2_0  6_1 [4] & \infty_5 4_0  9_1 [5] & \infty_6 5_0 11_1 [6] & \infty_7 6_0 13_1 [11] \\
\infty_8 10_0 18_1 [22] & \infty_9 16_0 25_1 [13] & \infty_{10} 9_0 19_1 [30] & \infty_{11} 19_0 30_1  [25] & \infty_{12} 20_0 32_1  [2] & \infty_{13} 18_0 31_1  [15] \\
\infty_{14} 21_0  1_1  [32] & \infty_{15} 22_0  3_1  [24] & \infty_{16} 28_0 10_1 [19] & \infty_{17} 25_0  8_1  [10] & \infty_{18} 23_0  7_1  [18] & \infty_{19} 29_0 14_1  [29] \\
\infty_{20} 14_0  0_1  [23] & \infty_{21} 33_0 20_1 [21] & \infty_{22} 24_0 12_1   [33] & \infty_{23} 32_0 21_1    [16] & \infty_{24} 31_0 22_1    [28] & \infty_{25} 7_0  33_1    [14] \\
\infty_{26} 8_0  2_1      [20] & \infty_{27} 30_0 27_1      [8]
\end{array}
\]
\end{example}

\begin{example} For $n=33$:
\[
\begin{array}{@{}*{6}l@{}}
0_0 11_0 32_0          [0]  &  6_0 23_0 25_0        [ 8]  & 16_1 17_1 21_1        [11]  &  7_1 27_1 33_1         [32] & \infty_0 17_0 18_1    [22]  & \infty_1 26_0 28_1      [14] \\
\infty_2 16_0 19_1     [26] & \infty_3  1_0  5_1    [ 1] & \infty_4  3_0   8_1    [ 3]  & \infty_5  4_0 10_1     [ 4] & \infty_6  2_0  9_1    [13]  & \infty_7  5_0 13_1      [21] \\
\infty_8 14_0 23_1     [10] & \infty_9 10_0 20_1    [15] & \infty_{10} 13_0 24_1  [34]  & \infty_{11} 18_0 31_1  [ 5] & \infty_{12} 20_0 34_1 [ 9]  & \infty_{13} 21_0  1_1   [24] \\
\infty_{14} 31_0 12_1  [17] & \infty_{15} 22_0  4_1 [33] & \infty_{16} 28_0 11_1  [ 2]  & \infty_{17} 30_0 14_1  [ 6] & \infty_{18} 15_0  0_1 [19]  & \infty_{19} 29_0 15_1   [23] \\
\infty_{20} 19_0  6_1  [25] & \infty_{21}  7_0 30_1 [31] & \infty_{22}  8_0 32_1  [20]  & \infty_{23} 12_0  2_1  [ 7] & \infty_{24} 34_0 25_1 [28]  & \infty_{25}  9_0  3_1   [12] \\
\infty_{26} 33_0 29_1  [18] & \infty_{27} 24_0 22_1 [29] & \infty_{28} 27_0 26_1  [30] 

\end{array}
\]
 \end{example}

\begin{example} For $n=34$:
\[
\begin{array}{@{}*{6}l@{}}
0_0  1_0  11_0     [35] & 15_0 17_0 32_0      [1] & 1_1 23_1 26_1     [11] & 10_1 14_1 27_1     [25] & \infty_0 18_0 18_1          [20] & \infty_1 12_0 13_1          [12] \\
\infty_2 26_0 28_1          [27] & \infty_3 2_0  5_1             [2] & \infty_4 3_0  7_1             [3] & \infty_5 4_0  9_1             [5] & \infty_6 5_0 11_1            [9] & \infty_7 8_0 15_1          [17] \\
\infty_8 9_0 17_1       [14] & \infty_9 7_0 16_1        [24] & \infty_{10} 10_0 20_1     [16] & \infty_{11} 19_0 30_1     [32] & \infty_{12} 20_0 32_1     [28] & \infty_{13} 21_0 34_1       [7] \\ 
\infty_{14} 22_0  0_1      [15] & \infty_{15} 25_0  4_1      [19] & \infty_{16} 23_0  3_1       [6] & \infty_{17} 27_0  8_1      [22] & \infty_{18} 24_0  6_1      [23] & 
\infty_{19} 29_0 12_1   [10] \\ 
\infty_{20} 13_0 33_1   [21] & \infty_{21} 14_0 35_1   [29] & \infty_{22} 16_0  2_1      [4] & \infty_{23} 35_0 22_1 [31] & \infty_{24} 6_0 31_1     [13] & 
\infty_{25} 31_0 21_1    [26] \\
\infty_{26} 28_0 19_1    [30] & \infty_{27} 33_0 25_1     [8] & \infty_{28} 30_0 24_1   [33] & \infty_{29} 34_0 29_1   [34]
\end{array}
\]
\end{example}

\begin{example} For $n=39$:
\[
\begin{array}{@{}*{6}l@{}}
0_0 11_0 18_0          [0]  & 10_0 22_0 36_0        [17]  &  2_1 27_1 33_1        [ 4]  & 13_1 39_1 40_1         [13] & \infty_0 19_0 19_1    [ 2]  & \infty_1 12_0 14_1      [30] \\
\infty_2 20_0 23_1     [18] & \infty_3 16_0 21_1    [28] & \infty_4  2_0   8_1    [35]  & \infty_5  3_0 10_1     [20] & \infty_6  1_0  9_1    [31]  & \infty_7  6_0 15_1      [40] \\
\infty_8 14_0 24_1     [15] & \infty_9  4_0 16_1    [ 6] & \infty_{10} 25_0 38_1  [10]  & \infty_{11} 32_0  5_1  [11] & \infty_{12} 29_0  3_1 [25]  & \infty_{13} 31_0  6_1   [29] \\
\infty_{14} 35_0 11_1  [22] & \infty_{15} 24_0  1_1 [26] & \infty_{16} 26_0  4_1  [21]  & \infty_{17}  7_0 28_1  [23] & \infty_{18} 13_0 35_1 [ 7]  & \infty_{19} 40_0 22_1   [37] \\
\infty_{20}  8_0 32_1  [14] & \infty_{21}  9_0 34_1 [36] & \infty_{22} 15_0  0_1  [19]  & \infty_{23} 21_0  7_1  [27] & \infty_{24} 33_0 20_1 [16]  & \infty_{25} 37_0 26_1   [32] \\
\infty_{26}  5_0 36_1  [ 9] & \infty_{27} 27_0 18_1 [38] & \infty_{28} 38_0 31_1  [34] & \infty_{29} 23_0 17_1   [33] & \infty_{30} 17_0 12_1 [ 8] & \infty_{31} 34_0 30_1    [24] \\
 \infty_{32} 28_0 25_1 [39] & \infty_{33} 39_0 37_1 [ 1] & \infty_{34} 30_0 29_1  [ 3] 
\end{array}
\]

\end{example}

\begin{example} For $n=44$:
\[
\begin{array}{@{}*{6}l@{}}
0_0  2_0  7_0       [44] & 21_0 24_0 33_0      [2] & 0_1 11_1 17_1      [6] & 10_1 14_1 19_1     [40] & \infty_0 32_0 32_1          [28] & \infty_1 26_0 27_1          [29] \\
\infty_2 18_0 20_1          [45] & \infty_3 1_0  4_1            [18] & \infty_4 3_0  7_1            [36] & \infty_5 4_0  9_1            [16] & \infty_6 6_0 12_1             [9] & \infty_7 8_0 15_1             [5] \\
\infty_8 10_0 18_1          [30] & \infty_9 34_0 43_1          [37] & \infty_{10} 35_0 45_1          [32] & \infty_{11} 13_0 24_1     [15] & \infty_{12} 29_0 41_1     [41] & \infty_{13} 36_0  3_1     [43] \\
\infty_{14} 38_0  6_1     [38] & \infty_{15} 39_0  8_1      [3] & \infty_{16} 31_0  1_1     [31] & \infty_{17} 17_0 34_1      [1] & \infty_{18} 19_0 37_1     [33] & \infty_{19} 20_0 39_1     [25] \\
\infty_{20} 22_0 42_1     [19] & \infty_{21} 23_0 44_1     [14] & \infty_{22} 14_0 36_1     [13] & \infty_{23} 5_0 28_1     [17] & \infty_{24} 9_0 33_1     [20] & \infty_{25} 15_0 40_1    [34] \\
\infty_{26} 45_0 25_1     [12] & \infty_{27} 40_0 21_1       [8] & \infty_{28} 41_0 23_1     [39] & \infty_{29} 44_0 31_1    [10] & \infty_{30} 12_0  2_1       [24] & \infty_{31} 25_0 16_1    [22] \\
 \infty_{32} 43_0 35_1      [7] & \infty_{33} 37_0 30_1    [21] & \infty_{34} 11_0  5_1     [42] & \infty_{35} 27_0 22_1    [11] & \infty_{36} 42_0 38_1    [35] & \infty_{37} 16_0 13_1    [27] \\
\infty_{38} 28_0 26_1      [4] & \infty_{39} 30_0 29_1    [26]
\end{array}
\]

\end{example}

Now the intransitive starters and adders:

\begin{example} For $n= 7$: 
\[
\begin{array}{@{}*{7}l@{}}
 2_{2} 3_{2} 5_{2} [0] & 0_{0} 1_{0}  3_{0} [6] & 6_{1} 0_{1} 2_{1} [1] & 6_{0}  3_{1} 4_{2} [ 2] & 5_0 1_{1} 6_{2} [5] & 4_{0} 5_1  0_{2} [R] &  5_{0} 4_{1} 0_{2} [C] \\
 2_{0}  4_{1} 1_{2} [ R] & 4_0 2_{1} 1_{2} [C]  \\
\end{array}
\] \end{example}

\begin{example} For $n= 9$: 
\[
\begin{array}{@{}*{7}l@{}}
 0_{2} 3_{2} 5_{2} [0] & 6_{0} 4_{0}  8_{2} [5] & 2_{1} 0_{1} 4_{2} [4] & 1_{0}  0_{0} 7_{2} [3] & 4_1 3_{1} 1_{2} [6] & 8_{0} 3_0  7_{1} [7] &  5_{0} 6_{1} 1_{1} [2] \\
 2_{0} 8_{1} 2_{2} [R] & 8_{0} 2_{1} 2_{2} [C] & 7_{0}  5_{1} 6_{2} [R] & 5_0 7_{1} 6_{2} [C]  \\
\end{array}
\] \end{example}

\begin{example} For $n=11$: 
\[
\begin{array}{@{}*{7}l@{}}
 0_{2} 1_{2} 9_{2} [0] & 4_{0} 8_{0}  7_{0} [5] & 9_{1} 2_{1} 1_{1} [6] & 5_{0}  4_{1} 7_{2} [ 9] & 2_0 3_{1} 5_{2} [2] & 1_{0} 6_0  6_{2} [4] & 5_{1} 10_{1} 10_{2} [7] \\
 9_{0} 0_{1} 4_{2} [10] & 10_{0} 8_{1} 3_{2} [1] & 0_{0} 7_{1} 8_{2} [R] & 7_0 0_{1} 8_{2} [C] & 3_{0} 6_1 2_{2} [R] &  6_0 3_{1} 2_{2} [C]  \\
\end{array}
\] \end{example}

\end{appendices}

\end{document}